\numberwithin{equation}{section}
\newcommand{\singlespaced}{\renewcommand{\baselinestretch}{1}\normalfont}
\newtheorem{theorem}{Theorem}[section]
\newtheorem{corollary}[theorem]{Corollary}
\newtheorem{lemma}[theorem]{Lemma}
\newtheorem{proposition}[theorem]{Proposition}
\theoremstyle{definition}
\newtheorem{definition}[theorem]{Definition}
\theoremstyle{remark}
\newtheorem{remark}[theorem]{Remark}
\numberwithin{equation}{section}
\newcommand{\fnamesize}{\footnotesize}
\newcommand{\fname}[1]{\mbox{\fnamesize #1}}
\newcommand{\mc}{\mathcal}
\newcommand{\Pic}{\textrm{Pic}}
\newcommand{\rk}[1]{\mbox{rk}(#1)}
\newcommand{\isom}{\cong}
\newcommand{\intersect}{\cap}
\newcommand{\poincare}{\mathcal{P}}
\newcommand{\KQ}{\mathcal{Q}}
\newcommand{\slope}{\mu}
\newcommand{\OO}{\mathcal{O}}
\newcommand{\tensor}{\otimes}
\newcommand{\dsum}{\oplus}
\newcommand{\Dsum}{\bigoplus}
\newcommand{\union}{\bigcup}
\newcommand{\infinity}{\infty}
\newcommand{\id}{id}
\newcommand{\proj}[1]{\mathbb{P}^#1}
\newcommand{\Heis}{\mathbb{H}_8}
\newcommand{\Z}{\mathbb{Z}}
\newcommand{\C}{\mathbb{C}}
\newcommand{\Q}{\mathbb{Q}}
\newcommand{\dual}[1]{\widehat{#1}}
\newcommand{\sdual}[1]{\hat{#1}}
\newcommand{\dV}{\dual{V}}
\newcommand{\dH}{\dual{H}}
\newcommand{\dA}{\dual{A}}
\newcommand{\dc}{\sdual{c}}
\newcommand{\dl}{\sdual{l}}
\newcommand{\de}{\sdual{e}}
\newcommand{\dE}{\sdual{E}}
\newcommand{\rest}{\arrowvert}
\newcommand{\res}[1]{\widetilde{#1}}
\newcommand{\FM}{\text{S}}
\newcommand{\fm}{\text{s}}
\newcommand{\fund}[1]{\pi_1(#1)}
\newcommand{\ses}[3]{0 \longrightarrow #1 \longrightarrow #2 \longrightarrow #3 \longrightarrow 0}
\newcommand{\euler}{\chi}
\begin{document}

\singlespaced
\pagenumbering{arabic}

\begin{titlepage}

{\noindent \Large \bf \fontfamily{pag}\selectfont The Spectral Construction for a (1,8)-Polarized Family of Abelian Varieties}

\vspace*{0.5cm}
\noindent \rule{\linewidth}{0.5mm}

\vspace*{1.3cm}

{\noindent  \fontfamily{pag}\selectfont  Anthony Bak}\\[0.2em]
{\small {\it \indent Department of Mathematics and Statistics\\
\indent Mount Holyoke College\\
\indent South Hadley, MA 01075, USA}\\[0.1em]
\indent \url{abak@mtholyoke.edu}}\\[0.3em]

\vspace*{1.5cm}

\hspace*{1cm}
\parbox{11.5cm}{{\sc Abstract:} We extend the Spectral Construction, a technique used with great success to study and construct vector bundles on elliptically fibered varieties, to a special family of abelian surface fibered varieties. The results are motivated by requirements from Heterotic String Phenomenology where vector bundles with specified chern class are required to produce a realistic particle spectrum. Although only certain (1,8)-polarized families of abelian surfaces are considered here we expect the main ideas to carry over to other families of abelian varieties with a uniform relative polarization type.}

\end{titlepage}

\setstretch{1.08}
\tableofcontents 

%chapter
\section{Introduction}

Two decades ago it was shown that a physically realistic string theory can be realized with a Calabi-Yau threefold as the compact portion of space-time \cite{GreenSchwarzWitten}. On this Calabi-Yau a Heterotic String theory compactification is specified by giving a vector bundle with special properties determined by physics. In the last decade, many examples of Calabi-Yau manifolds have been produced, but until recently \cite{BouchardDonagi} there were no examples of a Heterotic String with a phenomenologically correct particle spectrum.  

This paper examines a particular family of Calabi-Yaus and develops the theory of Fourier-Mukai transforms and the Spectral Construction to build vector bundles with specified chern characters.  The Calabi-Yaus we examine are of intrinsic mathematical interest because of some extremizing properties, and also form promising candidates for physical reasons.

In the companion paper \cite{BakBouchardDonagi} we explore the physical consequences of using the constructions described here.  Unfortunately the main result is that under some mild restrictions we have a ``no-go'' result stating the the weak heterotic challenge can not be satisfied (see section \ref{PhysicalMotivations}).

\subsection{Main Example}

The family of Calabi-Yaus considered here arise from explicit constructions of the moduli of $(1,8)$-polarized abelian varieties with level structure, originally considered in some detail in \cite{GrossPopescu}. Each Calabi-Yau in the family is an abelian surface fibration $V \rightarrow \proj{1}$ with a relative (1,8) polarization class, H. We will use quotients of $V$ by a freely acting group $G$ as our target variety $\dual{V}$ on which we examine the moduli of vector bundles. In both cases the fibration is flat but not smooth and the types of singular fibers we encounter are generic in boundary components of compactification of the moduli of $(1,8)$-polarized abelian surfaces.

Both $V$ and $\dual{V}$ have a number of appealing characteristics that make them interesting to both Mathematicians and Physicists. The variety $\dual{V}$ has a fundamental group of order $64$. Most of the known Calabi-Yau varieties are simply connected and those considered here have some of the largest known fundamental groups. Recently $V$ was shown \cite{GrossPavanelli} to have the largest known Brauer Group of a Calabi-Yau threefold and it has been conjectured that $V$ is the universal cover of its mirror \cite{Pavanelli}. 

For physics, large fundamental groups give more flexibility when breaking the grand unified group to the standard model group. This could represent a significant improvement on previous attempts \cite{BouchardDonagi}. There are families of Calabi-Yaus closely related to those considered here to which many of the methods used in this paper apply. Some of those examples are known to have non abelian fundamental groups and are discussed briefly in concluding remarks.

\subsection{Physical Motivations: The Weak Heterotic Challenge}\label{PhysicalMotivations}

The exploration of these families of Calabi-Yaus is motivated by problems in the compactification of a Heterotic String theory to four dimensions where bundles with particular characteristics are required. For an $E_8 \times E_8$ Heterotic theory compactified to a four dimensional theory we need to satisfy the weak heterotic challenge \cite{DonagiOvrutPantevWaldram,BakBouchardDonagi}:
\begin{enumerate}
	\item[$\bullet$] a Calabi-Yau threefold X with K\"{a}hler form $\omega$
	\item[$\bullet$] An $E_8$ bundle $\mathcal{E} \longrightarrow X$ whose structure group reduces to $G$
	\item[$\bullet$] The centralizer of $G$ in $E_8$ contains the standard model group $SU(3)\times SU(2) \times U(1)$ as a direct summand. 
\end{enumerate}
\noindent In practice we will have $G = SU(4)$ or $SU(5)$ and we'll replace the principal bundle $\mathcal{E}$ by the associated vector bundle which we'll continue to refer to as $\mathcal{E}$.  We have:
\begin{enumerate}
  	\item[$\bullet$] $rk(\mathcal{E}) = 4$ or $5$.
	\item[$\bullet$]  $\mathcal{E}$ should be Mumford polystable with respect to $\omega$ (Supersymmetry Preservation)
	\item[$\bullet$] $c_1(\mathcal{E}) = 0$
	\item[$\bullet$]  $c_2(T_X) - c_2(\mathcal{E}) =  \text{\;\{class of effective curve\}}$ (Anomaly Cancellation)
	\item[$\bullet$] $c_3(\mathcal{E}) = 6$ (3 Generations Condition) 
\end{enumerate}
These numerical requirements are quite rigid, and the search for bundles satisfying the constraints typically requires detailed analysis of the particular threefold in question using the techniques described above.

\subsection{Outline of the Spectral Construction for Vector Bundles}

Using the above constraints we can calculate possible Chern characters of a desired bundle $\mathcal{E}$. Then using our explicit calculation of the action of the Fourier-Mukai transform on cohomology we can calculate the chern characters of a sheaf $\mathcal{N}$ such that the Fourier-Mukai of it gives a sheaf with the desired chern classes.

In some cases $\mathcal{N}$ can be written as a line bundle supported on a smooth curve finite over the base.  In those cases $FM(\mathcal{N})$ will be a vector bundle.  Some properties of the vector bundle are related to geometric properties of a the curve, in particular we are able to get conditions on the curve such the the resulting bundle is stable.

This is the general outline of how we would like to proceed with the Spectral Construction.  In the example consider here we are unable to make this construction or variations on it produce a bundle meeting the weak heterotic challenge and under mild hypothesis have shown that a vector bundle with the desired properties does not exist \cite{BakBouchardDonagi}.

\subsection*{Acknowledgments}

I would like to thank Ron Donagi for his patience while advising me on the project.  I would also like to thank Tony Pantev and Mark Gross for their many useful suggestions, discussions and comments without whose help this project would not have started.  This work was supported by the University of Pennsylvania and The Max Planck Institute for Mathematics.

%chapter
\section{The Geometry of $V_{8,y}$ and $\dual{V}$}
We review the known results about $V_{8,y}$ and give some new results for $\dual{V}$.  The goal of this chapter is to provide a basic geometric understanding of our main example by calculating the relevant cohomology groups and intersection products.  First we'll summarize the relevant details for our Calabi-Yau threefold $V=V^1_{8,y}$. We closely follow the notation and exposition of \cite{GrossPavanelli}.  More details can be found in \cite{GrossPopescu} and \cite{GrossPavanelli}.

\subsection{A Relative (1,8)-Polarized Family of Abelian Surfaces:  $V_{8,y}$ }

Write $V_{8,y}$ for the  $(2,2,2,2)$ complete intersection in $\proj{7} = [x_0:\ldots:x_7]$ depending on a parameter $y = [y_1,y_2,y_3] \in \proj{2}_- \hookrightarrow \proj{7}$ given by
\begin{equation}
 \begin{array}{l}
y_1y_3(x_0^2+x_4^2) - y_2^2(x_1x_7+x_3x_5)+(y_1^2+y_3^2)x_2x_6=0,\\
y_1y_3(x_1^2+x_5^2) - y_2^2(x_2x_0+x_4x_6)+(y_1^2+y_3^2)x_3x_7=0,\\
y_1y_3(x_2^2+x_6^2) - y_2^2(x_3x_1+x_5x_7)+(y_1^2+y_3^2)x_4x_0=0,\\
y_1y_3(x_3^2+x_7^2) - y_2^2(x_4x_2+x_6x_0)+(y_1^2+y_3^2)x_5x_1=0.\\
\end{array}
\end{equation}

There is an action of the Heisenberg group $\Heis$ generated by the elements $\sigma$, $\tau$ given via an action on the homogeneous coordinates given by $\sigma(x_i) = x_{i-1}$ and $\tau(x_i) = \xi^{-i}x_i$ where $\xi$ is a primitive $8^{th}$ root of unity and the indices on the coordinates should be read mod 8.

The main results summarizing our description of $V$ are the following ( \cite{GrossPavanelli} theorem 1.1 and Proposition 1.3)

\begin{theorem} Let $y \in \proj{2}_-$ be general.
\begin{enumerate}
\item $V_{8,y}$ is a complete intersection singular at 64 ordinary double points.  These 64 points are the $\Heis$ orbit of $y$.
\item There is a small resolution $V^1_{8,y} = V \longrightarrow V_{8,y}$ and a fibration $\pi : V \longrightarrow \proj{1}$ whose general fiber is an abelian surface A with a (1,8) polarization induced by its embedding in $\proj{7}$
\item $\chi (V) = 0$ and $h^{1,1}(V) = h^{1,2}(V) = 2$.
\item $H^2(V,\Z)/Tors$ is generated by the pullback of the hyperplane section, H and the class of the fiber of $\pi$, A.  $H^4(V,\Z)/Tors$ is generated be the class of an exceptional curve $e$ and the class of a line $l$ in $V_{8,y}$ disjoint from the singular locus.  Since $A.e=1$ any exceptional curve is a section of $\pi$.  The cohomology ring mod torsion is
\begin{center}
	$\begin{array}{cccc} H.e=0 & H.l=1 & A.l=0 & A.e=1 \end{array}$ \\
	$\begin{array}{cc} H^3=16 & H^2A=16 \end{array}$ \\
	$\begin{array}{ccc} H.A=16l & A^2=0 & H^2 = 16e + 16l \end{array}$
\end{center}
\end{enumerate}
\end{theorem}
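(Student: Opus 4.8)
The plan is to handle the four assertions in sequence, pinning down the geometry first and then reading off the numerical data by elementary linear algebra on the intersection form. For part (1) I would begin from the observation that the four defining quadrics are manifestly $\Heis$-invariant, so the singular locus is a $\Heis$-invariant subscheme. Applying the Jacobian criterion, I would check that the general point $y \in \proj{2}$, sitting inside $\proj{7}$ via the given embedding, lies on $V_{8,y}$ and has all four differentials vanishing there, so it is a singular point; computing the Hessian of the defining equations at $y$ and verifying it is nondegenerate (cuts out a smooth quadric in the normal $\proj{3}$) identifies the singularity as an ordinary double point. Since $\Heis$ acts by projective automorphisms preserving $V_{8,y}$, the entire orbit consists of nodes, and because the Heisenberg action has its center acting by scalars and descends to a free action of $(\Z/8)^2$ on a general orbit, the orbit has exactly $64$ points; genericity of $y$ is what rules out further singularities.

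For part (2) I would invoke the standard local model of an ordinary double point $xy=zw$, which admits two analytic small resolutions, each replacing the node by a $\proj{1}$; the real content is producing a single global, projective small resolution $V \to V_{8,y}$, and this is where the specific Gross–Popescu geometry — the factorization of the family through the moduli of $(1,8)$-polarized abelian surfaces — must enter. The fibration $\pi : V \to \proj{1}$ I would extract from the natural pencil structure on the family, and the identification of the general fiber as a $(1,8)$-polarized abelian surface I would take from the theta-embedding description of \cite{GrossPopescu}: for general $y$ the fiber is the image of an abelian surface under the map given by sections of its $(1,8)$-polarization, with the $\Heis$-action matching the Heisenberg action on theta functions.

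For parts (3) and (4) I would compute $\chi(V)$ through the fibration: the general fiber is an abelian surface with $\chi=0$, so $\chi(V)$ collapses to the sum of Euler-number contributions of the singular fibers, which I would show cancel (equivalently, compare $\chi(V)$ with that of a smooth $(2,2,2,2)$ Calabi--Yau complete intersection in $\proj{7}$, correcting by $+1$ for each resolved node). Granting $\chi=0$ and the Calabi--Yau relation $\chi = 2(h^{1,1}-h^{1,2})$, it remains to fix one Hodge number; I would argue the Picard rank is exactly $2$, generated by the pullback $H$ of the hyperplane and the fiber class $A$, giving $h^{1,1}=h^{1,2}=2$. For the ring I would take $\{e,l\}$ as a basis of $H^4/\mathrm{Tors}$, with $e$ an exceptional $\proj{1}$ and $l$ a line disjoint from the nodes; since an abelian surface carries no lines, $l$ must lie in a (necessarily singular) fiber, so $\pi(l)$ is a point, forcing $A.l=0$ while $H.l=1$. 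The remaining pairings are immediate: $H^3=16$ is the degree of the complete intersection, $H^2A = 2\cdot 1\cdot 8 = 16$ is the self-intersection of the $(1,8)$-polarization on a fiber, $A^2=0$ because fibers are disjoint, $H.e=0$ because $e$ is contracted and $H = \rho^*\OO(1)$ satisfies $\rho_* e = 0$, and $A.e=1$ from the local geometry of the resolution over a node lying on a fiber (whence every exceptional curve is a section). Writing $H^2 = \alpha e + \beta l$ and $H.A = \gamma e + \delta l$ and intersecting against $H$ and $A$ then forces $\alpha=\beta=\delta=16$ and $\gamma=0$, i.e. $H^2 = 16e+16l$ and $H.A = 16l$.

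The main obstacle, I expect, is part (2) together with the exact Hodge numbers: exhibiting a global \emph{projective} small resolution rather than a merely analytic one, and verifying that this resolution contributes nothing unexpected to cohomology, so that the Picard rank is precisely $2$ and $h^{1,2}=2$. This requires genuine input on the degeneration behavior of the family and the structure of the singular fibers — enough to locate the line $l$ and to show that the singular-fiber Euler numbers sum to zero — as opposed to the intersection-theoretic bookkeeping of parts (3)–(4), which is routine once the underlying geometry is secured.
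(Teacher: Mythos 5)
You should be aware that the paper does not prove this theorem at all: it is imported verbatim from the literature (the text introduces it as ``\cite{GrossPavanelli} theorem 1.1 and Proposition 1.3,'' with the underlying constructions in \cite{GrossPopescu}), so there is no in-paper argument to compare yours against. Judged on its own terms, your sketch is a sound reconstruction of how that result is actually established, and you correctly locate the genuine content where it lives: the existence of a global \emph{projective} small resolution, the identification of the singular fibers (the paper records separately that for general $y$ there are eight, all translation scrolls), and the verification that the Picard rank is exactly $2$ — none of which follows from soft arguments and all of which requires the Gross--Popescu analysis of the $(1,8)$ moduli construction. Your intersection-theoretic bookkeeping in part (4) is correct and is the same linear algebra anyone would do: $H^3=16$ from the degree, $H^2A=2\cdot 1\cdot 8=16$ from the polarization type, $A.l=0$ because an abelian surface contains no rational curves so $l$ sits in a translation scroll, and pairing $H^2=\alpha e+\beta l$, $H.A=\gamma e+\delta l$ against $H$ and $A$ forces the stated coefficients.

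Two small corrections. First, your Euler-characteristic bookkeeping is off by a factor: relative to the smooth $(2,2,2,2)$ complete intersection (which has $\chi=-128$), each node resolved by a $\proj{1}$ contributes $+2$, not $+1$ (passing from the smoothing to the nodal variety adds $+1$ per node since the vanishing cycle is an $S^3$, and replacing each node by a $\proj{1}$ adds another $+1$), giving $-128+2\cdot 64=0$; the fibration route is cleaner, since each translation scroll, being an identification space of a $\proj{1}$-bundle over an elliptic curve, individually has $\chi=0$. Second, a point worth making explicit in part (2): since $A.e=1$, the exceptional curves are sections of $\pi$, so the fibration does not descend from a morphism on the nodal $V_{8,y}$ — the pencil of abelian surfaces through a node is only separated after the small resolution — which is another reason the projectivity of the resolution cannot be waved away by the local $xy=zw$ model alone.
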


\begin{proposition} For a general choice of $y$, $\pi$ has eight singular fibers, and they are all translation scrolls.
\end{proposition}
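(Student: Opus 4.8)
The plan is to prove both assertions---that there are exactly eight singular fibers and that each is a translation scroll---by viewing $\pi$ as a one-parameter family of $(1,8)$-polarized abelian surfaces and analyzing precisely where and how this family degenerates. Concretely, the fibration determines a classifying morphism $c \colon \proj{1} \to \overline{\mathcal{A}}$ from the base to a suitable compactification of the moduli of $(1,8)$-polarized abelian surfaces with the relevant level structure, under which the singular fibers are exactly the preimages of the boundary $\partial\overline{\mathcal{A}}$. I would first record that for general $y$ the generic fiber is smooth (given) and that the degenerate locus $\Delta\subset\proj{1}$ is finite and, for general $y$, reduced. This splits the problem into two independent tasks: computing $\#\Delta$ (the count) and identifying which boundary stratum of $\overline{\mathcal{A}}$ the map $c$ meets (the fiber type).

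For the count I would first dispose of a red herring: the $64$ nodes of $V_{8,y}$ are \emph{not} markers of the singular fibers but rather the base locus of the pencil, since by the preceding theorem each fiber meets each exceptional curve once ($A.e=1$), so every fiber passes through all $64$ node-sections. The number $\#\Delta$ must therefore come from the degree of the discriminant of the pencil of $(2,2,2,2)$ intersections, equivalently from the intersection number $c^{*}[\partial\overline{\mathcal{A}}]$ computed via the induced polarization on $\proj{1}$; I expect this to evaluate to $8$. The $\Heis$-symmetry provides a strong consistency check: $\Heis$ preserves the pencil and hence acts on the base $\proj{1}$ through a cyclic quotient (a finite abelian group acts on $\proj{1}$ only cyclically), so $\Delta$ is an invariant configuration, and genericity of $y$ forces its eight points to be distinct with reduced fibers above them.

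To identify the fiber type I would analyze one degeneration in detail and propagate by symmetry. Choosing a local coordinate centered at a point of $\Delta$, I would substitute into the four defining quadrics, take the flat limit, and recognize the limiting surface as the union $\bigcup_{p\in E}\overline{p,\,p+e}$ of chords of an elliptic normal curve $E\subset\proj{7}$ joining each point to a fixed translate $p+e$---that is, a translation scroll. Equivalently, via semi-abelian (Mumford) reduction the limit of the abelian surface is the closure in $\proj{7}$ of an extension of an elliptic curve by $\mathbb{G}_m$, whose projectivization is exactly such a scroll. Since translation scrolls constitute the generic boundary points of $\overline{\mathcal{A}}$ (as noted in the abstract and in \cite{GrossPopescu}), it then suffices to show that for general $y$ the classifying map $c$ is transverse to $\partial\overline{\mathcal{A}}$ and meets only this generic divisor rather than any deeper stratum.

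The main obstacle is this last identification. The count of eight is comparatively robust, following from a single discriminant-degree computation corroborated by the cyclic symmetry of the base. The delicate point is certifying the fiber type: one must rule out deeper degenerations---a more singular scroll, a tangent developable arising when $e\to 0$, or a reducible union of planes---and confirm that $c$ is tangent to no boundary stratum and avoids boundary intersections. This demands either a genericity/transversality argument valid for general $y$ or an explicit and somewhat delicate computation of the flat limit of the four quadrics at each of the eight points of $\Delta$, and it is here that the real work of the proof lies.
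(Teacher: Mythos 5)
The paper does not prove this proposition: it is imported verbatim from \cite{GrossPavanelli} (stated there alongside Theorem~1.1 as Proposition~1.3), where the degenerate members of the Heisenberg-invariant pencil of $(2,2,2,2)$-intersections are identified explicitly as translation scrolls $T_{E,\rho}$ over secant lines of Heisenberg-invariant elliptic normal curves of degree $8$ in $\proj{7}$. So there is no internal argument to compare against, and your proposal has to be judged on its own terms.

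On those terms it is a strategy, not a proof. Both load-bearing steps are deferred: you write that you ``expect'' the discriminant degree to evaluate to $8$, and you explicitly concede that the identification of the flat limit as a translation scroll (and the exclusion of deeper strata) is ``where the real work of the proof lies.'' Neither computation is set up in a way that could be completed as written --- in particular, evaluating $c^{*}[\partial\overline{\mathcal{A}}]$ presupposes a modular interpretation of the compactified moduli of $(1,8)$-polarized surfaces with level structure and an extension of $c$ over $\Delta$, which is at least as hard as the direct analysis of the pencil it is meant to replace. Two specific assertions are also wrong. First, the consistency check via the Heisenberg action on the base is vacuous: by the paper's own Proposition~2.3 the group $K(H)\cong\Z_8\times\Z_8$ acts by translations along the fibers, i.e.\ trivially on $B\cong\proj{1}$, so it imposes no constraint on $\Delta$. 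Second, the claim that a finite abelian group can act on $\proj{1}$ only through a cyclic quotient is false --- the Klein four group embeds in $\mathrm{PGL}_2(\C)$. Neither error is fatal to the overall plan, but they remove the only corroboration you offer for the count of eight. A completed proof along the lines actually used in the literature would instead work directly with the four quadrics: locate the members of the pencil containing an $\Heis$-invariant elliptic normal octic curve, verify there are eight such members for general $y$, and check by direct computation that each such member is the union of the secant lines $\langle x, x+\rho\rangle$ as in equation~(3.1) of the paper.
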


\subsection{The Dual Fibration: $\dual{V}$}

Given a smooth fiber $A$ and an ample (1,8) polarization $H_A$, we get a map $\phi_{H_A} : A \longrightarrow \dual{A}$, $x \mapsto t^\ast_xH \tensor H^{-1}$.  By standard theory this map is an isogeny with kernel isomorphic to $\Z_8 \times \Z_8$. Denote the kernel by $K_A(H)$.

\begin{proposition}\label{Dual} The action of the kernel extends to the singular fibers yielding a global quotient $\dual{V} = V/K(H)$ that agrees on the smooth fibers with the standard isogeny $A \rightarrow \dA$.
\end{proposition}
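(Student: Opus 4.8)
The plan is to realize the fiberwise translation action as the restriction of the global Heisenberg action on $\proj{7}$, to lift that action along the small resolution, and then to read off the quotient fiber by fiber. First I would observe that the center of $\Heis$, generated by $\xi\cdot\id$, acts on $\proj{7}=[x_0:\cdots:x_7]$ by scalars and hence trivially, so the effective action factors through $\Heis/Z \cong \Z_8\times\Z_8$, which I identify with the global group $K(H)$. By the Gross--Popescu description of the embedding, on each smooth fiber $A$ of $\pi$ this action is precisely translation by the kernel $K_A(H)=\ker\phi_{H_A}$: each such $x$ satisfies $t_x^\ast H\cong H$, which is exactly why it operates linearly on $\proj{7}=\mb{P}(H^0(A,H)^\ast)$. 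In particular the action preserves $H|_A$, commutes with $\pi$, and fixes $\proj{1}$ pointwise. Thus one already has a genuine action of $K(H)$ on the singular model $V_{8,y}$ by projective automorphisms, automatically defined over the eight singular fibers.

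The substantive step, and the main obstacle, is to lift this action along the small resolution $\nu\colon V=V^1_{8,y}\to V_{8,y}$. By the structure description above the singular locus is the single $\Heis$-orbit of $y$, so $K(H)$ permutes the $64$ ordinary double points \emph{simply transitively}. Each node admits two small resolutions, and a projective automorphism of $V_{8,y}$ lifts to the chosen resolution exactly when it carries the selected exceptional $\proj{1}$ at each node to the selected one at the image node. I would verify this compatibility directly: fixing the resolution over one node and transporting it by the simply transitive $K(H)$-action produces an equivariant system of choices, and one then checks that this system is the one defining $V^1_{8,y}$, equivalently that the two families of exceptional curves are interchanged correctly. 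Granting this, every $g\in K(H)$ lifts to an automorphism of $V$ commuting with $\nu$ and with $\pi$, yielding a global $K(H)$-action on $V$.

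Finally I would form $\dual{V}=V/K(H)$ and identify it on the smooth fibers. Over a general point of $\proj{1}$ the fiber is a smooth abelian surface $A$ on which $K(H)$ acts as translation by the finite subgroup $K_A(H)$; this action is free, so $A\to A/K_A(H)$ is étale and is precisely the standard isogeny $\phi_{H_A}\colon A\to\dA$ onto the dual abelian surface. Hence $\dual{V}\to\proj{1}$ has general fiber $\dA$ and agrees fiberwise with the dual isogeny, as claimed; the behavior over the singular translation-scroll fibers then follows by flatness, the $K(H)$-action there being the limiting semi-abelian translation. I expect the only genuine work to be the equivariance of the small resolution in the second step, the remainder being standard abelian-variety theory together with the globally defined Heisenberg action.
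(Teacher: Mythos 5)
Your proposal is correct and follows the same basic route as the paper --- realize the fiberwise translation by $K(H)$ as the restriction of the global Heisenberg action on $\proj{7}$, so that it is automatically defined on the singular fibers of $V_{8,y}$, and then pass to the quotient --- but the two arguments distribute the work differently. The paper outsources the extension of the translation action to the singular fibers to Hulek--Kahn--Weintraub (Propositions 5.40 and 5.41) and to the description of the line orbits in Gross--Pavanelli, and it emphasizes instead that the fixed locus of $\Heis$ on $\proj{7}$ is disjoint from $V_{8,y}$, so the action is free; you omit this freeness check away from the smooth fibers, and you should add it since it is what is used downstream (smoothness of $\dual{V}$, the Leray--Serre computation, and $\pi_1(\dual{V})=\Z_8\times\Z_8$). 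Conversely, you correctly isolate the one step the paper passes over in silence: the action lives a priori on the singular model $V_{8,y}$, and one must check that it lifts through the small resolution $V^1_{8,y}\to V_{8,y}$, i.e.\ that the choice of ruling at each of the $64$ nodes is $K(H)$-equivariant. Your reduction via the simply transitive permutation of the nodes is the right idea, but its resolution remains a promissory note (``one then checks that this system is the one defining $V^1_{8,y}$''); in Gross--Popescu this is automatic because $V^1_{8,y}$ is obtained by blowing up a Heisenberg-invariant Weil divisor, and citing that construction would close the gap cleanly. Net: your decomposition is arguably the more transparent one, at the cost of one deferred equivariance claim and the missing freeness statement.
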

\begin{proof}
The action of $\Heis$ on $\proj{7}$ is not free but its fixed points are disjoint from the locus defining $V_{8,y}$  and hence acts freely there (see \cite{GrossPavanelli}). Further, the action is such that on smooth $A$ with ample line bundle $H_A$ it acts as translation by elements of the kernel $K(H)$.  By \cite{HulekKahnWeintraub} Propositions 5.40 and 5.41 this action extends to the singular fibers.

On a smooth fiber $K(H)$ induces an isogeny $A \longrightarrow \dA$ permuting the 64 sections $e_i$, and on the singular fibers $T_i$ it acts along the elliptic curve $E_i$ forming the singular locus  by translation by the points of order 8, this action moves each line $l$ in the singular fiber in an orbit of 64 other lines in the fiber (See comments before Theorem 1.4 in \cite{GrossPavanelli}).  Taking the quotient by this group action we get the dual fibration $\dual{\pi}:\dual{V} \longrightarrow B$.  Let $\phi_H: V \to \dual{V}$ denote the quotient map.
\end{proof}

\begin{proposition}\label{DualMult}
The hodge numbers of $V$ and $\dual{V}$ are the same and an integral basis for $H^\ast(\dual{V},\Z)$ is $\dH$, $\dA$, $\de$, $\dE$, $[pt]$, where $\dE$ is the class of a curve forming the singular locus of the singular fibers, and $8[\dE] = \dl$.  The relations between the classes on $\dual{V}$ and $V$ are.  
	\begin{displaymath}
		\begin{array}{cc} 
		 	\phi_H^\ast(\dH) = 8H 		& 	\phi_{H\ast}(H) = 8\dH \\
		 	\phi_H^\ast(\dA) = A 		& 	\phi_{H\ast}(A) = 64\dA \\
		 	\phi_H^\ast(\de) = 64e 		& 	\phi_{H\ast}(e) = \de \\
		 	\phi_H^\ast(\dl) = 64l 		& 	\phi_{H\ast}(l) = 8\dE \\
		 	\phi_H^\ast(\dE) = 8l 		&	\phi_{H\ast}([pt]) = [pt] \\
		 	\phi_H^\ast([pt]) =64[pt] 	& 				
		\end{array}
	\end{displaymath}
%E = 8l, 8\dE = \dl
\end{proposition}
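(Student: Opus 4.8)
The plan is to exploit that $\phi_H : V \to \dual{V}$ is, by Proposition~\ref{Dual}, a finite étale Galois cover of degree $64$ with deck group $K(H) \cong \Z_8 \times \Z_8$, and to reduce every assertion to two formal identities plus a small number of genuinely geometric coefficients. The formal tools are the degree identity $\phi_{H*}\phi_H^* = 64\cdot\mathrm{id}$ on $H^*(\dual{V})$ and the averaging identity $\phi_H^*\phi_{H*} = \sum_{g \in K(H)} g^*$ on $H^*(V)$, together with the projection formula $\phi_{H*}(\phi_H^*\alpha \cdot \beta) = \alpha \cdot \phi_{H*}\beta$ and multiplicativity of $\phi_H^*$. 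Each row of the table is then obtained by fixing one coefficient geometrically and letting these identities force its partner.

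First I would settle the Hodge numbers. Since $\phi_H$ is a free quotient, $\phi_H^*$ identifies $H^*(\dual{V},\Q)$ with the invariants $H^*(V,\Q)^{K(H)}$, so it suffices to show $K(H)$ acts trivially on $H^*(V,\Q)$. On $H^2$ this is immediate: $A$ is the fiber class and $H$ is fixed because $K(H)$ is precisely the group of fiberwise translations preserving the polarization. On $H^4$ I would use that the pairing $H^2 \times H^4 \to H^6 \cong \Z$ is nondegenerate modulo torsion (Poincaré duality), with $H.e=0$, $H.l=1$, $A.e=1$, $A.l=0$ from Theorem 2.1; hence a class in $H^4(V,\Q)$ is determined by its intersections with the fixed classes $H$ and $A$, and since each $g \in K(H)$ preserves these numbers it acts trivially, even though it permutes the $64$ sections $e_i$ as cycles. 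The odd cohomology $H^3$ is controlled through the Leray filtration by the $H^1$ of the fibers, on which fiberwise translation acts trivially. Thus $K(H)$ acts trivially on all of $H^*(V,\Q)$, the action respects the Hodge decomposition, and the Hodge numbers of $\dual{V}$ coincide with those of $V$.

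With this in hand the table splits into an easy part and a hard part. The easy relations are read off geometrically and completed formally: fibers map to fibers by the degree-$64$ isogeny, giving $\phi_H^*(\dA)=A$ and hence $\phi_{H*}(A)=64\dA$; points give $\phi_H^*([pt])=64[pt]$ and $\phi_{H*}([pt])=[pt]$; the $64$ sections $e_i$ form a single $K(H)$-orbit, all homologous to $e$ by the intersection-number argument above, so $\phi_H^{-1}(\de)=\sum_i e_i$ yields $\phi_H^*(\de)=64e$ while each $e_i\to\de$ is an isomorphism, giving $\phi_{H*}(e)=\de$. The polarization row is the first nonformal input: on a smooth fiber, writing $\phi_H^*(\dH)=cH$, the formula $(\phi_H^*\dH)^2=64\,\dH^2$ together with $H^2\cdot A=\dH^2\cdot\dA=16$ (both fibers carry a type-$(1,8)$ polarization, so $c=8$) pins $\phi_H^*(\dH)=8H$, and applying $\phi_{H*}$ with the degree identity forces $\phi_{H*}(H)=8\dH$. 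The singular-fiber row rests on $\dl = 8\dE$: granting it, $\phi_H^*(\dl)=64l$ gives $\phi_H^*(\dE)=8l$, and the degree identity gives $\phi_{H*}(l)=8\dE$. I would then confirm internal consistency by computing intersection numbers on $\dual{V}$ via the projection formula (for instance $\dH^3 = \tfrac1{64}(8H)^3 = 128$ and $\dA\cdot\de = \tfrac1{64}(A\cdot 64e)=1$); the relation $\dA\cdot\de=1$ simultaneously certifies, by unimodularity of Poincaré duality, that $\dH,\dA,\de,\dE,[pt]$ are primitive algebraic cycles forming an integral basis modulo torsion.

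The hard part is extracting the two geometric coefficients, and especially the factor $8$ in $\dl = 8\dE$. On the smooth fibers everything is translation by an abelian group, so $\phi_H^*(\dH)=8H$ follows cleanly from the self-intersection computation above. The delicate point is the behavior over the eight singular fibers: there the fibers are translation scrolls with singular locus an elliptic curve $E_i$, the action of $K(H)$ is no longer literally translation on an abelian surface, and I would need the explicit descriptions in \cite{GrossPavanelli} and \cite{HulekKahnWeintraub} of how the order-$8$ points move the lines $l$ in orbits of $64$ and act along the curves $E_i$ in order to verify that the class of a line is exactly $8$ times the class $\dE$ of the singular curve downstairs. This is where the numerology $64 = 8\cdot 8$ must be checked on the boundary of the fibration rather than merely inferred from the generic fiber.
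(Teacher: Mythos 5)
Your proposal is correct, and on the table of relations it follows essentially the paper's route: every entry except $\phi_H^\ast(\dH)=8H$ is read off from the orbit/preimage description of the $K(H)$-action established in Proposition \ref{Dual} (the $64$ sections forming one orbit, fibers mapping by the degree-$64$ isogeny, lines in the singular fibers moving in orbits of $64$ along $E_i$), and the remaining coefficient is pinned down by the same self-intersection computation you give --- writing $\phi_H^\ast(\dH)=aH$ and comparing $a^2\,H^2.A=16a^2$ with $\deg(\phi_H)\cdot\dH_{\dA}^2=64\cdot 16$, using that the dual of a $(1,8)$-polarized abelian surface is again of type $(1,8)$. Where you genuinely diverge is the Hodge-number claim. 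The paper runs the Cartan--Leray spectral sequence $H^p(G,H^q(V,\Q))\Rightarrow H^{p+q}(\dual{V},\Q)$ with $G=\Z_8\times\Z_8$, kills the higher group cohomology over $\Q$, and reads off the answer from $E_2^{0,q}=H^0(G,H^q(V,\Q))$; you instead identify $H^\ast(\dual{V},\Q)$ with the invariants and then argue directly that $K(H)$ acts trivially on $H^\ast(V,\Q)$ (fixing $H$ and $A$ on $H^2$, nondegeneracy of the pairing against $H^2$ on $H^4$, triviality of translations on fiber cohomology for $H^3$). These are the same reduction, but your version supplies a step the paper leaves implicit: $E_2^{0,q}$ is only the invariant subspace, so equality of Betti and Hodge numbers does require triviality of the action, not merely degeneration of the spectral sequence. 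Your closing caveat --- that $\dl=8\dE$ and the factor $64=8\cdot 8$ must be verified on the translation scrolls via the explicit descriptions in \cite{GrossPavanelli} and \cite{HulekKahnWeintraub} rather than inferred from the generic fiber --- is exactly where the paper, too, leans on those references through the proof of Proposition \ref{Dual}.
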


\begin{proof} Since $\pi_0(V)=0$, and $G=\Z_8 \times \Z_8$ acts freely on $V$ we can use  the Leray-Serre spectral sequence (\cite{Weibel} Theorem 6.10.10)
\begin{displaymath}
	E^{p,q}_2 = H^p(G,H^q(X,A)) \Longrightarrow H^{p+q}(X/G,A)
\end{displaymath}
to calculate the cohomology of $\dual{V}$. Using \cite{Weibel} Theorem 6.2.2 we can calculate that 
\begin{displaymath}
	H^n(\Z_8 \times \Z_8,\Q) = \left\{ \begin{array}{rr}
									\Q  &  \mbox{if $n=0$}     \\
									0   &  \mbox{otherwise}   \\
									\end{array} 
								\right.
\end{displaymath}
So the spectral sequence degenerates at $E^{p,q}_2$ with terms concentrated in the first column $E^{0,p} = H^0(\Z_8 \times \Z_8, H^p(X,\Q))$ and we conclude that the dimension of the cohomology mod torsion is as desired.
% For us, $G=\Z_8 \times \Z_8$ The spectral sequence with $\Q$ coefficients has $H^n(G,\Q)=0$ so the spectral sequence degenerates and $H^0(G,H^q(V,\Q)) = H^q(V,\Q)^G = H^q(\dual{V},\Q)$.  We have $H^p(G,H^q(V,\Z))$ given by the array
% \begin{displaymath}
% 	\begin{array}{cccc}
% 		\Z^2     &     &      &      \\
% 		0         &  0  &   0  &      \\
% 		\Z        &  0  & \Z_8 \times \Z_8   &  \circledast \\
% 	\end{array}
% \end{displaymath}
%By knowing $\circledast$ we should be able to calculate the integral cohomology of the quotient mod torsion.

With the exception of the fact that $\phi_H^\ast(\dH) = 8H$ all pullback/pushforward results follow from the proof of \ref{Dual}. For $\phi_H^\ast \dH = 8H$ we use the following.  By general results for Abelian varieties, the dual to a variety with polarization type $(1,8)$ is also of the same type (\cite{BirkenhackeLange} Proposition 14.4.1).  So the smooth fibers $\dA$ of $\dual{V}$ are also of type $(1,8)$, write $\dH_{\dA}$ for the polarization divisor restricted to a smooth fiber.  Since $\dual{H}_{\dual{A}}$ is of type $(1,8)$ we have that $\dual{H}_{\dual{A}}.\dual{H}_{\dual{A}}=16$.   Write $aH=\phi_H^\ast \dH$ and consider the pullback $\phi_H^\ast (\dH_{\dA}^2) = \phi^\ast_H (\dH_{\dA}) .\phi^\ast_H (\dH_{\dA})=a^2 H.H.A = a^2 16$ on the other hand $\phi_H^\ast (\dH_{\dA}^2) = \phi^\ast_H (16[pt]) = 64  (16) [pt]$ from which we see that $a=8$
\end{proof}

\begin{corollary}

The intersection product in $H^\ast(\dual{V},\Z)$ mod torsion is
	\begin{center}
		$\begin{array}{ccccc}  \dH . \de = 0 &  \dH . \dl = 8 pt  &  \dH . \dE = 1 pt  &  \dA . \de =1 pt   &  \dA . \dl = 0 \end{array}$ \\
		$\begin{array}{ccc}  \dH . \dH = 16 \de  + 128 \dE    &  \dH . \dA = 16 \dE  &  \dA . \dA =0 \end{array}$ \\ 
		$\begin{array}{ccc}  \dH ^3= 128[pt]   &  \dH ^2. \dA  = 16 [pt]  &  \dA ^3=0 \end{array}$ \\
	\end{center}
\end{corollary}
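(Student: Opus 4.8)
The plan is to reduce every intersection number on $\dual{V}$ to a computation on $V$ via the quotient map $\phi_H : V \to \dual{V}$ of Proposition \ref{Dual}, combined with the pullback/pushforward formulas of Proposition \ref{DualMult} and the intersection ring of $V$ recorded in Theorem 2.1. The central observation is that $\phi_H$ is a finite flat map of degree $64$, since its fibers are the $\Z_8\times\Z_8$-orbits of $K(H)$. Hence $\phi_H^\ast$ is a ring homomorphism and $\phi_{H\ast}\phi_H^\ast = 64\cdot\mr{id}$ on the torsion-free cohomology. Together with the projection formula this yields, for any classes $\hat\alpha,\hat\beta$ on $\dual{V}$,
\begin{equation}\label{pfeq}
\hat\alpha\cdot\hat\beta \;=\; \tfrac{1}{64}\,\phi_{H\ast}\!\left(\phi_H^\ast(\hat\alpha)\cdot\phi_H^\ast(\hat\beta)\right),
\end{equation}
which expresses each product on $\dual{V}$ purely in terms of data already known on $V$.

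First I would treat the top (degree-three) products, where the answer is a multiple of $[pt]$ and the bookkeeping is cleanest: for instance $\phi_H^\ast(\dH^3)=(8H)^3=512\,H^3=512\cdot16\,[pt]$, and since $\phi_{H\ast}$ fixes $[pt]$, dividing by $64$ gives $\dH^3=128\,[pt]$; the values $\dH^2\dA$ and $\dA^3$ come out identically from $H^2A=16$ and $A^2=0$. Next I would handle the degree-two products $\dH^2,\dH\dA,\dA^2$, which lie in $H^4(\dual{V},\Z)/\mr{Tors}$ and must be written in the basis $\de,\dE$. Here the pushforward formulas $\phi_{H\ast}(e)=\de$ and $\phi_{H\ast}(l)=8\dE$ are exactly what convert the $V$-side answer into the correct integral combination: $\phi_H^\ast(\dH^2)=64\,H^2=64(16e+16l)$ pushes forward to $64\cdot16\,\de+64\cdot16\cdot8\,\dE$, so \eqref{pfeq} gives $\dH^2=16\de+128\dE$. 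Finally the mixed degree-one pairings $\dH\cdot\de,\ \dH\cdot\dl,\ \dH\cdot\dE,\ \dA\cdot\de,\ \dA\cdot\dl$ follow the same recipe, using the curve-class pullbacks $\phi_H^\ast(\de)=64e$, $\phi_H^\ast(\dl)=64l$, $\phi_H^\ast(\dE)=8l$ and the numerical pairings $H.e=0$, $H.l=1$, $A.e=1$, $A.l=0$ of Theorem 2.1.

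The only point requiring genuine care — and the natural place for a stray factor of $8$ to creep in — is keeping the normalizations of the curve classes straight, since $\phi_{H\ast}(l)=8\dE$ and $\dl=8\dE$, so that $l$, $\dl$, and $\dE$ each carry a different power of $8$. I would guard against this by cross-checking: computing $\dH\cdot\dl$ directly and comparing it with $8(\dH\cdot\dE)$, and confirming that every application of \eqref{pfeq} returns an \emph{integer} after division by $64$. Integrality is in fact automatic, because the left-hand side of \eqref{pfeq} is an honest intersection class and multiplication by $64$ is injective on the torsion-free quotient, so any non-integral outcome would immediately signal an arithmetic slip rather than a real phenomenon. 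No geometric obstacle arises: once the degree-$64$ projection formula is in hand, the corollary is a purely computational consequence of Theorem 2.1 and Proposition \ref{DualMult}.
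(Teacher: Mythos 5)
Your proposal is correct and takes essentially the same approach as the paper: the paper's proof likewise reduces every product to $V$ by repeated use of the multiplicativity of $\phi_H^\ast$ together with the projection formula, e.g.\ $[\dH].[\dl]=[\dH].\phi_{H\ast}[l]=\phi_{H\ast}(\phi_H^\ast[\dH].[l])=\phi_{H\ast}(8H.l)=8[pt]$. The only cosmetic difference is that the paper writes one factor as a pushforward from $V$ and applies the projection formula once, so no division by $\deg\phi_H=64$ is ever needed, whereas your symmetric formula pulls back both factors and divides by $64$ at the end.
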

\begin{proof} The proof is just a repeated application of the fact that for cohomology classes $x,y$ we have $\phi_H^\ast(x.y)=\phi_H^\ast(x).\phi_H^\ast(y)$ and the projection formula $\phi_{H\ast}(x.\phi_H^\ast y)=\phi_{H\ast}(x).y$ .  For example, $[\dH].[\dl] = [\dH].\phi_{H\ast}[l] = \phi_{H\ast}(\phi^\ast_H[\dH].[l]) = \phi_{H\ast}([8H].l]) = 8[pt]$. The others are similar. 
\end{proof}

\begin{proposition} The cone of effective curves on $\dual{V}$ is given by $a[\de] + b [\dl]$ for $a$,$b>0$
\end{proposition}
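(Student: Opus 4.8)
The plan is to work inside the two–dimensional real vector space $N_1(\dual{V})_\R \isom (H^4(\dual{V},\Z)/\text{Tors})\tensor\R$, which by Proposition \ref{DualMult} has basis $\de,\dE$. Since $\dl = 8\dE$, the cone described in the statement is exactly the closed first quadrant $\{a\de + b\dE : a,b\ge 0\}$, with edge rays $\de$ and $\dl$. I would prove the claimed equality by two inclusions: that every effective curve lies in this quadrant, and that the two edge rays are themselves effective.

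For the containment of the effective cone in the quadrant, the idea is to exhibit two nef divisor classes that read off the coordinates $a$ and $b$. The first is the fiber class $\dA = \dual{\pi}^\ast([pt])$, which is nef because an irreducible curve on $\dual{V}$ either lies in a fiber (so $\dA\cdot C = 0$) or dominates the base (so $\dA\cdot C > 0$). The second is $\dH$. Writing a curve class as $\gamma = a\de + b\dE$ and using the intersection numbers of the preceding Corollary ($\dA\cdot\de = 1\,[pt]$ and $\dA\cdot\dl = 0$, whence $\dA\cdot\dE = 0$; $\dH\cdot\de = 0$ and $\dH\cdot\dE = 1\,[pt]$) yields $\dA\cdot\gamma = a\,[pt]$ and $\dH\cdot\gamma = b\,[pt]$. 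Hence, once both $\dA$ and $\dH$ are known to be nef, every effective $\gamma$ satisfies $a\ge 0$ and $b\ge 0$.

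For the reverse inclusion I would check that the two generators are classes of honest effective curves. The class $\de = \phi_{H\ast}(e)$ is the image under the finite map $\phi_H$ of an exceptional curve $e$ (a section of $\pi$), hence effective; likewise $\dl = \phi_{H\ast}(l)$ is the image of the line $l$, so $\dE = \tfrac18\dl$ is a positive rational multiple of the effective class of the singular–locus curve and is itself effective. Since the effective cone is convex, it then contains every $a\de + b\dE$ with $a,b\ge 0$, giving the full quadrant; combined with the previous paragraph this proves equality (the strict inequalities in the statement describing the interior, with the two extremal rays $\de$ and $\dl$ on the boundary).

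The one step needing genuine care is the nefness of $\dH$, which is really the heart of the argument. Here I would use that $\phi_H^\ast\dH = 8H$ by Proposition \ref{DualMult}, that $H$ is the pullback of the hyperplane class under the morphism $V \to V_{8,y}\hookrightarrow\proj{7}$ and is therefore nef on $V$, and that $\phi_H$ is a finite surjective morphism. Nefness then descends by the projection formula: for an irreducible curve $C\subset\dual{V}$, choosing an irreducible $C'\subset V$ finite of degree $d$ over $C$ gives $d\,(\dH\cdot C) = \dH\cdot\phi_{H\ast}C' = \phi_H^\ast\dH\cdot C' = 8\,(H\cdot C')\ge 0$. The geometric content of the proposition is thus that $\dA$ and $\dH$ are precisely the supporting divisors of the two extremal rays, each vanishing on one generator and strictly positive on the other.
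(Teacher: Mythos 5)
Your proof is correct, and at its core it is the same argument as the paper's: both amount to pairing an arbitrary effective curve class against the two divisor classes $\dA$ and $\dH$ and observing that the resulting intersection numbers are exactly the coordinates $a$ and $b$. Where you diverge is in how nonnegativity of those pairings is justified. The paper argues directly with linear systems: if $\dc\cdot\dA<0$, then every global section of $\OO(A)$ would have to vanish along the curve, i.e.\ the curve would lie in the fixed locus of $|A|$, which is empty since $\OO(A)=\pi^\ast\OO_{\proj{1}}(pt)$; hence the restriction map $H^0(\OO(A))\to H^0(\OO(A)\rest_{\dc})$ is nonzero and $\OO(A)\rest_{\dc}$, a line bundle of negative degree, would have a section --- contradiction. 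The same is run for the other coefficient using that $|H|$ defines the morphism to $\proj{7}$. Notably the paper phrases this in terms of $\pi$, $A$ and $H$ on $V$ and implicitly transfers the conclusion to $\dual{V}$; your descent of nefness of $\dH$ through the finite \'etale quotient $\phi_H$ via the projection formula makes that transfer explicit, and is arguably the cleaner route since basepoint-freeness of $\dH$ itself on $\dual{V}$ is not immediate. You also supply the reverse inclusion --- that $\de$ and $\dl$ are genuinely effective, being $\phi_{H\ast}$ of the section $e$ and of the line $l$ --- which the paper omits entirely; this is a real improvement, since without it one has only shown that the effective cone is \emph{contained in} the quadrant, not that it equals it.
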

\begin{proof}  
For an effective curve $\dc$ write $[\dc] = -a[\de] + b[\dl]$ for its class.  Suppose $a > 0$ and we have $[\dc].[\dA] = -a$. Consider the map $H^0(\OO(A)) \rightarrow H^0(\OO(A)\rest_c)$.  We claim this map is not the zero map. To be the zero map we need that all global section of $\OO(A)$ vanish along the curve $\dc$.  That is to say that $\dc$ is in the fixed locus of the linear system $|A|$.  But $\OO(A) = \pi^\ast \OO_{\proj{1}}(pt)$ so the fibers are all in the same linear system and there is no fixed locus.  By contradiction the map $H^0(\OO(A)) \rightarrow H^0(\OO(A)\rest_c)$ is not the zero map.  Since it is not the zero map, we have that $H^0(\OO(A)\rest_c) \neq 0$  But $\OO(A)\rest_c$ is a line bundle of negative degree and thus has no global sections.  We conclude that $-a$ must be positive.  

For the case that $b<0$ we can make a similar argument once we note that $H$ gives the morphism $V\rightarrow \proj{7}$ and so cannot have a fixed locus.
\end{proof}

\begin{proposition}\label{Ample}For $k,l \geq 0$, we have $\dual{H}_0 = l\dH + k\dA$ is an ample divisor.
\end{proposition}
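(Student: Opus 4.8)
The plan is to apply Kleiman's ampleness criterion. Since $\dual{V}$ is a smooth projective threefold, a divisor class is ample if and only if it pairs strictly positively with every nonzero class in the closed cone of curves $\overline{NE}(\dual{V})$. The preceding proposition already pins down the cone of effective curves — it is spanned by $[\de]$ and $[\dl]$ — so the strategy reduces to computing the two pairings $\dual{H}_0 . \de$ and $\dual{H}_0 . \dl$ and reading off their signs, rather than checking all of the Nakai--Moishezon conditions (which would also demand controlling $\dual{H}_0^2 . S$ for every surface $S$).

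First I would observe that the space of numerical curve classes $N_1(\dual{V})_{\R}$ is only $2$-dimensional: by Proposition \ref{DualMult} the degree-four cohomology $H^4(\dual{V},\Z)/\mathrm{tors}$ is spanned by $\de$ and $\dE$, with $\dl = 8\dE$, so $[\de]$ and $[\dl]$ are linearly independent. Consequently the effective cone (with $a,b>0$) spans the whole plane, and its closure $\overline{NE}(\dual{V})$ is exactly the closed cone $\{a[\de]+b[\dl] : a,b\geq 0\}$ on the two extremal rays $\R_{\geq 0}[\de]$ and $\R_{\geq 0}[\dl]$. It therefore suffices to test $\dual{H}_0$ against these two generators: any nonzero effective class is $a[\de]+b[\dl]$ with $a,b\geq 0$ not both zero, so strict positivity of $\dual{H}_0$ on each ray forces positivity on the entire punctured cone.

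The two intersection numbers are then immediate from the Corollary. Expanding $\dual{H}_0 = l\dH + k\dA$ and using $\dH.\de = 0$, $\dA.\de = 1$, $\dH.\dl = 8$, $\dA.\dl = 0$, one gets
\begin{displaymath}
\dual{H}_0 . \de = k, \qquad \dual{H}_0 . \dl = 8l .
\end{displaymath}
Thus $\dual{H}_0$ lies in the interior of the nef cone, hence is ample, precisely when both $k>0$ and $l>0$; on a boundary ray ($k=0$ or $l=0$) the class $\dH$, respectively $\dA$, remains nef but fails to be ample because it annihilates $\de$, respectively $\dl$. In this picture $\dH$ and $\dA$ are exactly the two nef generators whose strictly positive combinations fill out the ample interior.

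The only genuinely substantive ingredient is the input from the previous proposition, namely that $[\de]$ and $[\dl]$ generate the \emph{closed} Mori cone, so that Kleiman's criterion collapses to the two sign computations above. Once the effective cone is known and $N_1(\dual{V})_{\R}$ is seen to have rank $2$, there are no hidden limiting extremal rays, and what remains is a pure sign calculation. The main obstacle is therefore conceptual rather than computational: correctly invoking Kleiman's criterion in rank $2$ and being careful that the stated ampleness genuinely requires $k,l>0$, the boundary combinations being only nef.
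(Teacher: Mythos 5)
Your proof follows the same route as the paper --- Kleiman's criterion applied to the two generators of the effective cone --- but you actually carry out the intersection computations ($\dual{H}_0.\de = k$, $\dual{H}_0.\dl = 8l$) that the paper dismisses as ``clearly satisfied.'' In doing so you correctly observe that ampleness genuinely requires $k,l>0$: in the boundary cases $k=0$ or $l=0$ the class is only nef (since $\dH.\de=0$ and $\dA.\dl=0$), so the hypothesis $k,l\geq 0$ as stated in the proposition is slightly too weak, a point the paper's one-line proof glosses over.
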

\begin{proof}
	By Kleimann's ampleness criteria we have that $\dH_0$ is a ample divisor iff
	\begin{eqnarray*}
		\dH_{0}^{>0} & = & \{C \in N_1(\dV) | C.\dH_0 > 0  \}		\\
				& \supset & {\overline{NE(\dV)}}\backslash \{0\} 
	\end{eqnarray*}
	Where $N_1(\dV)$ is the set of 1-cycles and $\overline{NE(\dV)}$ is the closure of the cone of effective curves.  This is clearly satisfied by our $\dH_0$.
\end{proof}

\begin{proposition}\label{EffInt}For $H_0$ an ample divisor given above, $D$ an effective divisor we have
\begin{equation*}
	D.H_0^i.A^{2-i} \geq 0
\end{equation*}
for $0 \leq i \leq 2$.
\end{proposition}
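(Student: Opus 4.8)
The plan is to reduce the inequality to a statement about positivity on a single surface inside $\dual{V}$, where it follows from elementary surface intersection theory. Since the intersection product is linear and an effective divisor is a nonnegative combination of prime divisors, it suffices to treat the case in which $D$ is integral, i.e. an integral projective surface $D \subset \dual{V}$. As $\dual{V}$ is smooth (being the free quotient $V/G$ of the smooth threefold $V$), such a $D$ is Cartier, and for any divisor classes $L_1,L_2$ on $\dual{V}$ the standard restriction identity gives $D\cdot L_1\cdot L_2 = (L_1|_D)\cdot(L_2|_D)$, the right-hand side being computed on $D$. Taking $L_1,L_2$ among $H_0$ and $A$, I would rewrite the quantity to be bounded as $D\cdot H_0^i\cdot A^{2-i} = (H_0|_D)^i\cdot(A|_D)^{2-i}$, an intersection number of two divisor classes on the surface $D$.

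Next I would record the two positivity inputs. First, $H_0|_D$ is ample on $D$: by Proposition~\ref{Ample} the class $H_0 = l\dH + k\dA$ is ample on $\dual{V}$, and the restriction of an ample class to a subvariety is again ample. Second, $A|_D$ is nef and effective: exactly as in the proof of the effective-cone proposition, $\OO(A) = \dual{\pi}^\ast\OO_{\proj1}(\mathrm{pt})$ is pulled back from the base and hence base-point-free, and the restriction of a base-point-free class stays base-point-free, so $A|_D$ is represented by an effective divisor and is nef. With these in hand the three cases follow at once: for $i=2$, $(H_0|_D)^2 > 0$ because an ample class has positive self-intersection on a surface; for $i=1$, $(H_0|_D)\cdot(A|_D) \ge 0$ because an ample class meets every effective divisor nonnegatively (it meets every curve positively); and for $i=0$, $(A|_D)^2 \ge 0$ because a nef class has nonnegative self-intersection, which is in any case immediate globally since $A^2=0$ by the Corollary.

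The step needing the most care is the passage to the surface $D$ together with the behaviour of $A|_D$ in degenerate configurations. If $D$ is singular I would pass to the normalization $\nu\colon\widetilde{D}\to D$ and use $(\nu^\ast(H_0|_D))^2 = (H_0|_D)^2$, so that the surface positivity statements apply on the smooth surface $\widetilde{D}$; and if $D$ happens to be vertical (a fiber, or a sum of fibers) then $A|_D$ may be zero, but in that case the relevant intersection numbers vanish and the inequality holds trivially. Apart from these checks the argument is purely formal, so I do not anticipate a substantive obstacle.
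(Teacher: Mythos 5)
Your argument is correct, but it takes a genuinely different route from the paper. You reduce to a prime divisor $D$ and restrict the intersection to the surface $D$ itself, writing $D.H_0^i.A^{2-i}=(H_0\rest_D)^i.(A\rest_D)^{2-i}$ and then invoking standard surface positivity: $H_0\rest_D$ is ample, $A\rest_D$ is nef (pulled back from the base, hence base-point-free), so the three cases read ample${}^2>0$, ample${}\cdot{}$nef${}\geq 0$, and nef${}^2\geq 0$. The paper instead restricts to the \emph{fibers}: it argues that $D.H_0.A<0$ would force the curve class $D.A$ to be non-effective on each fiber $A$ (since it pairs negatively with the ample $H_0$), hence $H^0(\OO_A(D\rest_A))=0$ for every fiber, hence every global section of $\OO(D)$ vanishes identically --- contradicting effectivity of $D$; the case $i=0$ is read off from $A^2=0$ and $i=2$ is only asserted to be ``similar.'' Your version buys uniformity (all three cases follow from one restriction identity plus ample/nef calculus), avoids the section-theoretic contradiction entirely, and in particular makes the $i=2$ case explicit where the paper waves at it; the paper's version stays closer in style to the effective-cone proposition that precedes it and exploits the fibration structure directly. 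One cosmetic point: the normalization of a surface need not be smooth, but this costs you nothing, since positivity of ample and nef self-intersections holds on any irreducible projective surface, so your fallback to $\widetilde{D}$ (or simply computing the Snapper intersection numbers on the possibly singular scheme $D$) is sound.
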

\begin{proof}
The case $i=0$ follows from the fact that $A^2=0$.  For $i=1$ note that as $H_0$ is ample, $D.H_0.A < 0$ implies that $D.A$ is not the class of an effective curve so $H^0(\OO_A(D\rest_A))=0$.  But if the map $H^0(\OO_V(D)) \rightarrow H^0(\OO_A(D\rest_A)$ given by the restiction map is zero for all A then $H^0(\OO_V(D)) = 0$.  The argument for $i=2$ is similar.
\end{proof}

Note that the same proof would have worked for the classes on $\dV$.  We end this section with a basic observation regarding the fundamental group of $\dV$.

\begin{proposition}
	$V$ is the universal cover of $\dual{V}$ and the fundamental group is given by $\fund{\dual{V}} = Z_8 \times Z_8$
\end{proposition}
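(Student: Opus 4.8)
The plan is to recognize the statement as a formal consequence of covering space theory, once two ingredients are secured: that $\phi_H : V \to \dual{V}$ is a genuine topological covering map, and that $V$ is simply connected. The first ingredient is essentially already in hand. By Proposition~\ref{Dual} we have $\dual{V} = V/K(H)$ with $K(H) \cong \Z_8 \times \Z_8$ acting freely on $V$, and this was reused in the proof of Proposition~\ref{DualMult}. Since $K(H)$ is a finite group acting freely on the Hausdorff space $V$, the action is automatically properly discontinuous, so $\phi_H$ is a covering map whose group of deck transformations is exactly $K(H)$. Granting the second ingredient, the whole proposition follows: a simply connected covering space is \emph{the} universal cover, and for a regular (Galois) cover the deck group is isomorphic to the fundamental group of the base, yielding $\fund{\dual{V}} \cong K(H) = \Z_8 \times \Z_8$.

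The substance of the argument, and the step I expect to be the main obstacle, is therefore establishing that $\fund{V} = 1$. I would proceed in two stages. First, the singular model $V_{8,y}$ is a dimension-three complete intersection in $\proj{7}$ with only isolated singularities, namely its $64$ ordinary double points. The Lefschetz hyperplane theorem, in the homotopy form valid for complete intersections with isolated singularities, then gives $\fund{V_{8,y}} \cong \fund{\proj{7}} = 1$. Second, I must transport this simple connectivity across the small resolution $V \to V_{8,y}$, which is an isomorphism away from the nodes and replaces each node by a rational curve, hence alters $V_{8,y}$ only along a locus of complex codimension two.

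To carry out this transport I would excise the exceptional and singular loci and compare. Since the exceptional locus $E \subset V$ has real codimension four in the smooth threefold $V$, general position gives $\fund{V} \cong \fund{V \setminus E}$; and $\phi_H$ being an isomorphism over the smooth part identifies $V \setminus E \cong V_{8,y} \setminus \{\text{nodes}\}$. On the singular side, a small neighborhood of an ordinary double point is a contractible cone whose link is $S^2 \times S^3$, which is simply connected, so van Kampen yields $\fund{V_{8,y} \setminus \{\text{nodes}\}} \cong \fund{V_{8,y}}$. Chaining these isomorphisms gives $\fund{V} \cong \fund{V_{8,y}} = 1$. The delicate point is exactly this second stage: one must verify that neither the resolution nor the nodal singularities introduce spurious loops, which is why the codimension count and the computation of the ODP link should be done carefully rather than waved at. Should one prefer, the simple connectivity of $V$ can instead be invoked directly from \cite{GrossPopescu, GrossPavanelli}. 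Combining both ingredients, $\phi_H$ exhibits $V$ as a simply connected cover of $\dual{V}$, hence its universal cover, with $\fund{\dual{V}} \cong \Z_8 \times \Z_8$.
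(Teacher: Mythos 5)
Your proof is correct, and it is considerably more complete than the one in the paper. The paper disposes of this proposition in a single sentence: the action of $K(H)$ described in Proposition~\ref{Dual} is free, ``so by standard algebraic topology'' $\fund{\dual{V}} = \Z_8 \times \Z_8$. That argument silently presupposes exactly the ingredient you identify as the real content, namely $\fund{V}=1$; without it, a free $\Z_8\times\Z_8$ action only exhibits $\fund{V}$ as an index-$64$ normal subgroup of $\fund{\dual{V}}$, not the trivial one. Your two-stage argument for simple connectivity --- Lefschetz for the nodal complete intersection $V_{8,y}\subset\proj{7}$, then transport across the small resolution via the real-codimension-four excision of the exceptional curves on one side and the simply connected link $S^2\times S^3$ of an ordinary double point on the other --- is sound and supplies precisely what the paper leaves to the references \cite{GrossPopescu,GrossPavanelli}. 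One notational slip to fix: in the excision step you write that ``$\phi_H$ being an isomorphism over the smooth part identifies $V\setminus E \cong V_{8,y}\setminus\{\text{nodes}\}$,'' but $\phi_H$ is the quotient map $V\to\dual{V}$ in the paper's notation; you mean the small resolution $V=V^1_{8,y}\to V_{8,y}$. With that corrected, your write-up is the argument the paper's one-liner is standing in for.
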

\begin{proof}
	By the description of the action in \ref{Dual} its a free action so by standard algebraic topology $\fund{\dual{V}} = \Z_8 \times \Z_8$ 
\end{proof}

% \begin{proposition} The second chern classes of the fibrations are
% 	\begin{eqnarray}
% 		c_2(T_V) = 8E & & c_2(T_{\dual{V}}) = -16\dl
% 	\end{eqnarray}
% \end{proposition}
% \begin{proof}
% 	
% \end{proof}

%chapter
\section{Fourier-Mukai Transforms on Abelian Surface Fibrations}
The goal of this chapter is to calculate explicitly the descent of the Fourier-Mukai transform to a map on cohomology for our abelian surface fibration.  First we  examine the singular fibers to get an extension of the multiplication map on the smooth fibers.  We use the multiplication map to define a Poincare sheaf on the fiber product and hence a Fourier-Mukai transform. In the last section we calculate the Fourier-Mukai transform for a number of independent sheaves so that we can deduce the action on cohomology.  We begin with a review of the extension of the multiplication maps in the case of curves as it gives the essential idea.

\subsection{Review: Extension of Multiplication for the Nodal Cubic}
Before we proceed recall how this was done in the one dimensional case.  Take $N$ to be a nodal cubic curve.  By extension of multiplication mean a commutative diagram 

\begin{diagram}
	R				&	\rTo^{\res{m}}	&	N			\\
	\dTo			&			 		&	\dTo_{id}	\\
	N \times N 	& \rDashto^m 		& N 
\end{diagram}
where $m$ is defined as the regular multiplication map on the open set $\C^\ast \times \C^\ast$ formed by removing the singular point of N and R is birational to $N\times N$. 

\begin{proposition}For the nodal cubic N there is an extension of the multiplication map.
\end{proposition}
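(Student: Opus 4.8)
The plan is to resolve the rational multiplication map by passing to the normalization of $N\times N$, blowing up the locus where the map is undefined, and then descending the identification that recovers $N$ from its normalization. First I would fix coordinates: let $\nu:\proj{1}\to N$ be the normalization, gluing the two points $0$ and $\infty$ to the node $p$, so that $N\setminus\{p\}\isom\C^\ast$ and the group law on this locus is ordinary multiplication $(s,t)\mapsto st$. Since normalization commutes with products, $\proj{1}\times\proj{1}$ is the normalization of $N\times N$, and on it $m$ lifts to the rational map $\mu:\proj{1}\times\proj{1}\dashrightarrow\proj{1}$, $(s,t)\mapsto st$. Writing this ratio in the affine charts shows that $\mu$ fails to be defined at exactly the two points $(0,\infty)$ and $(\infty,0)$, where one factor is $0$ and the other $\infty$.

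Next I would blow up these two points. Let $W\to\proj{1}\times\proj{1}$ be the blow-up at $(0,\infty)$ and $(\infty,0)$. A short local computation — for instance, near $(0,\infty)$ set $w=1/t$, so that $\mu=s/w$, and examine the two charts of the blow-up of the origin in the $(s,w)$-plane — shows that $\mu$ becomes a genuine morphism $\bar\mu:W\to\proj{1}$, with each exceptional curve mapping isomorphically onto $\proj{1}$. Thus over the normalization the multiplication is resolved.

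Finally I would descend the node-identifications. Recovering $N\times N$ from $\proj{1}\times\proj{1}$ amounts to gluing the fiber divisors $\{0\}\times\proj{1}\sim\{\infty\}\times\proj{1}$ and $\proj{1}\times\{0\}\sim\proj{1}\times\{\infty\}$. The key observation is that $\bar\mu$ carries each of these four divisors into $\{0\}$ or $\{\infty\}$, so that $\nu\circ\bar\mu$ sends all of them to the single node $p$; hence the gluing is compatible with $\bar\mu$ and descends to a morphism $\res{m}:R\to N$ on the quotient $R=W/{\sim}$. Since $W\to\proj{1}\times\proj{1}$ and the two gluings agree away from the node, the induced map $R\to N\times N$ is an isomorphism over the dense open locus, so $R$ is birational to $N\times N$; and by construction $\res{m}$ restricts to $m$ there, which yields the commutative diagram.

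The step I expect to be the main obstacle is checking compatibility at the most singular point of $N\times N$: the four corners $(0,0),(0,\infty),(\infty,0),(\infty,\infty)$ together with the two blown-up points all lie over the single point $(p,p)$, and one must verify that the blow-up and the two gluings interact consistently there, so that $\bar\mu$ genuinely descends to a well-defined morphism and $R$ is a sensible (reduced, birational) model. Should this gluing bookkeeping become unwieldy, a cleaner existence-only alternative is to take $R$ to be the closure of the graph of $m$ inside $N\times N\times N$: projection to the first two factors is automatically birational, and projection to the third factor is automatically a morphism extending $m$, producing the diagram directly.
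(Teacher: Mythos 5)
Your proposal is correct and follows essentially the same route as the paper: normalize to $\proj{1}\times\proj{1}$, observe that the lifted multiplication is undefined exactly at $(0,\infty)$ and $(\infty,0)$, blow up those two points to resolve it, and then glue the boundary divisors back down to obtain $R$ birational to $N\times N$ together with $\res{m}$. The only differences are presentational — the paper realizes the map via the Segre quadric and projection from a line (so the fibers appear as a pencil of $(1,1)$ conics) and additionally identifies the two exceptional curves with each other, whereas you work in affine charts — and your graph-closure fallback is in fact the device the paper itself invokes in the surface case.
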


\begin{proof}
We start by resolving the nodal curves. We have the vertical maps in the diagram
\begin{diagram}
 	\proj{1} \times \proj{1} & \rDashto^{m'} & \proj{1} \\
	\dTo<{\fname{glue}}					&				 &	\dTo 			\\
	N \times N				& \rDashto^m & N
\end{diagram}

Explicitly we can write $m'$ as follows. Consider $\proj{1} \times \proj{1}$ embedded in $\proj{3}$ as a quadric $Q$ using the Segre embedding $[u,v],[s,t] \longmapsto [us,ut,vs,vt] = [x,y,z,w]$.  We will define $m'$ to be the projection from the line $l \isom \proj{1}$ given by the points $[0,l_1,l_2,0]$.  This will define a map outside of $Q \intersect l = \{[0,0,1,0],[0,1,0,0] \}$ to $L \isom \proj{1} = \{ [L_1,0,0,L_2] \} $.  The projection in the coordinates of $\proj{1} \times \proj{1}$ is $([u,v][s,t]) \mapsto [ua,vb]$. We note immediately that on the open set $\C^\ast \times \C^\ast \rightarrow \C^\ast$ this map is exactly the multiplication map on $\C^\ast$ where $\C^\ast \hookrightarrow \proj{1}$ as the set of points $[u,v]$ with both u and v not equal to zero. Write $0$ for the point $[0,1]$ and $\infinity$ for $[1,0]$. We have an extension of $m'$ to the points $m'(0,0) = m'([0,1],[0,1]) = [0,1]$ and $m'(\infinity , \infinity)=m'([1,0],[1,0]) = [1,0]$.  

Now we want to extend to the points in $Q\intersect l$ which are the image of $(0,\infinity) = ([0,1],[1,0])$ and $(\infinity, 0) = ([1,0],[0,1])$.  Look at the closure of the inverse image of a point $[L_1,L_2]$.  We have $m'^{-1}([L_1,L_2]) =  \{[\xi L_1, y,z, \xi L_2]\ | \xi \neq 0\}$.  Intersecting with the quadric we have $\{L_2 x = L_1 w\}\intersect \{xw-yz = 0\}$, defining a family of $(1,1)$ conics on the quadric surface.  This pencil has two fixed points at $Q \intersect l$.  Near one of them, say $[0,1,0,0]$ we take the coordinates given by $y \neq 0$ have as defining equations$\{L_2 X = L_1W, XW=Z\}$ from which we can see explicitly the proceeding claim, that each inverse image set goes through the origin and is distinguished by its tangent slope.  Further the family of conics degenerates at $[L_1,L_2]=0$ or $\infinity$ to the two lines in the ruling.

Write $\res{R} \rightarrow \proj{1} \times \proj{1}$ for the blowup of $\proj{1} \times \proj{1}$ at the set $\{[0,1,0,0], [0,0,1,0]\}$.  We get a map $\res{m}:B \rightarrow \proj{1}$ by sending each point of the exceptional divisors to the unique point in $\proj{1}$ corresponding to the cubic passing through that point. We have a symmetrical situation at the other point $[0,0,1,0]$ so we can glue the two exceptional divisors of $R_{0,\infinity}$ to each other. Further by gluing the strict transform of the two degeneracy loci $0 \times \proj{1} \cup \proj{1} \times 0 \sim \infinity \times \proj{1} \cup \proj{1} \times \infinity$ we get a lifting of the gluing map $\proj{1} \times \proj{1} \stackrel{\fname{glue}}{\longrightarrow} N \times N$. This gives the commutative diagram
\begin{diagram}
	\res{R} 				&  	\rTo^{\fname{glue}}	& 	R			& \rTo			& \proj{1}		\\
	\dTo 					&						&   \dTo^q  	& \rdTo(2,2)_m	&	\dTo		\\
	\proj{1} \times \proj{1}&   \rTo^{\fname{glue}}	&   N \times N	& \rDashto^m	&	N			
\end{diagram}
giving the extension of the multiplication map.  We can also see that R is just the blowup of the \{singular point of N\}$\times$\{singular point of N\} in the product.
\end{proof}

Now consider a family of elliptic curves over a disk $X \stackrel{\pi}{\rightarrow} D$ degenerating to a nodal cubic over the origin.  We want to extend the multiplication map 
\begin{diagram}
X \times_D X & \rDashto^m & X  
\end{diagram}
In local coordinates the degeneration in the fiber product looks like $\{xy=t=uv\} \subset \C^4$ where t is a coordinate in $D$.  The only singularity of the fiber product is at the point $t=0$ where we have the cone over the Quadric.  There are two choices of small resolution corresponding to a small resolution formed by blowing up a plane in either ruling of the quadric.  One of these choices will correspond to the extension of the multiplication map defined above ( It can be shown that the other corresponds to an extension of the division map).

\subsection{Extension of Multiplication for V}

We proceed in a similar fashion as in the previous case.  The main difficulty in the case for surfaces is that the fiber product of our family with itself is singular along a surface, not at a point but is otherwise analogous.  First we recall some facts about our singular fibers.  

The singular fibers of $\pi$ consist of eight translation scrolls, $T_i$.  A translation scroll is defined by an elliptic curve $E$ embedded as a curve of degree 8 in $\proj{7}$ and a translation element $\rho$.
\begin{eqnarray}\label{DefTranslationScroll}
	T_{E,\rho} = \union_{x\in E} <x,x+\rho > 
\end{eqnarray}
Where $<x,x+\rho >$ is the line in $\proj{7}$ spanned by the point $x$ and $x+\rho$.

A translation scroll is topologically equivalent to $E \times N$ where E is an elliptic curve and N is a nodal cubic. It is singular along the curve E.  In our case, $E \hookrightarrow \proj{7}$ is a curve of degree eight. The smooth locus is a Zariski open set that is a group scheme G (see \cite{HulekKahnWeintraub}).  The group G is given be the short exact sequence
\begin{diagram}
	0 & \rTo & \C^\ast & \rTo & G & \rTo^n & E & \rTo & 0
\end{diagram}
% \begin{figure}
% \centering
% \includegraphics[height=4cm]{graphics/trans_scroll.pdf}\caption{Singular Fibers of $\pi$}
% \label{f33}
% \end{figure}

Write $B^0 \hookrightarrow b \isom \proj{1}$ for the locus of $\pi$ with smooth fibers, $V^0 = \pi^{-1}(B^0)$.   On this locus there is a well defined multiplication map $m: V^0 \times_B^0 V^0 \longrightarrow V$ given by $m(x,y) = x+y$.  This gives a rational map
\begin{diagram}
	V \times_B V & \rDashto^m & V
\end{diagram}
We would like to extend this map to all of $V \times_B V$.

\begin{proposition} For a choice of small resolution $\res{V \times_B V} \longrightarrow V \times_B V$ the multiplication map extends to a map 
\begin{diagram}
\res{V \times_B V} & \rTo^{\res{m}} & V
\end{diagram}
\end{proposition}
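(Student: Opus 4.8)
The plan is to reduce the surface case to the one-dimensional picture reviewed above, carried out in families over the singular elliptic curve $E$. First I would pin down the local analytic structure of $V \times_B V$ near the singular fibers. Since $V$ is smooth while $\pi$ acquires a node in the ruling direction of the scroll along $E_i$, near a point of $E_i$ the total space $V$ has coordinates $(x,y,e)$, with $e$ the coordinate along $E_i$ and base parameter $t = xy$; the special fiber $\{t=0\}$ is the nodal degeneration $\{xy=0\}$ in the $(x,y)$-plane, exactly as in the one-dimensional model $\{xy=t\}$. Taking a second copy with coordinates $(u,v,f)$ and imposing $t = xy = uv$, the fiber product acquires the local model
\[
	\{ xy = uv \} \times \Delta_e \times \Delta_f,
\]
that is, the affine cone over the quadric $\proj{1} \times \proj{1}$ times a two-dimensional disk in the $(e,f)$-directions. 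This exhibits $V \times_B V$ as singular precisely along the surface $\{x=y=u=v=0\}$ parametrized by $(e,f)$, a two-parameter family of ordinary double points; this is the sense in which the fiber product is singular along a surface.

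Next I would resolve. As in the curve case, the conifold $\{xy=uv\}$ admits two small resolutions, obtained by blowing up one of the two planes in the cone (a Weil divisor coming from a ruling of the quadric); performing this blow-up consistently over the whole $(e,f)$-surface of nodes produces a small resolution $\res{V \times_B V} \longrightarrow V \times_B V$ with smooth total space. The two rulings give two resolutions, and---exactly as remarked for the degenerating elliptic family---one corresponds to multiplication and the other to division, so I would fix the ruling that matches multiplication.

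It then remains to check that $m$ extends to a morphism $\res{m}: \res{V \times_B V} \longrightarrow V$ on this resolution. On $V^0 \times_B V^0$ the map $m$ is the fiberwise group law, and on the smooth locus of each scroll this law is governed by the group scheme $0 \to \C^\ast \to G \to E \to 0$. In the ruling-fiber direction the $\C^\ast$ degenerates to a node just as in the one-dimensional situation, so fiberwise over $E$ the extension of $m$ is literally the extension constructed for the nodal cubic: the small resolution replaces each node by its $\proj{1}$ of tangent directions, and $\res{m}$ sends each such direction to the limiting product already computed in the curve case. Patching these fiberwise extensions over $E$ yields the desired morphism.

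I expect the main obstacle to be the globalization. The local model and the fiberwise extension are routine once the conifold picture is in place, but one must produce a \emph{single} small resolution valid along the entire surface of ordinary double points---and across all eight singular fibers---with a globally consistent choice of ruling, and then verify that the resulting $\res{V \times_B V}$ is an honest small resolution rather than merely a local analytic one, on which $\res{m}$ is regular. Closely tied to this is confirming that the chosen ruling is the multiplication resolution rather than the division one, which amounts to matching the local coordinates $(x,y)$ to the group law via the extension $0 \to \C^\ast \to G \to E \to 0$.
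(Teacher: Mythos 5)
Your local analysis matches the paper's: near a singular fiber the product is described by $\{x_1y_1=t=x_2y_2\}$, so at $t=0$ one finds the conifold $x_1y_1=x_2y_2$ spread along the surface $E_{\rho}\times E_{\rho}$; the two small resolutions correspond to the two rulings of the quadric; and fiberwise over the elliptic curve the extension of $m$ is literally the nodal-cubic construction (the paper carries this out on the normalization $S$ of each scroll, extending $m'$ away from $C_0\times C_\infty$ and $C_\infty\times C_0$ via the group scheme $0\to\C^\ast\to G\to E\to 0$, then blowing up those two loci and gluing). So far, so good.

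The gap is precisely the step you yourself flag as ``the main obstacle'': producing a single algebraic small resolution with a globally consistent choice of ruling, on which $\res{m}$ is actually a morphism. This is not a routine patching problem --- a family of ordinary double points along a surface need not admit a global algebraic small resolution at all, and nothing in your argument excludes monodromy along $E_{\rho}\times E_{\rho}$ (or discrepancies between the eight singular fibers) interchanging the two rulings. The paper's missing idea is to take the closure of the graph of the multiplication map,
\[
\Gamma_m \;=\; \overline{\{(v_1,v_2,v)\in V\times_B V\times_B V \;\mid\; m(v_1,v_2)=v\}},
\]
which is automatically a projective variety. The local computation then identifies the projection $\Gamma_m\to V\times_B V$ as a small resolution whose exceptional fibers are exactly the $\proj{1}$'s of the ``multiplication'' ruling, and the projection to the third factor \emph{is} $\res{m}$, regular by construction. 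This one global object resolves all three of your concerns simultaneously: existence of an honest algebraic small resolution, global consistency of the choice of ruling, and regularity of the extension. Without it (or some substitute, e.g.\ exhibiting a global Cartier or Weil divisor through the singular surface to blow up), your proposal establishes only the local-analytic picture and does not yet prove the proposition.
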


\begin{proof} 
In what follows we will make use of the following commutative diagram where we define $q$ and $\res{m}$ at an appropriate place.
\begin{diagram}
 		&							&	\res{V_1 \times_B V_2}	&  \rTo^{\res{m}}			& V		\\
		& \ldTo(2,2)^{\res{p}_2}	&				\dTo>q 		&  \rdTo(2,2)^{\res{p}_2}   &		\\ 
	V	& \lTo^{p_1}				&		V_1 \times_B V_2    & \rTo^{p_2}    			& V		\\
		& \rdTo(2,2)_{\pi_1}		&     \dTo>{\pi}            & \ldTo(2,2)>{\pi_2} 		&		\\
		&							&       B                   &                           &		\\
\end{diagram}

Consider the closure of the graph of the multiplication map
\begin{displaymath}
	\Gamma_m = \overline{\{(v_1,v_2,v) \in V\times_B V \times_B V \; | \; m(v_1,v_2) = v\}}
\end{displaymath}
which is a projective variety since it's a subvariety of $V \times_B V \times_B V$. By looking locally at the points where the multiplication map fails we will show that there is a small resolution of the product $\res{V_1 \times V_2} \stackrel{q}{\longrightarrow}V \times_B V $.  Then since $\res{V_1 \times V_2} \subset \Gamma_m$ we will have our projective extension of the multiplication map

We describe the extension of the multiplication map in a discussion running parallel to the case of the nodal cubic.  To each translation scroll $T_i$ with defining elliptic curve $E_{\rho_i}$ we have the projective bundle $S_i$ that is its normalization.  Explicitly, to each point $x$ of $E_{\rho_i}$ we associate the line in $\proj{7}$ passing through $x$ and the point $x+\rho$ for some fixed $\rho$.  $S_i$ is a sub projective bundle of the trivial bundle $\proj{7} \times E_{\rho_i}$.  We will drop the subscript i for clarity.

This bundle has two natural sections over $E_{\rho}$, the zero section $\sigma_0$ given by the intersection with $E$ and the infinity section, $\sigma_\infinity$ given by the other intersection point with the curve $E_{\rho}$. Call the images of these sections $C_0$ and $C_\infinity$ respectively. The map $S \rightarrow T$ is given by gluing the point of $C_\infinity$ over the point $x\in E_\rho$ to the point $x + \rho$.  Viewing $S \subset  \proj{7} \times E_{\rho}$ with projections onto the first and second factors, the gluing map is just projection on to the first factor $S \stackrel{\fname{glue}}{\longrightarrow} \proj{7}$.

Look at the group $G \subset S$ and the maps $G \times G \stackrel{m'}{\longrightarrow} G$. This gives the rational map covering the inclusion of G into T
\begin{diagram}
	S \times S	&	\rDashto^{m'} 	& 	S	\\
	\dTo		&					& \dTo	\\
	T \times T	&	\rDashto^{m}	&  T
\end{diagram}
We can extend this map to $S \times S \backslash \{C_0 \times C_\infinity, \; C_\infinity \times C_0\} $ by taking $m'(\sigma_0(x),s) = \sigma_0(x+n(s))$, where $n:S \rightarrow E$ is the structure map of the projective bundle which corresponds to the group projection $n:G \rightarrow E$.  In a similar way we can define $m'(\sigma_\infinity(x),s) = \sigma_\infinity (x+n(s))$.  

This leaves the sets $C_0 \times C_\infinity$ and $C_\infinity \times C_0$ in $S \times S$ where $m'$ isn't defined. Taking the projection onto the base, we have the inverse image is the translated diagonal, i.e. in the following diagram
\begin{diagram}
	S \times S 	& \rTo^{n \times n} & E \times E	\\
	\dDashto^{m'}	&					&	\dTo^m		\\
	S			&	\rTo^n			&	E
\end{diagram}
The inverse image of a point $x\in E$ is $m^{-1}(x) = \{ (e,f) \in E \times E \; | \; e+f = x\} = \{ (e,x-e) \in E \times E\}$.  Locally on the base then, we can look at the inverse image and see that we have a line crossed with the inverse image from the nodal case.  Taking the blowup along $C_0 \times C_\infinity$ and $C_\infinity \times C_0$ we get get $\res{R} \rightarrow S \times S$ and a morphism $m'':\res{R}\rightarrow S$.  Gluing the two exceptional loci and the strict transform in R of the points $\sigma_0(x) \times S$ with $\sigma_\infinity (x + \rho) \times S$ we get a variety $R$ filling the diagram
\begin{diagram}
	\res{R} 	&  	\rTo^{\fname{glue}}	& 	R			& \rTo			& S		\\
	\dTo 		&						&   \dTo^q  	& \rdTo(2,2)_m	& \dTo	\\
	S \times S	&   \rTo^{\fname{glue}}	&   T \times T	& \rDashto^m	& T			
\end{diagram}

We need to place the above analysis in the context of our fibration.  Looking in the fiber product we can describe the singularity as follows.  Take $t$ to be a coordinate on $R$, $s_1$ a coordinate along $E_{\rho_i}$ the elliptic curve forming the singular locus of the singular fiber.  Near the singular fiber we describe $V_1$ as a subset of $\C^4$ given by $\{(x_1,y_1,s_1,t) \; | \; x_1y_1=t\}$.  Doing similarly for $V_2$ we get our local description of the fiber product near the singularity as
\begin{displaymath}
	\{ ((x_1,y_1,s_1),(x_2,y_2,s_2),t) \subset \C^7 \; | \; x_1y_1=t=x_2y_2 \}
\end{displaymath}
We see that the fiber at $t=0$ has as it's singular locus the cone $x_1y_1=x_2y_2$ in $\C^4$ at every point of the plane $(s_1,s_2)$. Just as in the nodal case there are two choices of small resolution corresponding to the two rulings of the quadric.  The global projective resolution given by the graph $\Gamma_m$ makes a choice of one of these. 

Taking the blowup along a smooth divisor containing the plane of singularities $(s_1,s_2)$ we get the small resolution $\res{V_1 \times_B V_2}$.  The exceptional locus of the small resolution of the cone is a $\proj{1}$, so in this case we get a projective bundle over the plane $(s_1,s_2)$ locally, and globally a $\proj{1}$ bundle on $E_{\rho_i} \times E_{\rho_i}$.
\end{proof}

\section{Some Details On The Singular Fibers $T_i$}

We make some elementary remarks based on our description of T and the extension of the multiplication map.

\begin{proposition}
The pullback of $\OO(H)$ to $S$ is $\OO_{S}(1) \tensor p^\ast \OO_E(4 pt.)$ and the cohomology is given by
\begin{center}
	$\begin{array}{cccc} H^0(S,H) = 8, & H^1(S,H) = H^2(S,H) = 0 \end{array}$ 
\end{center}
\end{proposition}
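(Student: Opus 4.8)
The plan is to use the $\mb{P}(\mathcal{F})$-structure $p : S \to E$ to push the whole question down to the elliptic curve $E$, where it becomes essentially a one-line Riemann--Roch computation. First I would identify the rank two bundle $\mathcal{F}$ with $S \isom \mb{P}(\mathcal{F})$ by restricting $\fname{glue}^\ast \OO(H)$ to the two natural sections and to a ruling. Since $\fname{glue}$ is the projection to $\proj{7}$, on the zero section $C_0$ it is the degree eight embedding $E \hookrightarrow \proj{7}$, so $\fname{glue}^\ast \OO(H)|_{C_0} \isom L$ with $\deg L = 8$; on the infinity section $C_\infty$ it is the translate $\fname{glue}|_{C_\infty} = \phi \circ t_\rho$, giving $t_\rho^\ast L$, again of degree eight; and it restricts to $\OO_{\proj{1}}(1)$ on each ruling. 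Because $C_0$ and $C_\infty$ are \emph{disjoint} sections, $\mathcal{F}$ splits as a sum of two line bundles recording these restrictions, and normalizing the relative hyperplane class $\OO_S(1)$ to have degree four on each section produces exactly the stated identity $\fname{glue}^\ast \OO(H) \isom \OO_S(1) \tensor p^\ast \OO_E(4\,\mathrm{pt})$, the $4$ being half of $\deg L = 8$.

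For the cohomology I would push forward along $p$. The bundle has degree one on every ruling, so $R^1 p_\ast \fname{glue}^\ast \OO(H) = 0$, the Leray spectral sequence collapses, and $H^i(S, \fname{glue}^\ast \OO(H)) \isom H^i(E, \mathcal{G})$ with $\mathcal{G} := p_\ast \fname{glue}^\ast \OO(H)$ a rank two bundle on $E$ obtained from $\mathcal{F}$ by the twist above. Once $\mathcal{G}$ has positive-degree summands the vanishing statements are immediate: $H^2 = 0$ because $E$ is a curve, and Serre duality on the elliptic curve gives $H^1(E, \mathcal{G}) \isom H^0(E, \mathcal{G}^\vee)^\ast = 0$, since $\mathcal{G}^\vee$ has negative degree and hence no sections. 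Riemann--Roch on $E$ then collapses the remaining group to $h^0 = \euler(\mathcal{G}) = \deg \mathcal{G}$, so the entire proposition reduces to a single degree count.

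The step I expect to cost the most care is pinning down $\mathcal{F}$ precisely and getting the degree bookkeeping to land on the value $8$ dictated by the smooth fibers. Concretely this means identifying the extension class of the semiabelian group $\ses{\C^\ast}{G}{E}$ in $\Pic^0(E)$ and its dependence on $\rho$, and then distinguishing carefully cohomology on the normalization $S$ from cohomology on the singular fiber $T$ itself, since the two differ by the contribution of the double curve $E$. I would control this difference with the conductor sequence $\ses{\OO_T}{\nu_\ast \OO_S}{\OO_E}$, where $\nu : S \to T$ is $\fname{glue}$ onto its image; twisting by $\OO(H)$ and taking Euler characteristics gives $\euler(T, \OO_T(1)) = \euler(S, \fname{glue}^\ast \OO(H)) - \euler(E, L)$, which is how the section count is forced to agree with $\tfrac{1}{2} H^2 = 8$ on a smooth fiber. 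Verifying that this comparison, together with the vanishing already established, yields the stated $h^0 = 8$ and $h^1 = h^2 = 0$ is the crux of the argument.
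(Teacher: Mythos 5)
Your overall architecture---determine the class of $\fname{glue}^\ast\OO(H)$ on the ruled surface from its restrictions to the two sections and to a ruling, then compute cohomology by pushing down to $E$ via Leray---runs parallel to the paper's proof, which pins down the class by intersection numbers, uses the same Leray collapse to kill $H^2$, and extracts $H^1$ from an Euler characteristic. The one structural difference is the source of the section count: the paper takes $h^0=8$ as given from the embedding $S\to T\subset\proj{7}$ and uses $\chi=8$ only to conclude $h^1=0$, whereas you propose to read $h^0$ off as $\deg\mathcal{G}$. That makes your argument entirely hostage to the degree bookkeeping, and that is exactly where it breaks down.

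Concretely: you correctly observe that $\fname{glue}$ restricted to $C_0$ is the degree-eight embedding of $E$ and restricted to $C_\infty$ is its translate, so $\fname{glue}^\ast\OO(H)$ has degree $8$ on \emph{each} section and degree $1$ on each ruling. But for a line bundle of relative degree one on $\mathbb{P}(M_1\oplus M_2)$ the pushforward to $E$ has degree equal to the sum of the degrees on the two disjoint sections; with your restrictions this gives $\deg\mathcal{G}=16$, and your own Riemann--Roch step then outputs $h^0(S)=16$, not $8$. No ``normalization of $\OO_S(1)$'' reconciles degree $8$ on both sections with the stated bundle $\OO_S(1)\tensor p^\ast\OO_E(4\,pt)$ having a degree-$8$ pushforward---the phrase ``the $4$ being half of $\deg L=8$'' is doing no work. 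The paper arrives at the coefficient $4$ differently, from $n^\ast H.\,n^\ast E=8$ together with $n^{-1}(E)=C_0+C_\infty\equiv 2C_0$, i.e.\ $n^\ast H.C_0=4$; your direct restriction argument gives $n^\ast H.C_0=8$. These two computations of the same number are incompatible (note also that $n^\ast H.C_0=8$ is what $({n^\ast H})^2=H^2.A=16=2\,n^\ast H.C_0$ demands), and a complete proof must confront and resolve that discrepancy rather than assert both. Finally, the conductor sequence cannot close the gap: $\euler(T,\OO_T(1))=\euler(S,\fname{glue}^\ast\OO(H))-\euler(E,L)$ is a statement about the non-normal surface $T$, while the proposition concerns $S$; far from forcing $h^0(S)=8$, it gives $\euler(S)=\euler(T)+\euler(E,L)=8+8=16$, corroborating the larger answer. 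As written, your proposal establishes the vanishing of $H^1$ and $H^2$ but not the claimed value of $H^0$ or the claimed formula for the pullback.
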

\begin{proof}
% Look at the Mayer-Vietoris sequence
% \begin{diagram}
% 	C_0 \cup C_\infty & \rTo & S_1 \amalg S_2 & \rTo F 
% \end{diagram}
% We have the long exact sequence
% \begin{diagram}
% 	0         &     \rTo     &  H^0(S_1,H) \dsum H^0(S_2,H)  & \rTo &    H^0(C_0 \cup C_\infty, H) \\
% 	\;\;\;\;  &    \rTo      & H^1(F,H)                       & \rTo &    H^1(S_1,H) \dsum H^1(S_2,H) \\
% \end{diagram}
Recall the standard results (see \cite{BeauvilleSurf} Proposition III.18) that

\begin{eqnarray*}
	Pic(S) = p_i^\ast Pic(E_{\rho}) \dsum \Z C_0	&	&	H^2(S,\Z ) = \Z C_0 \dsum \Z F. 
\end{eqnarray*}

Consider $n^\ast H$.  Write $H = aC_0 + bl$, then since $n^\ast H . n^\ast E = n^\ast H . 2C_0 = n^\ast (H.E) = n^\ast (8[pt]) = 8$ we conclude that $H.C_0 = 4$.  Also, $n^\ast H . n^\ast l = n^\ast H . F = n^\ast (H.l) = n^\ast(1[pt]) = 1$.  Hence we get $H = C_0 + 4F$.  
For the cohomology we know that $H^0(S_1,n^*H) = 8$ since it is the polarization given the map $S \rightarrow T \subset \proj{7}$.  Look at the Leray spectral sequence for a fibration with $E^{p,q}_2 = H^{p}(E,R^q\pi_* H) \Rightarrow H^{p+q}(S,H)$.  Since $n^*H.F=1$ we have by the base change and cohomology theorem that for each fiber over a point $y\in E$, $(R^1\pi_* H)_y \isom H^1(\proj{1},\OO_{\proj{1}}(1)) = 0$.  So in particular the spectral sequence degenerates and we get $H^i(S,H) \isom H^i(E,\pi_*H)$.  We can conclude immediately that $H^2(S,H)=0$ since $E$ is a curve.  To calculate that $H^1(S,H)=0$ we consider the Riemann-Roch formula $\euler (H) = deg(ch(H).td(S))_2$.  By standard results (see \cite{BeauvilleSurf} chapter 3) we have that $td(S) = 1 + C_0$, further $ch(H) = 1 + (C_0 + 4F) + 4[pt]$ so $\euler (H) = 8$ and we conclude that $H^1(S,H) = 0$.
\end{proof}

\begin{proposition}\label{multiplication} The n-fold multiplication map $m:T \times ... \times T \rightarrow T$ is defined on all tuples $(y,a_1,a_{n-1})$ where $y$ is any point of $T$ and $a_1,...,a_{n-1}$ are smooth points.  
\end{proposition}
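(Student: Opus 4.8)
The plan is to reduce the $n$-fold multiplication to repeated binary multiplication and to show that adding a smooth point is an everywhere-regular operation on $T$. Since the smooth locus $G$ is a group, the sum $g = a_1 + \cdots + a_{n-1}$ of the smooth arguments is again a smooth point, and on the dense open set where every factor is smooth the map sends $(y,a_1,\ldots,a_{n-1})$ to the translate $\lambda_g(y) = y + g$. It therefore suffices to prove that for each $g \in G$ the translation $\lambda_g \colon T \to T$, $y \mapsto y+g$, is a regular automorphism, including over the singular locus, and then to realize the $n$-fold map as an iterate of such translations.

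First I would isolate the binary statement: the multiplication $m \colon T \times T \to T$ (a priori only a rational map) is regular at every pair $(t,a)$ whose second factor $a$ is smooth. Passing to the normalization $S$, a smooth $a$ lifts to a unique point $s \in G = S \setminus (C_0 \cup C_\infty)$, while $t$ lifts to some point lying over it. The only indeterminacy of the lifted map $m'$ constructed above was on $C_0 \times C_\infty$ and $C_\infty \times C_0$; since $s$ avoids $C_0$ and $C_\infty$, the pair never meets this locus, and the explicit formulas $m'(\sigma_0(x),s) = \sigma_0(x+n(s))$ and $m'(\sigma_\infty(x),s) = \sigma_\infty(x+n(s))$ produce a regular value. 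Reading these off, $m(t,a)$ lands on the image of $C_0 \cup C_\infty$, hence is singular, exactly when $t$ is singular, and is smooth when $t$ is smooth.

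Next I would verify that this value descends unambiguously to $T$ when $t$ is singular, which is exactly the regularity of $\lambda_g$ along the singular locus. A singular $t$ has two lifts to $S$, one on $C_0$ and one on $C_\infty$, interchanged by the gluing $\sigma_0(x) \sim \sigma_\infty(x-\rho)$ that defines $S \to T$. The two formulas carry these lifts to $\sigma_0(x+n(s))$ and $\sigma_\infty((x-\rho)+n(s))$, and the same gluing identifies $\sigma_0(x+n(s))$ with $\sigma_\infty((x+n(s))-\rho)$; the two outputs therefore coincide in $T$, so $\lambda_g$ is well defined and regular there, with inverse $\lambda_{-g}$. Granting the binary case, the proposition follows by induction: writing $z_1 = m(y,a_1)$ and $z_k = m(z_{k-1},a_k)$, each step multiplies an arbitrary point $z_{k-1}$ by the smooth point $a_k$ and is regular by the binary statement, so the composite is regular at $(y,a_1,\ldots,a_{n-1})$; associativity on the dense smooth locus then identifies it with $\lambda_{a_1+\cdots+a_{n-1}}(y)$.

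I expect the main obstacle to be precisely this descent check at singular $t$: one must confirm that the two branches of $m'$, applied to the two glued lifts on $C_0$ and $C_\infty$, land on points that the $\rho$-shifted gluing again identifies, namely $\sigma_0(x+n(s)) \sim \sigma_\infty((x-\rho)+n(s))$. Once this compatibility is in place, the decisive structural point---that multiplying by a smooth element can never drive the pair of lifts into the forbidden $C_0 \times C_\infty$ or $C_\infty \times C_0$ locus, since a smooth factor lifts into $G$---makes the induction routine and the proposition immediate.
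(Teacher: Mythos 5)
Your proposal is correct and takes essentially the same route as the paper: the paper's proof is a one-line appeal to the fact that the lifted map $m'$ was already extended to all of $S \times S$ away from $C_0 \times C_\infty$ and $C_\infty \times C_0$, and a smooth second argument lifts into $G$, hence never meets that locus. You simply unpack this in detail, adding the explicit descent check along the gluing $\sigma_0(x) \sim \sigma_\infty(x-\rho)$ and the induction from the binary to the $n$-fold case, both of which the paper leaves implicit.
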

\begin{proof}
This follows from the construction and our first extension of $m$ to the the points in $S \times S$ that are not in the locus $C_0 \times C_\infinity$ and $C_\infinity \times C_0$. 
\end{proof}

\begin{corollary}\label{TPic}For line bundles $\mathcal{L}$ on $T$ and smooth points $a,b \in T$ we have
\begin{eqnarray}
	t^\ast_{a + b}\mathcal{L} \tensor \mathcal{L} = t^\ast_a \mathcal{L} \tensor t^\ast_b \mathcal{L}
\end{eqnarray}
Where $t_a = m(a,\_)$
\end{corollary}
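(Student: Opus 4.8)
This identity is the \emph{Theorem of the Square} for the translation scroll $T$, and my plan is to deduce it from the corresponding \emph{Theorem of the Cube}. The first step is to rephrase the assertion as the triviality of the single line bundle
\[
 M = t^\ast_{a+b}\mathcal{L} \tensor \mathcal{L} \tensor (t^\ast_a \mathcal{L})^{-1} \tensor (t^\ast_b \mathcal{L})^{-1}
\]
on $T$; note that $t_a$, $t_b$ and $t_{a+b}$ are genuine automorphisms of $T$ (with inverses $t_{-a}$, etc.) precisely because $a,b$ are smooth, by Proposition \ref{multiplication}.

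The main route is to form Mumford's cube bundle on the triple product $T \times T \times T$,
\[
 \Theta = m_{123}^\ast \mathcal{L} \tensor (m_{12}^\ast \mathcal{L})^{-1} \tensor (m_{13}^\ast \mathcal{L})^{-1} \tensor (m_{23}^\ast \mathcal{L})^{-1} \tensor p_1^\ast \mathcal{L} \tensor p_2^\ast \mathcal{L} \tensor p_3^\ast \mathcal{L},
\]
where $p_i$ are the projections and $m_I$ multiplies the coordinates indexed by $I$. The point of invoking Proposition \ref{multiplication} is that on every locus I actually restrict to, at most one of the three arguments is singular, so each $m_I$ is an honest morphism there. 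I would then show $\Theta \isom \OO$ by restricting to the three slices through the identity $e$ (a smooth point), namely $\{e\}\times T \times T$ and its two partners, on each of which a short direct cancellation makes the restriction trivial, and then applying the seesaw principle together with the rigidity lemma. Finally, restricting the trivial $\Theta$ to $\{a\}\times\{b\}\times T$ collapses the factors $m_{12}^\ast\mathcal{L}$, $p_1^\ast\mathcal{L}$, $p_2^\ast\mathcal{L}$ to constant (hence trivial) bundles in the varying coordinate while sending $m_{123}^\ast\mathcal{L} \to t^\ast_{a+b}\mathcal{L}$, $m_{13}^\ast\mathcal{L}\to t_a^\ast\mathcal{L}$, $m_{23}^\ast\mathcal{L}\to t_b^\ast\mathcal{L}$ and $p_3^\ast\mathcal{L}\to\mathcal{L}$, which yields exactly $M \isom \OO_T$.

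The main obstacle is that seesaw and rigidity are classically stated for smooth (or normal) varieties, whereas $T$ is singular along $E$. I would either verify that their hypotheses hold for $T$ directly (it is complete, connected and reduced with $h^0(\OO_T)=1$), or pass to the normalization $\nu:S\to T$, where the Picard group is understood. Along the second route, each $t_a$ lifts to an automorphism $\tilde t_a$ of $S$ with $\nu\tilde t_a = t_a\nu$; using $\Pic(S)=p^\ast\Pic(E)\dsum\Z C_0$ and writing $\nu^\ast\mathcal{L}=\alpha C_0 + p^\ast\delta$, the automorphism $\tilde t_a$ fixes $C_0$ and covers translation by $n(a)$ on $E$, so that $\nu^\ast M = p^\ast\!\big(t_{n(a+b)}^\ast\delta \tensor \delta \tensor (t_{n(a)}^\ast\delta)^{-1}\tensor(t_{n(b)}^\ast\delta)^{-1}\big)$. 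Since $n:G\to E$ is a homomorphism, the classical Theorem of the Square on the elliptic curve $E$ gives $\nu^\ast M \isom \OO_S$. The residual difficulty — which I expect to be the real work either way — is descent: $\ker(\nu^\ast:\Pic(T)\to\Pic(S))\isom\C^\ast$ records the gluing parameter along the node $E$, so one must check separately that $M$ carries trivial gluing data. I would argue this by observing that $M$ is assembled functorially from $\mathcal{L}$ by translations by smooth group elements, which respect the nodal identification, forcing the gluing parameter of $M$ to be the analogous Square expression in the group $\C^\ast$ and hence trivial.
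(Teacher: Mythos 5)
Your proof takes essentially the same route as the paper's: both reduce the statement to the theorem-of-the-cube identity $m^\ast\mathcal{L}\tensor p_1^\ast\mathcal{L}\tensor p_2^\ast\mathcal{L}\tensor p_3^\ast\mathcal{L}\isom m_{12}^\ast\mathcal{L}\tensor m_{13}^\ast\mathcal{L}\tensor m_{23}^\ast\mathcal{L}$ on the locus of $T\times T\times T$ where the relevant multiplication maps are defined (via Proposition \ref{multiplication}), and then restrict to a slice in which two of the coordinates are frozen at the smooth points $a$ and $b$. The only difference is that the paper stops there, citing Huybrechts Chapter 9 for the cube identity as ``a standard result for Abelian varieties,'' whereas you go on to justify that identity on the singular $T$ itself; in doing so you correctly isolate the one point the paper elides, namely that seesaw and rigidity are classically stated for smooth varieties, and that after passing to the normalization $\nu: S \to T$ one must still control the $\C^\ast$ of gluing data along $E$ in $\ker(\nu^\ast:\Pic(T)\to\Pic(S))$. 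Your sketch for handling that last step --- the translations $t_a$, $t_b$, $t_{a+b}$ by smooth group elements are compatible with the nodal identification, so the gluing parameter of $M$ satisfies the same multiplicative square identity inside $\C^\ast$ and is therefore trivial --- is the right idea and supplies detail the paper's proof does not.
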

\begin{proof}
This follows the proof given in \cite{HuybrechtsFM} Chapter 9 and is a standard result for Abelian varieties.  In particular on the triple product $T \times T \times T$ for points $(y,a,b)$ with $a$ ,$b$ smooth points we have the three projection maps $\pi_i$ and the pairwise multiplication maps $m_{ij}$.  Take $m:T \times T \times T \rightarrow T$ to be the triple multiplication on points of the form $(y,a,b)$.  Then we have
\begin{displaymath}
	m^* \mathcal{L} \tensor \pi_1^* \mathcal{L} \tensor \pi_2^* \mathcal{L} \tensor \pi_3^* \mathcal{L} = m^*_{12} \mathcal{L} \tensor m^*_{1,3} \mathcal{L} \tensor m^*_{2,3} \mathcal{L}.
\end{displaymath}
Pulling this back via the map $T \rightarrow T \times T \times T$, $y \mapsto (y,a,b)$ we get our result  	
\end{proof}

% \begin{corollary}There is a group homomorphism from the points of $G$ to rank one torsion free sheaves on $G$ given by
% \begin{eqnarray*}
% 	G & \longrightarrow	& Pic^0(G)					\\
% 	x & \longmapsto		& t^*_x H \tensor H^{-1}
% \end{eqnarray*}
% \end{corollary}
% \begin{proof}
% 	The fact that this is a group homomorphism is exactly the statement of Corollary \ref{TPic}. To show that the image is a rank 1 torsion free sheaf note that XXXXXX
% \end{proof}

% \begin{corollary} There is a 64 to 1 group homomorphism from the smooth points $G \subset Y$ to 
% \begin{eqnarray*}
% 	G \subset T & \rightarrow	& Pic^0(T)	\\
% 	y			&	\mapsto		& t^* H \tensor H^{-1}
% \end{eqnarray*}
% \end{corollary}
% \begin{proof}
% The previous corollary tells us that 
% \end{proof}
% 
% \begin{proposition} The blowup of the singular set $E \times E \hookrightarrow T \times T$ consists of three projective bundles glued together along curv 
% \end{proposition}

In the following we explicitly see that the space of rank 1 torsion free sheaves on $T_i$ is identified with another translation scroll $\dual{T}_i$.

\section{The Poincare Sheaf on $V \times_B \dual{V}$}

Consider the sheaf on $V \times_B V$ given by
\begin{eqnarray*}
	\KQ = (q_\ast\res{m}^\ast H) \tensor p_1^\ast H^{-1} \tensor p_2^\ast H^{-1}
\end{eqnarray*}
Where $\res{m}$, $q$ are defined in the previous section.  We want the Poincare sheaf on $V \times_B \dual{V}$ to be the pullback of $\KQ$, i.e
\begin{eqnarray*}
	\KQ = (\id \times \phi_H^\ast) \poincare
\end{eqnarray*}
We show that such a sheaf exists by showing that $\KQ$ is $K(H)$ equivariant and descends to the quotient.

\begin{lemma}
	The action of the kernel of $\phi_H$, $K(H)$ on the fiber product $V\times_B V$ via $\id \times K(H)$  extends to an action on the small resolution.  Further it leaves $H$ invariant.
\end{lemma}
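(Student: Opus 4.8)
The plan is to build the lifted action directly from the closure of the graph $\Gamma_m \subset V \times_B V \times_B V$ that was used to construct the resolution, since this realizes $\res{V \times_B V}$ canonically and thereby finesses the only delicate point: the lift must preserve the chosen ruling (equivalently the chosen small resolution) rather than interchange the two. There are two things to establish, matching the two sentences of the statement: that $\id \times K(H)$ lifts to an automorphism of $\res{V \times_B V}$ covering it, and that $t_a^\ast H \cong H$ on $V$ for every $a \in K(H)$.

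For the first, recall from Proposition \ref{Dual} that each $a \in K(H)$ acts as an automorphism $t_a : V \to V$ over $B$, by fiberwise translation on the smooth fibers and by translation along $E_i$ on the singular fibers. I would define $\Phi_a = \id \times t_a \times t_a$ on the triple product $V \times_B V \times_B V$; this is manifestly a $K(H)$-action. On the dense open locus where $m$ is honest fiberwise addition one has $m(v_1, v_2 + a) = m(v_1,v_2) + a$, so $\Phi_a$ carries $\{(v_1,v_2,v_1+v_2)\}$ into itself; since $\Phi_a$ is a morphism and $\Gamma_m$ is the closure, $\Phi_a(\Gamma_m) \subseteq \Gamma_m$, and applying $\Phi_{-a}$ gives equality. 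As $\res{V \times_B V}$ sits inside $\Gamma_m$ with $q$ and $\res{m}$ the two projections to $V \times_B V$ and to $V$, the action restricts to it, giving $q \circ \Phi_a = (\id \times t_a) \circ q$ and $\res{m} \circ \Phi_a = t_a \circ \res{m}$. Thus the lifted action covers $\id \times K(H)$ and, as a bonus, makes $\res{m}$ equivariant, which is exactly what is needed to push $H$-invariance through $q_\ast \res{m}^\ast H$ in the sequel.

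For the second, I would argue at the level of $\Pic(V)$. Since $V$ is a Calabi--Yau threefold it has no $\Pic^0$, and since $V$ is simply connected (it is the universal cover of $\dual{V}$) $H^2(V,\Z)$ is torsion-free; hence $\Pic(V) = \mathrm{NS}(V) = \Z H \dsum \Z A$ using the generators of $H^2(V,\Z)$ recorded in Section 2. Because $t_a$ is an automorphism over $B$ it fixes the fiber class, $t_a^\ast A = A$, so $t_a^\ast H = H + \beta(a) A$ for some integer $\beta(a)$; restricting to a smooth fiber and using $a \in K(H) = \ker \phi_H$ (so $t_a^\ast H \cong H$ there) forces the coefficient of $H$ to be $1$. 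The theorem of the square (Corollary \ref{TPic}, extended to the singular fibers) shows $a \mapsto t_a^\ast H \tensor H^{-1}$ is a homomorphism, so $\beta : K(H) \to \Z$ is a homomorphism from a finite group to a torsion-free group and therefore vanishes; hence $t_a^\ast H \cong H$ on all of $V$.

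The genuine obstacle is the first part: a priori the two factors of the local cone $\{x_1y_1 = x_2y_2\}$ could be swapped by the translation, which would interchange the two small resolutions and block the lift. Presenting the resolution as $\Gamma_m$ removes this worry, since $\Gamma_m$ is canonically attached to $m$ and is preserved by any automorphism under which $m$ is equivariant, so no separate verification that the ruling is fixed is required. With both points in hand, substituting $q \circ \Phi_a = (\id \times t_a)\circ q$, $\res{m}\circ \Phi_a = t_a \circ \res{m}$, and $t_a^\ast H \cong H$ into the definition of $\KQ$ yields $(\id \times t_a)^\ast \KQ \cong \KQ$, the $K(H)$-equivariance used next to descend $\KQ$ to the Poincar\'e sheaf $\poincare$.
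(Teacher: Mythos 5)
Your proof is correct, but it reaches both conclusions by routes genuinely different from the paper's. For the lift of $\id \times K(H)$ to the small resolution, the paper argues directly on the exceptional locus: the translation acts on the singular set of $V \times_B V$ as $\id \times \{\text{pts.\ of order }8\}$, and the induced automorphism of $E \times E$ lifts to the total space of the $\proj{1}$-bundle resolving it. Your argument instead exploits the fact that $\res{V \times_B V}$ was realized inside the graph closure $\Gamma_m$: since $m(v_1, v_2 + a) = m(v_1,v_2) + a$ on the dense locus where $m$ is honest addition, $\Phi_a = \id \times t_a \times t_a$ preserves $\Gamma_m$ and hence restricts to the resolution. This is tidier on the one point the paper leaves implicit --- that the translation does not interchange the two rulings of the local cone $x_1y_1 = x_2y_2$ and hence does not swap the two small resolutions --- and it hands you equivariance of $\res{m}$ for free, which is what the descent of $\KQ$ actually uses. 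For the invariance of $H$, the paper has a one-line argument you did not use: $K(H)$ acts through the Heisenberg group $\Heis$, i.e.\ by linear automorphisms of $\proj{7}$, which fix $\OO(1)$ and hence fix $\OO(H)$ on $V_{8,y}$ and its small resolution. Your substitute --- $\Pic(V) = \Z H \oplus \Z A$ by simple connectedness and vanishing of $\Pic^0$, the fiber class is fixed, restriction to a smooth fiber pins the $H$-coefficient to $1$ since $a \in \ker\phi_{H_A}$, and the $A$-coefficient is a homomorphism from the finite group $K(H)$ to $\Z$ and so vanishes (here $t_{a+b}^\ast = t_b^\ast t_a^\ast$ already suffices; the theorem of the square is not needed) --- is sound but longer, and it leans on the simple connectedness of $V$ where the paper's argument needs nothing beyond linearity of the $\Heis$-action. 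Both halves of your argument would serve as a valid replacement for the paper's proof.
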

\begin{proof}
Elements of $K(H)$ act via translation by elements of order 8 in the elliptic curve, permuting the fibers, and permuting the 64 sections $e_i$.  This action extends to the singular set of $V \times_B V$ just acting as $\id \times \mbox{\{pts. of order 8\}}$.  The automorphism  of the base lifts to an automorphism of the total space of the resolution of $E\times E$, furthermore the action of $K(H)$ is induced by a linear automorphism of $\proj{7}$ and thus fixes $\OO(H)$.
\end{proof}

\begin{proposition} The relative Poincare sheaf, $\poincare$, defined above restricts on the smooth fibers to be the usual Poincare sheaf on $A\times\dA$.  Further, it is trivial when restricted to $\sigma \times \dual{V}$ and $V \times \dual{\sigma}$
\end{proposition}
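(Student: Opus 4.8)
\emph{Strategy.} Both assertions reduce to understanding the descent $\KQ=(\id\times\phi_H)^\ast\poincare$ over distinguished loci. Over the smooth locus $B^0\subset B$ the resolution is trivial, so $q$ is an isomorphism and $\res{m}=m$; hence on a smooth fiber $\KQ$ restricts to $m^\ast H_A\tensor p_1^\ast H_A^{-1}\tensor p_2^\ast H_A^{-1}$, the Mumford bundle on $A\times A$. By the classical identity (\cite{BirkenhackeLange}) this Mumford bundle equals $(\id\times\phi_{H_A})^\ast\poincare_A$, where $\poincare_A$ is the normalized Poincaré bundle on $A\times\dA$ and $\phi_{H_A}$ is the polarization isogeny, which by Proposition \ref{Dual} is exactly the restriction of $\phi_H$. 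The first assertion then amounts to checking that the $K(H)$-linearization of $\KQ$ restricts on each smooth fiber to the canonical theta-group (Heisenberg) linearization of the Mumford bundle; granting this, descent along an isogeny identifies $\poincare|_{A\times\dA}$ with the \emph{normalized} Poincaré bundle $\poincare_A$, i.e.\ the one trivial on $\{0\}\times\dA$ and on $A\times\{0\}$.

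\emph{Reduction of the vanishing on sections.} Let $\sigma$ be the zero section (an exceptional curve, so $\sigma\isom\proj{1}$ and $H\cdot\sigma=0$, whence $H|_\sigma\isom\OO_{\proj{1}}$), and let $\dual{\sigma}=\phi_H(\sigma)$ be the zero section of $\dual{V}$. By the normalization just established, $\poincare$ restricts trivially to $\{0\}\times\dA$ and to $A\times\{0\}$ on every smooth fiber; hence $\poincare|_{\sigma\times_B\dual{V}}$ and $\poincare|_{V\times_B\dual{\sigma}}$ are trivial on every fiber over $B^0$. Applying the relative seesaw principle to the proper, connected-fiber fibrations $\sigma\times_B\dual{V}\to B$ and $V\times_B\dual{\sigma}\to B$, I conclude $\poincare|_{\sigma\times_B\dual{V}}\isom\dual{\pi}^\ast N$ and $\poincare|_{V\times_B\dual{\sigma}}\isom\pi^\ast N'$ for line bundles $N,N'$ on $B$.

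\emph{Killing the base twist.} It remains to see $N\isom N'\isom\OO_B$, which I would do by restricting to the zero--zero section $\sigma\times_B\dual{\sigma}\isom B$. Over this locus $\id\times\phi_H$ is an isomorphism (both $\sigma$ and $\dual{\sigma}$ map isomorphically to $B$ and $\phi_H(\sigma)=\dual{\sigma}$), so the restriction of $\poincare$ there equals the corresponding restriction of $\KQ$. Since $(\sigma(b),\sigma(b))$ consists of smooth points, Proposition \ref{multiplication} shows $m$ is regular along $\sigma\times_B\sigma$ with $m(\sigma,\sigma)=\sigma$, so $q_\ast\res{m}^\ast H|_{\sigma\times_B\sigma}=H|_\sigma$ and
\[
	\KQ|_{\sigma\times_B\sigma}=H|_\sigma\tensor H^{-1}|_\sigma\tensor H^{-1}|_\sigma=H^{-1}|_\sigma\isom\OO.
\]
Restricting $\dual{\pi}^\ast N$ and $\pi^\ast N'$ to this section recovers $N$ and $N'$, so both are trivial and the two restrictions of $\poincare$ vanish.

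\emph{Main obstacle and a shortcut.} The genuine difficulty is the linearization bookkeeping in the first paragraph: a priori the fiberwise descent could differ from $\poincare_A$ by an element of $\ker(\phi_{H_A}^\ast)\isom\Z_8\times\Z_8$, and ruling this out requires that the linearization inherited from the Heisenberg action on $\OO(H)$ (the Lemma above) be the theta-group linearization for which the Mumford bundle descends to the \emph{normalized} Poincaré sheaf. For the restriction to $V\times_B\dual{\sigma}$ one can sidestep this altogether: $\phi_H^{-1}(\dual{\sigma})$ is the full $K(H)$-orbit of the $64$ disjoint sections $e_i$, on which $K(H)\isom\Z_8\times\Z_8$ acts simply transitively, so $\id\times\phi_H$ restricts there to a \emph{split} $K(H)$-cover and the descent is literally the restriction $\KQ|_{V\times_B\sigma}$ to one component, which the computation above shows is trivial with no character ambiguity.
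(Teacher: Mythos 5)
There is a genuine gap, and you have located it yourself: the ``linearization bookkeeping'' you defer in your first paragraph and revisit at the end is not a side issue but is the entire content of the proposition for two of its three sub-claims (the identification with the \emph{normalized} Poincar\'e bundle on the smooth fibers, and the triviality on $\sigma\times\dual{V}$). Your shortcut disposes only of $V\times_B\dual{\sigma}$, where $\id\times\phi_H$ restricts to a split cover. Over $\sigma\times_B\dual{V}$ the cover $\sigma\times_B V\to\sigma\times_B\dual{V}$ is connected, so even though $\KQ\rest_{\sigma\times_B V}\isom\OO$, its descent is the line bundle associated to whatever character of $K(H)$ the linearization induces there, and nothing in your argument pins that character down. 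Your seesaw reduction and the evaluation on $\sigma\times_B\dual{\sigma}$ are correct as far as they go (and are more explicit than the paper on this point), but they can only kill a twist pulled back from $B$; they are blind to an $8$-torsion twist along the fibers of $\dual{\pi}$, which is exactly the ambiguity at stake.

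What you are missing is that the linearization is not a datum to be checked against an independently given theta-group linearization; it is a choice, and the paper (following Mumford, \emph{Abelian Varieties}, p.~79) makes the choice so that the proposition holds by construction. Since $V\times_B V$ is connected and projective, each lift $\psi_k$ of $t_{(0,k)}$ to $\KQ$ is determined only up to a global scalar. One fixes the scalar by demanding that $\psi_k$ act as the identity on the canonical trivialization $\KQ\rest_{e\times_B V}\isom\OO$, i.e.\ as $(c,x)\mapsto(c,t_k(x))$ on the total space $\C\times V$. This single normalization simultaneously (i) forces the cocycle condition $\psi_k\psi_{k'}=\psi_{k+k'}$ globally, because it visibly holds on $e\times_B V$ and the discrepancy is a global scalar, so the descent exists; and (ii) makes $\poincare\rest_{\sigma\times\dual{V}}$ trivial, because the descent data on $\sigma\times_B V$ is literally the product linearization. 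The statement about smooth fibers then follows from the computation $\poincare\rest_{A\times\dual{\alpha}}\isom t_x^\ast H\tensor H^{-1}\isom\OO_\alpha$ for $\alpha=\phi_{H_A}(x)$, which together with triviality on the zero sections characterizes the normalized Poincar\'e bundle. So the ``main obstacle'' you describe dissolves once you let the normalization on the zero section \emph{define} the descent rather than trying to verify it against the linearization inherited from the Heisenberg action on $\proj{7}$.
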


\begin{proof} 
To define $\poincare$ we need to check that $\KQ = q_\ast\res{m}^\ast H \tensor p_1^\ast H^{-1} \tensor p_2^\ast H^{-1}$ descends to the desired bundle on $V \times_B \dual{V}$.  We follow the steps in \cite{MumfordAbelian} pg 79 and check that the argument extends to our family.  First we check the action of individual elements $k \in K(H)$ lifts to an automorphism, $\psi_a$ of $H$, and then argue that there exists a choice of such lifts such that $\psi_k\psi_{k'} = \psi_{k + k'}$.  On the smooth fibers $K(H)$ acts sheaves via pullback by translation by $k$, $t^\ast_k$ and by \ref{multiplication} we can write the same on the singular fibers.
\begin{eqnarray*}
		t_{(0,k)}^\ast \KQ & = & t_{(0,k)}^\ast ((q_\ast\res{m}^\ast H) \tensor p_1^\ast H^{-1} \tensor p_2^\ast H^{-1}) \\
			& = & (q_\ast \res{t}_{(0,k)}^\ast \res{m}^\ast   H) \tensor t_{(0,k)}^\ast p_1^\ast  H^{-1} \tensor t_{(0,k)}^\ast p_2^\ast H^{-1} \\
			& = & (q_\ast  \res{m}^\ast  t_{k}^\ast H) \tensor p_1^\ast  H^{-1} \tensor p_2^\ast  t_{k}^\ast H^{-1} \\
			& = & (q_\ast\res{m}^\ast H) \tensor p_1^\ast H^{-1} \tensor p_2^\ast H^{-1}
\end{eqnarray*}
Where we used the previous lemma that the action of the kernel fixes H.  So for each element $e_i$ of the kernel there is an automorphism $\psi_{e_i}$ of the line bundle covering the action $V$.  We now need to argue that there is a choice of such automorphisms satisfying $\psi_k\psi_{k'} = \psi_{k + k'}$. Note that every $\psi_{e_i}$ can be changed by an element of $\C^\ast$ and still cover the action of $K(H)$.

There is an isomorphism 
\begin{eqnarray*}
	\KQ\rest_{e \times_B V} & = & (q_{\ast}\res{m}^\ast H)\rest_{e \times_B V} \tensor p_1^\ast  H^{-1}\rest_{e \times_B V}  \tensor p_2^\ast H^{-1}	\rest_{e \times_B V} \\
		& = & H \tensor \OO \tensor H^{-1} \\
		& = & \OO
\end{eqnarray*}
We write the total space of $\OO \rightarrow e \times V$ as  $\C \times V$.  Then we require the lifted automorphism of the action to restrict to the product isomorphism $(c,x) \longmapsto (c, t_k(x))$, where c is in the fiber and $x$ along the base.  Then on the restriction we have $\psi_k \psi_{k'} = \psi_{k + k'}$ and since the $\psi's$ are determined up to scalar multiplication, the $\psi$'s obey this rule on all of $V \times_B V$.

Checking the criteria for the smooth fibers, if $\alpha = \phi(x)$
\begin{eqnarray*}
\poincare\rest_{A \times \dual{\alpha}} & \isom & \phi_H^\ast \poincare \rest_{A \times x} \\
	& \isom &  ((q_\ast\res{m}^\ast H) \tensor p_1^\ast H^{-1} \tensor p_2^\ast H^{-1})\rest_{A \times x} \\
	& \isom &  ((t_{x}^\ast H) \tensor \OO_A \tensor H^{-1}) \\
	& \isom &  \OO_{\alpha}.
\end{eqnarray*}
\end{proof}

\section{The Fourier-Mukai Transform}
For any two varieties  $X\rightarrow B$ and $Y \rightarrow B$ the relative Fourier-Mukai transform is a functor from the derived category of X to the derived category of Y given as
\begin{eqnarray*}
	\FM_\KQ :D(X) \longrightarrow D(Y) \\
	\mathcal{F} \longmapsto Rp_{2\ast} ( p_1^{\ast} \mathcal{F} \tensor \mathcal Q)
\end{eqnarray*}
Where the kernel $\KQ$ is an object in the derived category of $X \times_B Y$, $p_1$ and $p_2$ are the first and second projections respectively.  In the following a sheaf always refers to an object in the derived category concentrated in a single degree.

We have the Poincare bundle $\poincare$ on $V \times_B \dual{V}$ constructed previously with the property that
\begin{eqnarray*}
(id_A \times \phi_H)^\ast\poincare = (q_\ast\res{m}^\ast H) \tensor p_1^\ast H^{-1} \tensor p_2^\ast H^{-1} = \KQ
\end{eqnarray*}

We wish to calculate the Fourier Mukai transform by finding a functor $F:D(V) \to D(\dual{V})$ such that the following diagram commutes
\begin{diagram}
	D(V)           &     \rTo^{\FM_\poincare}  	&        D(\dual{V})    \\
	               &   \rdTo_{\FM_\KQ} 			&        \dTo>F         \\
	               &                  			&          D(V)^{K(H)}
\end{diagram}
Where $D(V)^{K(H)}$ is the derived category with the $K(H)$ equivariant action which we get by tensoring with $\KQ$.

\begin{proposition}The above diagram commutes with $F=\phi_{H\ast}$, the equivariant push forward.  Further $F \circ S_\KQ = S_\poincare$ is an equivalence of categories \end{proposition}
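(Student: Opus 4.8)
The plan is to obtain the commutativity formally from flat base change and then to reduce the equivalence statement to a relative version of Mukai's theorem. First I would observe that the morphism of fibre products $\Phi = \id\times\phi_H : V\times_B V \to V\times_B\dual{V}$ sits in a Cartesian square whose vertical arrows are the second projections $p_2$ and $\bar p_2$ and whose lower horizontal arrow is $\phi_H : V\to\dual{V}$ (here $p_i$ denote the projections from $V\times_B V$ and $\bar p_i$ those from $V\times_B\dual{V}$): a point of $V\times_B\dual{V}$ equipped with a lift of its $\dual{V}$-coordinate along $\phi_H$ is precisely a point of $V\times_B V$. Since $\phi_H$ is a free finite quotient, hence finite flat, flat base change applies to this square. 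Feeding in the defining identity $\KQ=\Phi^\ast\poincare$ and the compatibility $p_1=\bar p_1\comp\Phi$ of the first projections gives $p_1^\ast\mathcal F\tensor\KQ=\Phi^\ast(\bar p_1^\ast\mathcal F\tensor\poincare)$, whence
\[
	\FM_\KQ(\mathcal F)=Rp_{2\ast}\Phi^\ast(\bar p_1^\ast\mathcal F\tensor\poincare)\isom\phi_H^\ast R\bar p_{2\ast}(\bar p_1^\ast\mathcal F\tensor\poincare)=\phi_H^\ast\FM_\poincare(\mathcal F).
\]

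I would then check that this isomorphism is $K(H)$-equivariant, which is exactly where the earlier lemma is used: the equivariant structure carried by $\KQ$ through its second factor is the pullback along $\Phi$ of the descent datum defining $\poincare$, so the base-change isomorphism intertwines the two $K(H)$-actions. For the free quotient $\phi_H$ the pullback $\phi_H^\ast : D(\dual{V})\to D(V)^{K(H)}$ is an equivalence, with quasi-inverse the equivariant push-forward $\phi_{H\ast}$; this is standard descent for a finite group acting freely. Thus the triangle commutes with the pullback functor $\phi_H^\ast$, and applying its quasi-inverse turns the displayed identity into $\phi_{H\ast}\comp\FM_\KQ\isom\FM_\poincare$, which is the asserted relation $F\comp\FM_\KQ=\FM_\poincare$ with $F=\phi_{H\ast}$. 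In particular the composite is an equivalence if and only if $\FM_\poincare$ is, so the remaining task is to prove that $\FM_\poincare:D(V)\to D(\dual{V})$ is an equivalence.

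For this I would invoke the relative criterion for Fourier--Mukai functors over a base: as $V$ and $\dual{V}$ are flat and proper over $B$ and the kernel $\poincare$ is flat over each factor, it suffices to check, after establishing the Tor-independence needed for the fibrewise restriction to be the expected kernel, that for every $b\in B$ the restricted kernel induces an equivalence $D(V_b)\to D(\dual{V}_b)$. Over the open locus $B^0$ of smooth fibres this is immediate: by the earlier proposition $\poincare$ restricts on $A\times\dA$ to the classical Poincaré bundle, and Mukai's theorem (\cite{HuybrechtsFM}) then supplies the fibrewise equivalence $D(A)\isom D(\dA)$.

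The hard part will be the eight singular fibres, the translation scrolls $T_i$. Here I would use the explicit construction of $\poincare$ via the small resolution $\res{V\times_B V}$ and the extended multiplication $\res{m}$, together with the identification of the rank-one torsion-free sheaves on $T_i$ with the dual scroll $\dual{T}_i$ set up earlier, to argue that $\poincare\rest_{T_i\times\dual{T}_i}$ is a relative universal sheaf. Concretely I would verify the Mukai/Bridgeland orthogonality for the images of the structure sheaves of closed points, namely that $\FM(\OO_x)$ is simple and that $\Hom(\FM(\OO_x),\FM(\OO_y)[i])$ vanishes unless $x=y$ and $0\le i\le 2$, which yields full faithfulness, and then deduce the equivalence from triviality of the relative canonical sheaf on the Calabi--Yau fibres. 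This is the two-dimensional analogue of the nodal-cubic computation reviewed above, and the genuine difficulty, the step I expect to cost the most, is that $T_i\times\dual{T}_i$ and the kernel are singular along surfaces, so one must control the derived restriction of $\poincare$ to the degenerate fibres and the vanishing of the relevant higher $\Hom$ groups there.
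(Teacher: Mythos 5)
Your base-change argument for the commutativity is correct and is, in effect, the content the paper compresses into ``Clear from the definitions'': the square with vertical arrows $p_2,\bar p_2$ and horizontal arrows $\id\times\phi_H$ and $\phi_H$ is Cartesian, $\phi_H$ is finite flat because $K(H)\isom\Z_8\times\Z_8$ acts freely, and since $\KQ=(\id\times\phi_H)^\ast\poincare$ while the first projections are compatible, flat base change gives $\FM_\KQ\isom\phi_H^\ast\comp\FM_\poincare$; applying the quasi-inverse $\phi_{H\ast}^{K(H)}$ of $\phi_H^\ast:D(\dual{V})\to D(V)^{K(H)}$ (descent for a free finite group quotient, the group order being invertible) then yields $\phi_{H\ast}\comp\FM_\KQ\isom\FM_\poincare$. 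You also implicitly repair the small inconsistency between the paper's diagram, whose vertical arrow points from $D(\dual{V})$ to $D(V)^{K(H)}$ and so should be labelled $\phi_H^\ast$ rather than $\phi_{H\ast}$, and the stated identity $F\comp S_\KQ=S_\poincare$.

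The second half of the statement, that $S_\poincare$ is an equivalence, is where both you and the paper stop short. Your plan --- reduce to a fibrewise criterion for relative kernels, invoke Mukai's theorem over the smooth locus $B^0$, and verify the Bondal--Orlov/Bridgeland orthogonality and simplicity conditions for the objects $\FM(\OO_x)$ over the eight translation scrolls --- has the right shape, but it is a plan rather than a proof: the statements it needs (that $\poincare$ restricted to $T_i\times_B\dual{T}_i$ is a universal sheaf for rank-one torsion-free sheaves, that the relevant $\Hom$ groups vanish, indeed that the derived restriction of $\poincare$ to the singular fibres is under control at all) are precisely the points the paper itself concedes it cannot establish, as in the remark preceding the cohomology tables where the transforms of $\OO_V$ and $\OO_V(H)$ are only ``expected.'' So you have not missed an argument that the paper supplies; the gap in the equivalence claim is genuine but shared, and your write-up at least localizes it to the singular fibres instead of leaving it behind ``clear from the definitions.''
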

                                                                                                                                                                                            
\begin{proof}
	Clear from the definitions. 
\end{proof}

We will use the above diagram to calculate $S_\poincare$ by calculating $S_\KQ$ and taking the equivariant portion of the bundle.

\section{The Action of the Fourier-Mukai Transform on Cohomology}

We use the fact that the Fourier-Mukai transform descends to a linear map on the chern characters of sheaves in the cohomology rings of $X$ and $Y$, $\fm_\poincare:H^\ast (V,\Q) \longrightarrow H^\ast(\dual{V},\Q)$ .  Since the cohomology of $V$ and $\dual{V}$ have rank 6 (over $\Q$) if we calculate the cohomology of 6 independent sheaves we'll be able to reconstruct $\fm_\poincare$ as a $6 \times 6$ matrix.

\begin{enumerate}
	\item $\OO_A$:\\
		Writing $i:A \hookrightarrow V$ we have that $S_\KQ (i_\ast \OO_A) = \OO_{pt}[-2]$ by standard results for the Fourier-Mukai transform of the Poincar\'{e} sheaf on an abelian surface.  We use Grothendieck-Riemann-Roch to calculate the chern characters
		\begin{eqnarray*}
			ch(i_\ast\OO_A) & = & i_\ast (ch(\OO_A).td(N_{A/V})^{-1}) \\
				& = & i_\ast ( (1).(1) ) \\
				& = & [A]
		\end{eqnarray*}
 		Where we used that $N_{A/V}$ is trivial.

	\item $\OO_e$:\\
		Write $\sigma :B \hookrightarrow V$ where $\sigma(B)=e$, and consider any sheaf $\mathcal{R}$ supported on $B \isom \proj{1}$.  Then $\FM_{\KQ}^i(\sigma_\ast\mathcal{R}) = R^ip_{2\ast}p_1^\ast \sigma_\ast \mathcal{R} \tensor \KQ $.  But $p_1^\ast \sigma_{i\ast}\mathcal{R}$ has support along $\sigma \times_B V_2$ in $V_1 \times_B V_2$, over this locus, $\KQ \isom \OO_{\sigma \times_B V}$, so we have $\FM_\KQ (\sigma_{i\ast} \mathcal{R}) \isom p_{2\ast}p_1^\ast (\sigma_{i\ast} \mathcal{R})$.  By the commutative diagram
		\begin{diagram}
			\sigma_i \times_B V_2   &         \rTo^{p_2}_\isom         &        V_2             \\
	         \dTo<{p_1}             &                                  &        \dTo>{\pi}      \\
	          \sigma_i (B)          &       \lTo^{\isom}_{\sigma_i}    &          B             \\
		\end{diagram}
		We get $ \FM_{\KQ}(\sigma_\ast\mathcal{R}) \isom \pi^\ast \mathcal{R}$.  Take $R=\OO_e$ to get the result.
		
		% 		Or (alternate ending) by \cite{HartshorneAG} Proposition III 9.3 we have that $R^i p_{2\ast} (p_1^\ast \sigma_\ast \mathcal{R} = \pi_2^\ast R^i \pi_{1\ast} \sigma_\ast \mathcal{R}$, and in this case $\pi_{1\ast}$ is an isomorphism and so has no higher right derived functors. Since $\mathcal{R} = \OO_B$ we have $S_\KQ (\sigma_\ast \OO_B) = \pi^\ast \OO_B = \OO_V$
		
		To calculate the map on chern characters again we us Grothendieck-Riemann-Roch
		\begin{eqnarray*}
			ch(\sigma_\ast \OO_e) & = & \sigma_\ast (ch(\OO_e).td(N_{e/V})^{-1}) \\
				& = & \sigma_\ast (1.(1+[pt])) \\
				& = & = [e] + [pt]
		\end{eqnarray*}
		Where we again used that $N_{e/V} = \OO_e(-1) \dsum \OO_e(-1)$		

	\item $\OO_{pt}$:\\
		$S_\KQ ( i_\ast \OO_{pt}) = \OO_A$

	\item $\OO_A(H)$: \\ 
		We have		
		\begin{eqnarray*}
			\FM_\KQ (H)^i & = & R^ip_{2\ast}(p_1^\ast H \tensor \KQ)  \\
				& = & R^ip_{2\ast} ( p_{1\ast} H \tensor m^\ast H \tensor p_1^\ast H^{-1} \tensor p_2^\ast H^{-1} ) \\
				& = & R^ip_{2\ast}(m^\ast H \tensor p_2^\ast H^{-1})  \\
				& = & R^ip_{2\ast}m^\ast H \tensor H^{-1} \\
		\end{eqnarray*}
		So we need to look at $R^ip_{2\ast}m^\ast H.$    To distinguish the first and second factors we write
		\begin{diagram}
			A_1 \times A_2   &   \rTo^m     &    A        \\
			\dTo<{p_2}       &              &             \\
        	  A_2            &              &             \\
		\end{diagram}
		Fix a point $y\in A_2$ and look at the fiber over $y$.  As the fibers of $p_2$ are two dimensional we have that $R^ip_2 m^\ast H = 0$ for $i \geq 3$.  For $i=2$ by the  Base Change and Cohomology theorem we have that the fibers of $R^2p_{2\ast} m^\ast H$ are isomorphic to $H^2(A_{1,y}, m^\ast H \rest_y) \isom H^2(A_{1,y},t^\ast_y H)$.  By Kodaira vanishing we have $H^2(A,t^\ast_y H) = 0$ for all y so $R^2p_{2\ast} m^\ast H=0$ .  In a similar fashion we get $R^1 p_{2\ast} m^\ast H = 0$.  We are left with $R^0 p_{2\ast} m^\ast H$, which we will denote by $p_{2\ast} m^\ast H$.  By the Cohomology and Base Change theorem again we get that that $p_{2\ast} m^\ast H$ is a rank 8 vector bundle on $A$ as $dim(H^0(A_{1,y},t^\ast H))=8$

		For each fiber of this bundle, $H^0(A,t^\ast_y H)$ we have an isomorphism with a chosen fiber $H^0(A_{y,0},H)$ given by translation $t_{y\ast} H^0(A,H) \isom H^0(A_{1,y},t^\ast_y H)$ which is just translation of sections.  Hence $p_{2\ast}m^\ast H$ is trivial and canonically isomorphic to $H^0(A,H)$ and we get a global trivialization.  We conclude $\FM_\KQ ( \OO_A(H) ) = H^0(A,H) \tensor H^{-1}$ 

		We calculate the chern characters of $\OO_A(H)$ and $H^0(A,H) \tensor H^{-1}$.  The chern polynomial of $\OO_A(H)$ is $c_t(\OO_A(H)) = 1 + i^\ast Ht$ from which we calculate the chern character to be $ch(\OO_H(A)) = 1 + i^\ast H + \frac{1}{2} i^\ast H. i^\ast H$.  As $N_{A/V}$ is trivial $td(N_{A/V} = td(N_{A/V})^{-1} = 1$.  So by Grothendieck-Riemann-Roch
		\begin{eqnarray*}
			ch(i_\ast \OO_A(H)) & = & i_\ast (ch(O_A(H).td(N_{A/V})^{-1}) \\
				& = & i_\ast (1 + i^\ast H + \frac{1}{2} i^\ast H. i^\ast H) \\
				& = & [A] + [H.A] + \frac{1}{2} [H.H.A]    \\
  				& = & [A]  + 16[l] + 8[pt]
		\end{eqnarray*}
		The chern polynomial of $H^0(A,H) \tensor H^{-1} \isom \dsum_{j=1}^8 O_A(-H)$ is 
		\begin{eqnarray*}
			c_t & =  & (1-i^\ast H)^8	\\
				& = & 1 - 8[i^\ast H] + 28 [i^\ast H] . [i^\ast H]		
		\end{eqnarray*}
   		Which gives chern character
		\begin{eqnarray*}
			ch(H^0(A,H) \tensor H^{-1}) & = & rk + c_1 + \frac{1}{2}(c_1^2 - 2 c_2) + \dots \\
				& = & 8 - 8 [i^\ast H] + \frac{1}{2}(64 [i^\ast H]^2 - 56 [i^\ast H]^2) \\
				& = & 8 - 8[i^\ast H] + 4 [i^\ast H]^2
		\end{eqnarray*}
		Using Grothendieck-Riemann-Roch, the fact that $N_{A/V}$ is trivial, and the projection formula we have
		\begin{eqnarray*}
			ch(i_\ast H^0(A,H) \tensor H^{-1} ) & = & i_\ast (8 - 8[i^\ast H] + 4 [i^\ast H]^2) \\
				& = & 8[A] - 8 [H.A] + 4 [H.H.A] \\
				& = & 8[A] - 128[l] + 64[pt]
		\end{eqnarray*}
\end{enumerate}
\begin{remark}\label{FMError}
For the last two sheaves we unfortunately have only partial results.  The main difficulty lies in figuring out exactly what $m^*H$ looks like on the singular fibers. For the methods used in this paper these results will not actually matter.  When we construct the matrix, $s_\poincare$, representing the Fourier-Mukai transform will will only need the results from the last three columns representing those classes which have support codimension 2 or greater in V and we've already determined these results in the calculations above.  As we will see, this simplification results from our using a spectral curve, i.e. a line bundle supported on curve in V rather than a more general spectral sheaf with support on surfaces (or even the entire threefold).	
\end{remark}
\begin{enumerate}
	\item[5.] $\OO_V$:\\ 
		We need to calculate $R^ip_{2\ast}\mathcal{F}=R^ip_{2\ast}\KQ$.  First note that $R^ip_{2\ast} \KQ = 0$ for $i \geq 3$ as the fiber dimension is 2.  By \cite{HartshorneAG} Theorem 12.11 $R^2p_{2\ast} \KQ$ has fibers isomorphic to $H^2(V_a,\KQ_a)$.  Choose a point $b \in B$ and look at the fibers over B.  For the case of the smooth fibers we have the fiber of $\KQ$ over $a \in A_1$ is 
		\begin{eqnarray*}
			\KQ \rest_{a \times A_2} & = & (m^\ast H \tensor p_1^\ast H^{-1} p_2^\ast H^{-1})\rest_{a \times A} \\
				& = & (m^\ast H \tensor p_1^\ast H^{-1})\rest_{a \times A} \\
				& \isom & t^\ast_a H \tensor H^{-1} \\
		\end{eqnarray*}
		By a standard result for abelian varieties $H^i(A,\mathcal{F}_a) = 0$ for any non trivial line bundle in $Pic^0(A)$.  Observe that $t^\ast_a H \tensor H^{-1}$ is precisely the map $\phi_H:A \longrightarrow \dA$ and so is trivial for $a \in K(H) \isom \Z_8 \times \Z_8$.  So we see that on the smooth locus of $\pi$, $R^2p_{2\ast} \KQ$ has support on the 64 sections of $\pi$, $e_i$.  By upper-semicontinuity the support is the entire $e_i$, but may contain more components in the singular fibers.

		We consider nonsingular and singular points in $T$ taking first $y \in T$ a nonsingular point.  The the fiber over $y$ is just a copy of $T$ labeled by $y$, $T_y$.  Recall that from the definition of the multiplication map it's defined for all pair $(x,y)$ where $x$ is any point of $T$ and $y$ is a smooth point. Hence we can write by Proposition \ref{multiplication}
		\begin{eqnarray*}
			\KQ\rest_{T_y} & = & t^\ast_y H \tensor \OO \tensor H^{-1}  \\
				& = & t^\ast_y H \tensor H^{-1} 
		\end{eqnarray*}
		By the base change and cohomology theory we have fiber wise isomorphisms $(R^2p_{2\ast} \KQ)\rest_{T_y} \isom H^2(T_y,t^\ast_y H \tensor H^{-1})$. We expect that since $t^\ast_y H \tensor H^{-1}$ is a rank 1 torsion free sheaf on T we have that $H^i(T,t^\ast_y H \tensor H^{-1})=0$ unless $t^\ast_y H \tensor H^{-1}$ is trivial.

		If what we expect is true then on $e_i$ since the fiber dimension is constant we have a line bundle on each $e_i$.  Observe that $\KQ$ restricted to any section is trivial since $(m^\ast H \tensor p^\ast_1 H^{-1} \tensor p^\ast_2 H^{-1})\rest_{V \times e_i} = H \tensor H \tensor \OO = \OO_e$.  So we have over the $e_i$ 
		\begin{eqnarray*}
			\FM_\KQ (\OO_V)\rest_{e_i} & = & R^2 p_{2\ast} \KQ 	\\
				& = & R^2 \pi_\ast \OO_{e_i} 			\\
				& = & (\pi_\ast \omega_\pi)^D 			\\
				& = & \OO_e(-2) 						
		\end{eqnarray*}
		Where in the third step we used relative duality (see \cite{HartshorneRD}).

		We would like to claim that $R^1p_{2\ast}\KQ = 0$ by regularity for the Poincar\'{e} sheaf.  This holds for the smooth fibers but we were  unable to show it for the singular fibers. $R^0p_{2\ast}\KQ = 0$ since otherwise we would have a torsion section of $\KQ$ and there aren't any.
		
		Calculating the chern classes for the our expected Fourier-Mukai we get.
		\begin{eqnarray*}
			ch(\sigma_{i\ast}\OO_B(-2)) & = & \sigma_{i\ast} (ch(\OO_B(-2).td(N_{B/V})^{-1}) \\
				& = & \sigma_{i\ast} ((1-2[pt]).(1+[pt])) \\
				& = & \sigma_{i\ast} (1-[pt])   \\
				& = & [e]-[pt]
		\end{eqnarray*}
		Where we used that since $e$ is the class of an exceptional curve that is the blowup of the cone in $\C^4$ it has normal bundle $N_{e/V} = \OO_e(-1) \dsum \OO_e(-1)$

	\item[6.] $\OO_V(H)$:\\
		Calculating
		\begin{eqnarray*}
			\FM_\KQ^i ( \OO_V(H)) & = & R^ip_{2\ast} (p_1^\ast H \tensor q_\ast m^\ast H \tensor p_1^\ast H^{-1} \tensor p_2^\ast H^{-1} \\
			& = & R^ip_{2\ast} ( q_\ast m^\ast H \tensor p_{2\ast} H^{-1})             \\
			& = & R^i(p_{2\ast} q_{\ast} \res{m}^\ast H) \tensor H^{-1}
		\end{eqnarray*}
		So we need to calculate $(p_{2\ast} q_\ast \res{m}^\ast H)$.  From the calculation of $\OO_A(H)$ we know that restricted to the smooth fibers we have this $(p_{2\ast} q_\ast \res{m}^\ast H)=H^0(A,H)$ which we can think of as the sheaf $\pi^\ast \pi_{\ast} H$ restricted to $A$.  We expect therefore that $(p_{2\ast} q_\ast \res{m}^\ast H) \isom \pi^\ast \pi_{\ast} H$

 		%Look in more detail at the map $\res{p}_2 = p_2 \comp q$ which away from the exceptional locus agrees with $p_2$. Since we already know what happens on the smooth locus we need to consider what happens at the singular fibers T.   Look at a smooth point $y\in T$. Then the inverse image of y is just

		The total chern class for $\OO_V(H)$ is $c_t(\OO_V(H)) = 1 + [H]$ so the chern characters are given by 
		\begin{eqnarray*}
			ch(\OO_V(H)) & = & 1 + [H] + \frac{1}{2}[H]^2 + \frac{1}{6} [H]^3 \\
				& = & 1 + [H] + 8[e] + 8[l] + \frac{8}{3}[pt]
		\end{eqnarray*}
		We calculate the chern character.
		\begin{eqnarray*}
			ch(\pi^\ast \pi_\ast \OO_A(H)\tensor O_V(-H)) & = & rk + c_1 + \frac{1}{2}(c_1^2 - 2c_2) + \frac{1}{6}(c_1^3 - 3c_1c_2 + 3c_3) \\
				& = & 8 - 8[H] + 4[H]^2 - \frac{4}{3}[pt] \\
				& = & 8 - 8[H] + 64[e] + 64[l] - \frac{64}{3}[pt].
		\end{eqnarray*}

\end{enumerate}

We summarize the results in the following tables\footnote{The last two rows of both tables are our expected results.  See Remark \ref{FMError}}\newpage
\begin{table}[ht]\caption{Summary Of Fourier-Mukai On A Basis Of Sheaves}
\begin{center}
\resizebox{\linewidth}{!}{%
\begin{tabular}{|c|c|c|c|} \hline
	$\displaystyle\mathcal{F}$ 	& $ch(\mathcal{F})$ 	&$\FM_\KQ(\mathcal{F})$	&	$ch(\FM_\KQ(\mathcal{F}))$		\\ \hline \hline
	 $\OO_A$    				& $[A]$ 				& $\dsum_{i=1}^{64}\OO_{pt}$  &$64\OO_{pt}$     			\\ \hline
	 $\OO_e$    				& $[e] + [pt]$ 			& $\OO_V$     &$[V]$                   			\\ \hline
	 $\OO_{pt}$ 				& $[pt]$ 				& $\OO_A$     &$[A]$              					\\ \hline
	 $\OO_A(H)$ 				& $[A] + 16[l] + 8[pt]$ & $H^0(A,H)\tensor H^{-1}$ 		& $8[A]-128[l]+64[pt]$ 		    \\ \hline
	 $\OO_V$    				& $[V]$ 				& $\dsum_{i=1}^{64} \sigma_{i\ast}\OO_{B}$ & $64[e]-64[pt]$	\\ \hline
	 $\OO_V(H)$ 				& $[V]+[H]+8[e]+8[l]+\frac{8}{3}[pt]$ & $(\pi^\ast\pi_\ast\OO_V(H))\tensor H^{-1}$ & $8[V]-8[H]+64[e]+64[l]-\frac{64}{3}[pt]$ \\ \hline
\end{tabular}
}
\vspace{10mm}

\begin{tabular}{|c|c|c|} \hline
	 $\displaystyle\mathcal{F}$ 	& $\FM_\poincare (\mathcal{F})$ & $ch(\FM_\poincare (\mathcal{F}))$ \\ \hline \hline
	 $\OO_A$    				& $\OO_{pt}$    					& $[pt]$       						\\ \hline
	 $\OO_e$    				& $\OO_{\dual{V}}$     			& $[\dual{V}]$       				\\ \hline
	 $\OO_{pt}$ 				&$\OO_{\dA}$      				& $[\dA]$    						\\ \hline
	 $\OO_A(H)$ 				&							& $8[\dA]-16[\dE]+[pt]$ 			\\ \hline
	 $\OO_V$    				& $\dual{\sigma}_\ast \OO_{B}$	&  $[\de]-[pt]$ 					\\ \hline
	 $\OO_V(H)$ 				&    			& $8[\dual{V}] -[\dH] +[\de] + 8[\dE] -\frac{1}{3}[pt]$ \\ \hline
\end{tabular}

\end{center}
\end{table}
\vspace{10mm}

From which we can calculate the map $s:H^\ast(V,\Q) \longrightarrow H^\ast(\dual{V},\Q)$ on the chern characters.  In the basis of $V$ and $\dual{V}$ given by $\{[V],[H],[A],[e],[l],[pt] \}$ and \newline $\{[\dual{V}],[\dH],[\dA],[\de],[\dE],[pt]\}$ respectively we have\footnote{The first two columns of $\fm_\poincare$ are the expected result. In what follows only the last four columns are relevant to our calculation. See Remark \ref{FMError}} 

\begin{proposition} The map induced on cohomology $s_\poincare :H^\ast(V,\Q) \longrightarrow H^\ast(\dual{V},\Q)$ and $s^{-1}_\poincare  :H^\ast(\dual{V},\Q) \longrightarrow H^\ast(V,\Q)$ are given by the matrices
\begin{displaymath}
	s_\poincare = \left( \begin{array}{cccccc}
			0	&	0	&	0	&	1	&	0	&	0 	\\
			0	&	-1	&	0	&	0	&	0	&	0	\\
			0	&	16/3	&	0	&-1	&	0	&	1	\\
			1	&	0	&	0	&	0	&	0	&	0	\\
			0	&	16	&	0	&	0	&	-1	&	0	\\
			-1	&	2/3	&	1	&	0	&	0	&	0	
		\end {array} \right)
\;\;\;\;
	s_\poincare^{-1}= \left( \begin {array}{cccccc}
			0	&	0	&	0	&	1	&	0	&	0	\\
			0	&	-1	&	0	&	0	&	0	&	0	\\
			0	&	2/3	&	0	&	1	&	0	&	1	\\
			1	&	0	&	0	&	0	&	0	&	0	\\
			0	&	-16	&	0	&	0	&  -1	&	0	\\
			1	&  16/3	&	1	&	0	&	0	&	0	
		\end {array} \right)
 \end{displaymath}
\end{proposition}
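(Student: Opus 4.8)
The plan is to treat this proposition purely as a problem in linear algebra, exploiting the fact recorded at the start of this section that the Fourier-Mukai transform descends (via Grothendieck-Riemann-Roch) to a $\Q$-linear map $\fm_\poincare : H^\ast(V,\Q)\to H^\ast(\dual{V},\Q)$ satisfying $ch(\FM_\poincare(\mathcal{F})) = \fm_\poincare(ch(\mathcal{F}))$, whose matrix in the two chosen bases is $s_\poincare$. Since both cohomology rings are $6$-dimensional over $\Q$, such a linear map is completely determined by its values on any basis, so it suffices to know its effect on six sheaves whose Chern characters span $H^\ast(V,\Q)$. The two summary tables supply exactly this: for each test sheaf $\mathcal{F}\in\{\OO_A,\OO_e,\OO_{pt},\OO_A(H),\OO_V,\OO_V(H)\}$ we read off both $ch(\mathcal{F})$ in the basis $\{[V],[H],[A],[e],[l],[pt]\}$ and $ch(\FM_\poincare(\mathcal{F}))$ in the basis $\{[\dual{V}],[\dH],[\dA],[\de],[\dE],[pt]\}$.

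First I would assemble the six input Chern characters as the columns of a matrix $M$ and the six output Chern characters as the columns of a matrix $N$, so that $s_\poincare M = N$ and hence $s_\poincare = N M^{-1}$. The key preliminary check is that $M$ is invertible, i.e. that the six input classes are linearly independent; this is immediate by inspection, since $\OO_V$ isolates $[V]$, the difference $ch(\OO_V(H))-ch(\OO_V)$ isolates $[H]$, $\OO_A$ gives $[A]$, $\OO_e$ gives $[e]$, the $16[l]$ term in $ch(\OO_A(H))$ gives $[l]$, and $\OO_{pt}$ gives $[pt]$.

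Rather than invert $M$ by brute force, I would solve for the columns of $s_\poincare$ (the images of $[V],[H],[A],[e],[l],[pt]$) in a triangular cascade. Three columns are read off directly: $ch(\OO_A)=[A]$, $ch(\OO_{pt})=[pt]$ and $ch(\OO_V)=[V]$ give $s_\poincare([A])=[pt]$, $s_\poincare([pt])=[\dA]$ and $s_\poincare([V])=[\de]-[pt]$. The remaining three then follow by subtraction: from $ch(\OO_e)=[e]+[pt]$ one gets $s_\poincare([e])=[\dual{V}]-s_\poincare([pt])$; from $ch(\OO_A(H))=[A]+16[l]+8[pt]$ one solves for $s_\poincare([l])$ after removing the now-known contributions of $[A]$ and $[pt]$ and dividing by $16$; and finally $ch(\OO_V(H))$ yields $s_\poincare([H])$ once all the other images are substituted. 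Collecting the six columns produces the displayed matrix, and a direct $6\times 6$ inversion (the transform is an equivalence, so the inverse exists) gives $s_\poincare^{-1}$.

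The computation is essentially mechanical, so I do not expect a genuine obstacle in this proposition itself; the real content lies in the Fourier-Mukai calculations of the preceding section. The one point demanding honesty is that two of the test sheaves, $\OO_V$ and $\OO_V(H)$, were computed only up to the \emph{expected} behavior on the singular fibers (Remark \ref{FMError}), so the columns $s_\poincare([V])$ and $s_\poincare([H])$ inherit that conditional status. As a final consistency check I would verify directly that $s_\poincare^{-1}s_\poincare=\mathrm{id}$, which simultaneously confirms the arithmetic of all six columns and is consistent with $\FM_\poincare$ being an equivalence of categories.
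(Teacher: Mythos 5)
Your proposal is correct and is essentially the paper's own argument: the paper likewise obtains $s_\poincare$ by reading off the six input Chern characters and the six output Chern characters from the summary tables and solving the resulting linear system (the same triangular cascade you describe reproduces every column, e.g.\ $s_\poincare([l])=-[\dE]$ from $ch(\OO_A(H))$ and $s_\poincare([H])$ last from $ch(\OO_V(H))$), and it flags the same caveat about the $[V]$ and $[H]$ columns via Remark \ref{FMError} and the footnotes. No discrepancy to report.
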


\section{Stability of Bundles and Spectral Curves}

Although we will not be able to completely classify which bundles are stable we can offer some numerical conditions on when stability is guaranteed.  The proof is essentially the same as for elliptic surfaces and we follow \cite{FriedmanMorganWitten}.  First we establish some definitions and basic correspondences between certain sub-bundles of $\mc{E}$ and their spectral covers.

Consider a spectral curve $C \hookrightarrow V$ with a line bundle $\mc{L}$ supported on $C$.  Associated to this data via the Fourier-Mukai transform is a vector bundle $\mc{E}$ or rank $r$ on the dual fibration.  Write $C_{r'}$ for the spectral curve associate with $\wedge^{r'} \mc{E}$.

\begin{definition}\label{AbsIrr}We will say that the spectral data {\it $r'-$irreducible (resp. $r'-$reducible)} if $C_{r'}$ is irreducible (resp. reducible) and is {\it absolutely irreducible (resp. reducible)} if $C_{r'}$ is irreducible (reducible) for all $0 < r' <r$.
\end{definition}

\begin{proposition}\label{Rank1Reducible} Let $\mathcal{F}$ be a rank 1 torsion free sheaf on $\dV$ with $c_1(\mc{F}).A = 0$ and $\mathcal{E}$ a rank $r > 1$ bundle whose restriction to a generic fiber is flat semi-stable.  Then if there exists a map $0 \rightarrow \mc{F} \rightarrow \mc{E}$ the spectral cover associated to $E$ is reducible.
\end{proposition}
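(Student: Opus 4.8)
The plan is to translate the algebraic inclusion $\mc{F}\hookrightarrow\mc{E}$ on $\dV$ into a containment of spectral data on $V$ under the Fourier--Mukai transform, and then to finish with an elementary count of degrees over the base. First I would unwind the numerical hypothesis. Writing the fiber class on $\dV$ as $A=\dA$ and $c_1(\mc{F})=\alpha\dH+\beta\dA$, the intersection table gives $c_1(\mc{F}).\dA=16\alpha\dE$, so the condition $c_1(\mc{F}).\dA=0$ forces $\alpha=0$. Then $c_1(\mc{F})$ is vertical and $c_1(\mc{F}|_{\dA})=\beta\,\dA|_{\dA}=0$ on a generic smooth fiber, since the normal bundle of a fiber is trivial. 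Hence $\mc{F}|_{\dA}$ is a rank one torsion free sheaf of degree $0$, i.e. generically a point of $\Pic^0(\dA)$, which under the fiberwise transform corresponds to a single point of the dual fiber $A\subset V$. As $b$ ranges over $B$ these points sweep out a section $\sigma\hookrightarrow V$, the spectral datum of $\mc{F}$; by construction the spectral curve $C$ of $\mc{E}$ is the $r$-section swept out by the $r$ spectral points of $\mc{E}|_{\dA}$.

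The heart of the argument is the fiberwise statement that $\sigma\subseteq C$. Fix a generic $b$ so that $\mc{F}|_{\dA}$ and $\mc{E}|_{\dA}$ are both flat semistable of degree $0$ and so that the restricted map $\mc{F}|_{\dA}\to\mc{E}|_{\dA}$ is still injective (this holds on a dense open subset of $B$). Since $\mc{F}|_{\dA}$ has slope $0$, it is a subobject of $\mc{E}|_{\dA}$ in the abelian category of slope $0$ semistable sheaves, and being a line bundle in $\Pic^0(\dA)$ it is simple, so its class occurs among the Jordan--H\"older factors of $\mc{E}|_{\dA}$; equivalently, a nonzero map between degree $0$ line bundles on an abelian surface is an isomorphism, so the point $p$ determined by $\mc{F}|_{\dA}$ coincides with one of the $r$ spectral points $q_1,\dots,q_r$ of $\mc{E}|_{\dA}$. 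Thus the point of $\sigma$ over $b$ is one of the points of $C$ over $b$, and passing to closures gives $\sigma\subseteq C$. The same conclusion can be phrased transform-theoretically: both $\mc{F}$ and $\mc{E}$ are fiberwise WIT of the same index, as is the cokernel $\mc{G}$ (again semistable of degree $0$ because $\mc{F}|_{\dA}$ achieves the maximal slope), so applying the inverse transform to $\ses{\mc{F}}{\mc{E}}{\mc{G}}$ yields an injection of transforms and hence $\mathrm{Supp}(\FM_\poincare^{-1}\mc{F})=\sigma\subseteq C=\mathrm{Supp}(\FM_\poincare^{-1}\mc{E})$.

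To conclude I would run a degree count over the base. The curve $\sigma$ is one dimensional and maps to $B\isom\proj{1}$ with degree $1$, while $C\to B$ has degree $r>1$. A one dimensional subvariety of an irreducible curve is either the whole curve or empty, so $\sigma\subseteq C$ with $\deg(\sigma/B)=1\neq r=\deg(C/B)$ is impossible unless $C$ is reducible; thus $\sigma$ is a proper component of $C$ and the spectral cover of $\mc{E}$ is reducible, which is the claim.

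The step I expect to be the main obstacle is making $\sigma\subseteq C$ rigorous across the singular fibers. Over the eight translation scrolls the transform, the very notion of a degree $0$ line bundle, and the WIT property all degenerate, and $\mc{F}$ is only torsion free rather than locally free, so the clean fiberwise splitting of $\mc{E}|_{\dA}$ into line bundles can fail. My remedy is to carry out the comparison only over the dense open locus of smooth fibers where the restricted inclusion stays injective, establish $\sigma\subseteq C$ there, and then take scheme-theoretic closures; since reducibility of $C$ is detected generically this suffices, exactly as in the elliptic surface case of \cite{FriedmanMorganWitten}.
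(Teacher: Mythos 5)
Your proposal is correct and follows essentially the same route as the paper: restrict to a generic smooth fiber, write $\mc{F}\rest_{\dA}$ as $I_Z\tensor\OO_\gamma$ with $\OO_\gamma\in\Pic^0(\dA)$, observe that a nonzero map into $\mc{E}\rest_{\dA}\isom\OO_{\alpha_1}\dsum\cdots\dsum\OO_{\alpha_r}$ forces $\gamma=\alpha_i$ for some $i$, and conclude over a Zariski open subset of the base that the section determined by $\mc{F}$ is a component of the spectral curve of $\mc{E}$, which therefore must be reducible since $r>1$. The only cosmetic difference is that the paper makes the middle step precise by computing the Fourier--Mukai transforms $S^i(\OO_\gamma\tensor I_Z)$ from a short exact sequence and reading off the component from the tail of the resulting long exact sequence, whereas you argue directly via semistability and the degree count of $\sigma$ and $C$ over $B$; your explicit handling of the closure over the singular fibers is a point the paper leaves implicit.
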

\begin{proof}
We can write $\mc{F} = I_Z \tensor O_\alpha$ where $Z \subset \dV$ is a sub-scheme of codimension $\geq 2$ and $\OO_\alpha$ is in $\Pic^0(\dV)$.

Restricted to a generic fiber A the ideal sheave $I_Z$ has codimension two, since if were a curve generically then Z would be a surface.  Hence we can take $I_Z\rest_A$ to be a collection of points 0-dimensional sub-schemes $\{z_1,...z_k\}$.  On A consider the short exact sequence
\begin{equation*}
	\ses{\OO_\gamma \tensor I_Z}{\OO_\gamma}{\dsum \C_{z_i}}
\end{equation*}
Taking the Fourier-Mukai transform we get the long exact sequence
\begin{diagram}
	0 & \rTo & S^0(\OO_\gamma \tensor I_Z) & \rTo & S^0(\OO_\gamma) & \rTo & S^0(\dsum \C_{z_i}) & \rTo &	\\
	  & \rTo & S^1(\OO_\gamma \tensor I_Z) & \rTo & S^1(\OO_\gamma) & \rTo & S^1(\dsum \C_{z_i}) & \rTo &	\\
	  & \rTo & S^2(\OO_\gamma \tensor I_Z) & \rTo & S^2(\OO_\gamma) & \rTo & S^2(\dsum \C_{z_i}) & \rTo & 0	\\	
\end{diagram}
But knowing the Fourier-Mukai transform of the first two terms we can write
\begin{diagram}
	0 & \rTo & S^0(\OO_\gamma \tensor I_Z) & \rTo & 0    	  & \rTo & \dsum \OO_{\alpha_i} & \rTo &	\\
	  & \rTo & S^1(\OO_\gamma \tensor I_Z) & \rTo & 0    	  & \rTo & 0			& \rTo &	\\
	  & \rTo & S^2(\OO_\gamma \tensor I_Z) & \rTo & \C_\gamma & \rTo & 0			& \rTo & 0	\\	
\end{diagram}
From which we conclude that $S^0(\OO_\gamma \tensor I_Z)=0$, $S^1(\OO_\gamma \tensor I_Z) \isom \dsum \OO_{\alpha_i}$, and $S^2(\OO_\gamma \tensor I_Z) \isom \C_\gamma$.

Now consider the case where we have the short exact sequence
\begin{equation*}
	\ses{\OO_\gamma \tensor I_Z}{\mc{E}}{\mc{Q}}
\end{equation*}
On a generic fiber we have $\mc{E}\rest_A \isom \OO_{\alpha_1}\dsum ... \dsum \OO_{\alpha_r}$.  Since we have a nonzero map $\OO_\gamma \tensor I_Z \rightarrow \OO_{\alpha_1}\dsum ... \dsum \OO_{\alpha_r}$ we know that $\gamma = \alpha_i$ for at least some i.  Moreover by suitable generic choices we can assume that $\OO_{\alpha_i} \neq \OO_{\alpha_j}$ for $i \neq j$.

From the above short exact sequence we get the long exact sequence whose last terms are
\begin{diagram}
	0  & \rTo & \C_{\alpha_i} & \rTo & \C_{\alpha_1} \dsum ... \dsum \C_{\alpha_r} & \rTo & S^2(Q) & \rTo & 0 
\end{diagram}
But this is happens over a Zariski open set of the base and hence the spectral curve associate to $\mc{E}$ has a component associated to $\OO_\gamma \tensor I_Z$.
\end{proof}

\begin{proposition} If there is a sheaf $\mc{F}$ such that $\mc{F} \rightarrow \mc{E}$ injective and on a generic fiber $A$, $\slope_D(\mc{F}) = 0$ (i.e. $c_1(\mc{F}).A.D=0$), then $\mc{E}$ is $r'$-reducible, where $r'=\rk{\mc{F}}$.
\end{proposition}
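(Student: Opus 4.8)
The plan is to reduce to the rank-one statement already proved in Proposition \ref{Rank1Reducible} by passing to top exterior powers. The inclusion $\mc{F} \hookrightarrow \mc{E}$ of a rank-$r'$ subsheaf induces a map $\wedge^{r'}\mc{F} \to \wedge^{r'}\mc{E}$; let $\mc{F}'$ denote its image, a rank-one subsheaf of the bundle $\wedge^{r'}\mc{E}$, hence torsion free, with $c_1(\mc{F}') = c_1(\mc{F})$. This map is nonzero: on a generic fiber $A$ the restriction $\mc{F}\rest_A \hookrightarrow \mc{E}\rest_A$ is an injection of bundles, so its top exterior power is again injective and in particular nonzero. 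A nonzero map out of a rank-one torsion-free sheaf is injective, so $\mc{F}' \hookrightarrow \wedge^{r'}\mc{E}$ is an injection of a rank-one torsion-free sheaf, which is the input required by Proposition \ref{Rank1Reducible}.

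To invoke that proposition for the pair $(\mc{F}',\wedge^{r'}\mc{E})$ I must verify its hypotheses. Since $0 < r' < r$, the bundle $\wedge^{r'}\mc{E}$ has rank $\binom{r}{r'} > 1$. Because $\mc{E}\rest_A \isom \Dsum_i \OO_{\alpha_i}$ is a direct sum of degree-zero line bundles (flat semistability of slope zero, as produced by the spectral construction), its exterior power $\wedge^{r'}\mc{E}\rest_A \isom \Dsum_{|I|=r'}\OO_{\sum_{i\in I}\alpha_i}$ is again a sum of $\Pic^0(A)$ line bundles, hence flat semistable. It remains only to check the numerical condition $c_1(\mc{F}').A = 0$.

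This last point is the heart of the argument, and is where the slope hypothesis $\slope_D(\mc{F}) = c_1(\mc{F}).A.D = 0$ gets upgraded to the stronger cycle-level condition demanded by the rank-one proposition. Restricting $\mc{F}' \hookrightarrow \wedge^{r'}\mc{E}$ to a generic fiber gives an injection of the rank-one torsion-free sheaf $\mc{F}'\rest_A$ into $\Dsum_j \OO_{\beta_j}$ with $\beta_j \in \Pic^0(A)$; composing with projection to a factor yields a nonzero map $\mc{F}'\rest_A \to \OO_{\beta_j}$ for some $j$. Writing $\mc{F}'\rest_A = L \tensor I_W$ with $L$ a line bundle and $W$ of codimension two, this extends (across the codimension-two locus $W$) to a nonzero section of $L^{-1}\tensor \OO_{\beta_j}$, a line bundle of $D$-degree $-c_1(\mc{F}').A.D = 0$ on the abelian surface $A$. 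An effective line bundle $\OO_A(E)$ of $D$-degree zero on $A$ with $D$ ample must have $E=0$, so $L^{-1}\tensor\OO_{\beta_j}\isom\OO_A$ and $L\isom\OO_{\beta_j}\in\Pic^0(A)$. Thus $c_1(\mc{F}')\rest_A$ is numerically trivial, i.e. $c_1(\mc{F}').A = 0$.

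With all hypotheses verified, Proposition \ref{Rank1Reducible} applied to $\mc{F}' \hookrightarrow \wedge^{r'}\mc{E}$ shows that the spectral cover associated to $\wedge^{r'}\mc{E}$ is reducible; by Definition \ref{AbsIrr} this cover is precisely $C_{r'}$, so $\mc{E}$ is $r'$-reducible. The main obstacle is exactly the numerical upgrade of the preceding paragraph — turning the single scalar vanishing $\slope_D(\mc{F})=0$ into the full fiberwise triviality of $c_1(\mc{F})$ — which relies on the fact that on an abelian surface the only effective divisor of degree zero against an ample class is trivial. Everything else is the formal bookkeeping of exterior powers, and, just as in the proof of Proposition \ref{Rank1Reducible}, it suffices to work over a Zariski-open subset of the base, so the singular fibers cause no difficulty.
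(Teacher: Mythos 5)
Your proof follows the same route as the paper: the paper's entire argument is the two-line remark that one reduces to Proposition \ref{Rank1Reducible} by passing to $\wedge^{r'}\mc{F}\to\wedge^{r'}\mc{E}$, which is exactly your skeleton. The extra detail you supply (nonvanishing of the induced map, flat semistability of $\wedge^{r'}\mc{E}\rest_A$, and the upgrade from $c_1(\mc{F}).A.D=0$ to $c_1(\mc{F}).A=0$, which also follows directly here since $H^2(V,\Q)$ is spanned by $H$ and $A$ with $A^2=0$) is correct and fills gaps the paper leaves implicit.
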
 
\begin{proof}
The case where $\rk{\mc{F}} = 1$ follows from proposition \ref{Rank1Reducible} above.  For the case $\rk{\mc{F}} = r' < r$ reduce to the rank 1 case by considering $\wedge^{r'}\mc{F}$ 
\end{proof}

\begin{theorem} Let $\mc{E}$ be given as the Fourier-Mukai transform of a line bundle supported on a curve such that $\mc{E}$ is absolutely irreducible (see definition \ref{AbsIrr}, and let $H_0$ be a fixed ample Divisor. Write $D_k = H_0 + kA$.  Then for $k>>0$ we have that $\mc{E}$ is stable with respect $D$.
\end{theorem}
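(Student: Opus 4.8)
The plan is to follow the Friedman--Morgan--Witten argument for elliptically fibered surfaces, adapted to our three-dimensional abelian-surface fibration, with the two reducibility statements above supplying the essential input. Recall that Mumford stability on the threefold $\dV$ with respect to an ample class $D$ compares the slopes $\mu_D(\mc{F}) = (c_1(\mc{F}).D^2)/\rk{\mc{F}}$. Since $D_k = H_0 + kA$ and $A^2 = 0$, we have $D_k^2 = H_0^2 + 2k\,(H_0.A)$, so for any coherent subsheaf $\mc{F}$,
\[
	c_1(\mc{F}).D_k^2 = c_1(\mc{F}).H_0^2 + 2k\,\bigl(c_1(\mc{F}).H_0.A\bigr),
\]
where the coefficient of $k$ is exactly the degree of $\mc{F}$ along a generic fiber. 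Because $\mc{E}$ is the Fourier--Mukai transform of a line bundle supported on a spectral curve, its restriction to a generic smooth fiber $A$ is a direct sum of degree-zero line bundles, hence semistable of fiber-slope $0$; in particular $c_1(\mc{E}).H_0.A = 0$, the $k$-linear term of $\mu_{D_k}(\mc{E})$ vanishes, and $\mu_{D_k}(\mc{E}) = (c_1(\mc{E}).H_0^2)/r$ is independent of $k$.

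First I would reduce to saturated destabilizers. Suppose $\mc{E}$ failed to be $D_k$-stable for arbitrarily large $k$; for each such $k$ choose a saturated subsheaf $\mc{F}$ with $0 < r' := \rk{\mc{F}} < r$ and $\mu_{D_k}(\mc{F}) \geq \mu_{D_k}(\mc{E})$. Comparing the expansions above, the leading coefficient in $k$ forces $c_1(\mc{F}).H_0.A \geq 0$. On the other hand, restricting to a generic fiber yields an inclusion $\mc{F}\rest_A \hookrightarrow \mc{E}\rest_A$ into a semistable sheaf of slope $0$, so $c_1(\mc{F}).H_0.A = \deg(\mc{F}\rest_A) \leq 0$. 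Hence $c_1(\mc{F}).H_0.A = 0$: every would-be destabilizer has fiber-slope exactly zero.

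Now I would invoke reducibility. A subsheaf $\mc{F} \hookrightarrow \mc{E}$ with $c_1(\mc{F}).A.D = 0$ on the generic fiber is precisely the hypothesis of the reducibility proposition preceding this theorem (which reduces the rank-$r'$ case to Proposition \ref{Rank1Reducible} via $\wedge^{r'}\mc{F}$), so its existence forces $\mc{E}$ to be $r'$-reducible; since $0 < r' < r$, this contradicts the absolute irreducibility of $\mc{E}$ (Definition \ref{AbsIrr}). Thus no fiber-slope-zero destabilizer exists at all. It remains only to dispose of destabilizers of strictly negative fiber-degree; for these the $k$-linear coefficient $c_1(\mc{F}).H_0.A$ is negative, so $\mu_{D_k}(\mc{F}) \to -\infty$ while $\mu_{D_k}(\mc{E})$ stays constant, and such $\mc{F}$ cannot destabilize once $k$ is large.

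The step I expect to be the main obstacle is making this last reduction \emph{uniform} in $k$: producing a single $k_0$ with $\mc{E}$ being $D_k$-stable for all $k \geq k_0$, rather than merely for one arbitrarily large $k$. This needs a boundedness statement, namely that the fiber-degrees $c_1(\mc{F}).H_0.A$ of saturated destabilizing subsheaves range over a finite set of integers. I would obtain this from the semistability of $\mc{E}\rest_A$ together with the saturation of $\mc{F}$ (which bounds the degree of the torsion-free quotient $\mc{E}/\mc{F}$ along the fiber from above, hence $\deg(\mc{F}\rest_A)$ from below). With the fiber-degree confined to finitely many values, the value $0$ is excluded outright by the irreducibility argument above, and each of the remaining finitely many negative values imposes an explicit lower bound on $k$ past which it cannot destabilize; taking the maximum of these bounds gives the desired $k_0$, exactly as in the elliptic-surface case of \cite{FriedmanMorganWitten}.
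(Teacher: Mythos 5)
Your overall strategy is the same as the paper's (both follow the Friedman--Morgan--Witten template): restrict to a generic fiber to force the fiber-degree of any subsheaf to be $\leq 0$, use absolute irreducibility together with Proposition \ref{Rank1Reducible} and the proposition following it to exclude fiber-degree $0$, and then let the $k$-linear term of $\slope_{D_k}$ drive every subsheaf's slope down. The gap is in the uniformity step, which you correctly identify as the main obstacle but then propose to close with the wrong boundedness statement. Knowing that the fiber-degrees $c_1(\mc{F}).H_0.A$ of saturated destabilizers lie in a finite set of negative integers does not produce a single $k_0$: for a fixed fiber-degree $d<0$ and rank $r'$ one has $\slope_{D_k}(\mc{F}) = c_1(\mc{F}).H_0^2/r' + 2kd/r'$, and the threshold in $k$ past which this drops below $\slope_{D_k}(\mc{E})$ depends on the constant term $c_1(\mc{F}).H_0^2/r'$, which is a priori unbounded above as $\mc{F}$ ranges over subsheaves with that fixed fiber-degree. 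Your saturation argument controls the coefficient of $k$, not this constant term. (Relatedly, your intermediate claim that ``the leading coefficient in $k$ forces $c_1(\mc{F}).H_0.A \geq 0$'' is not justified for a destabilizer at a single value of $k$ without exactly such a bound on the constant term; luckily that direction is never needed, since restriction to the fiber already gives $\leq 0$ and irreducibility upgrades it to $\leq -1$.)

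What is actually required --- and what the paper isolates as a separate lemma --- is a uniform upper bound $c_1(\mc{F}).H_0^i.A^{2-i}/\rk{\mc{F}} \leq a$ valid for \emph{all} subsheaves $\mc{F} \subset \mc{E}$, with $a$ depending only on $\mc{E}$, $H_0$ and $A$. The paper obtains it by filtering $\mc{E}$ (and, for $\rk{\mc{F}} = r' > 1$, the exterior power $\wedge^{r'}\mc{E}$) with torsion-free rank-one quotients $L_j \tensor I_{Z_j}$: a rank-one subsheaf must map nontrivially into one of these, hence has the form $L_j \tensor \OO(-K) \tensor I_X$ with $K$ effective, and Proposition \ref{EffInt} then gives $c_1(\mc{F}).H_0^i.A^{2-i} \leq c_1(L_j).H_0^i.A^{2-i}$, a bound involving only the finitely many $L_j$. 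With this lemma your contradiction setup becomes unnecessary: absolute irreducibility gives $c_1(\mc{F}).H_0.A \leq -1$ outright for every subsheaf of rank $0 < r' < r$, so $\slope_{D_k}(\mc{F}) \leq a - 2k/(r-1)$, while $\slope_{D_k}(\mc{E})$ is independent of $k$; a single $k_0$ then works for all $\mc{F}$ simultaneously, which is the uniform conclusion the theorem asserts.
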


\begin{proof}
Let $\mathcal{F}$ be a subsheaf of $\mathcal{E}$ with the $0 < \mbox{rank}(\mathcal{F}) < \mathcal{E}$.  Then since $\mc{F}$ generically injective we have $\mc{F}\rest_A$ injective into $\mc{E}\rest_A$ and since the later is semistable we have that.
\begin{equation*}
	\slope_D(\mc{F}\rest_A) \leq \slope_D(\mc{E}\rest_A) = 0
\end{equation*}
From which we get that $D.c_1(\mc{F}).A \leq 0$ and  $c_1(\mc{F})$ is not an effective divisor. By absolute irreducibility we have a strict inequality and we note that $\slope_D(\mc{F}\rest{A}) = \frac{D.c_1(\mc{F}).A}{\rk{\mc{F}}}$ is strictly negative number bounded above by $\frac{-1}{n-1}$.  We'll need the following lemma:

\begin{lemma}
	There is a constant $a$, depending only on $\mc{E}$, $H_0$ and $A$ such that
	\begin{equation*}
		\frac{c_1(\mc{F}).H_0^i.A^{2-i}}{\mbox{rank}(\mc{F})} \leq a
	\end{equation*}
\end{lemma}
for all subsheaves $\mc{F} \subset \mc{E}$ and $i=0,1,2$.

\begin{proof}
	For $i = 0$ the above expression is bounded by $0$ since $A^2=0$.

	For $i=0,1$ consider the following. There is a filtration of $\mc{E}$
	\begin{equation*}
		0 \subset \mc{E}_0 \subset ... \subset \mc{E}_{n-1}=\mc{E}
	\end{equation*}
	such that successive quotients are torsion free rank 1.  Hence the successive quotients are of the form $L_i \tensor I_{Z_i}$ here $L$ is a line bundle and $I_Z$ is an ideal sheaf of a codimension 2 (or greater) subscheme of V.

	First we consider the case that rank$(\mc{F}) =1$.  Since $0 \rightarrow \mc{F} \rightarrow \mc{E}$ we have that for some $i$ there is a nonzero map $\mc{F} \rightarrow L_i \tensor I_Z$, hence we can write $\mc{F}$  as $L_i \tensor \OO(-K) \tensor I_X$ for some effective divisor $K$ and subscheme X of codimension 2 or greater.  Then
	\begin{equation*}
		c_1(\mc{F}).H_0^i.D^{2-i} \leq c_1(L_j).H_0^i.D^{2-i}
	\end{equation*}
 	and we get a bound independent of ${\mc{F}}$.

	Consider now the case that rank$(\mc{F})=r$.  Then $det(\mc{F}) = \wedge^r \mc{F}$ and we have a nonzero map $\wedge^r \mc{F} \rightarrow \wedge^r \mc{E}$.  We proceed as in the previous case by consider a filtration of $\wedge^r \mc{E}$ such that the quotients are again rank 1 sheaves.
\end{proof}

Continuing with the main theorem with have that by the previous lemma
\begin{eqnarray*}
	\slope_{D}(\mc{F}) & = & \frac{ c_1(\mc{F}).D^2 }{ \rk{\mc{F}} } 		\\
			& = & \frac{ c_1(\mc{F}).(H_0 + kA).(H_0 + kA) }{ \rk{\mc{F}} }	\\
			& \leq & a - \frac{2k}{n-1}						
\end{eqnarray*}
Recalling that $c_1 (V)$ is supported just on the fibers $A$ we have that
\begin{eqnarray*}
	\slope_{D}(\mc{E}) & = & \frac{ c_1(\mc{E}).D^2 }{ \rk{\mc{E}} } 	\\
			& = & \frac{ c_1(\mc{E}).(H_0^2 + k H.A) }{n} 	\\
			& = & \frac{ c_1(\mc{E}).H_0^2 }{n}		\\
			& \leq & a 
\end{eqnarray*}
Hence for some sufficiently large k we have 
\begin{equation*}
	\slope_D(\mc{F}) < \slope_D(\mc{E})
\end{equation*}
for all choices of subsheaves $\mc{F}$.  Hence $\mc{E}$ is slope stable.
\end{proof}

\section{Future Directions}\label{FutureDirections}

% There are several obvious ways to extend to work done in this project.  First we would like to either find an example of a vector bundle meeting the physical requirements or prove that such a bundle is impossible on V.  We should try some simple extensions of the approach in this paper, for example we should consider elementary transformations supported on more then one abelian surface.
% 
% Our primary obstacle is determining when a flat vector $S$ surjects onto a vector bundle Q constructed on the abelian surface. Ideally one would like a theorem specifying when this happens in terms of easily calculable quantities, such as the cohomologies of $L$, $N$, and various combinations (eg. $L\tensor N$ and $L \tensor N^\ast$).  Such a theorem would allow for an automated (computer assisted) search for admissible bundles, providing potentially a large number of examples.
% 
% Once we complete the classification of physically viable bundles we can make further calculations.  It becomes a straight forward computation to determine relevant physical quantities testing our model. For instance, the tri-linear couplings will give predictions that can be matched with Yukawa couplings in the standard model.  We would hope these Calabi-Yaus can improve on the calculations in \cite{BouchardDonagiCvetic} since our large fundamental group allows more careful fine tuning of the texture parameter.

There are several related families of abelian surface fibered Calabi-Yaus to which the techniques and results developed here can be applied providing a rich bestiary of examples.  One can look at intermediate quotients by taking quotients of subgroups of $\Z_8 \times \Z_8$.  In \cite{GrossPopescu} they describe another family of $(1,8)$ polarized abelian surfaces related to ours by flopping the small resolution (called $V^2_{8,y}$ in that paper).  This flop has second abelian surface fibration whose fibers carry either $(2,8)$ or a $(4,4)$ polarization.  Further In \cite{BorisovHua} they describe non-commutative groups of order 64 acting on $V$ giving quotients with non-abelian fundamental groups.  One can attempt to build bundles on these as well as intermediate quotients.

There are additional families of abelian surface fibrations arising in a manner similar to how $V_{8,y}$ is defined \cite{GrossPopescu}.  In particular there are  $(1,6)$, $(1,7)$ and $(1,5)$ polarized abelian surface fibrations with Heisenberg groups acting to give interesting quotient varieties.  One could hope that bundles could be built on these directly using the spectral construction without the need to take elementary transformations (which cause the main difficulty in the cases we examine here). 

%chapter
\section{Some Facts Regarding Abelian Surfaces}
In this appendix we include some small results that are used in the main part of the discussion.  First we prove some well known result for abelian surfaces.  In the second section we give some simple results for sheaves on algebraic varieties.  

\subsection{Some Facts Regarding Abelian Surfaces}
We collect in this Appendix some facts on abelian surfaces.  These results were primarily used in the analysis that went into the companion paper \cite{BakBouchardDonagi}.  All of these facts are probably well known. Our primary references are \cite{MumfordAbelian} and \cite{BirkenhackeLange}.

%\begin{lemma} The cone of effective curves is given by $\{ (x,y,z) \in \R \;|\; xy+xz+yz \geq 0,\: x+y+z \geq 0 \}$.  For line bundles $L = \OO(x,y,z)$ with $L^2 \neq 0 $ the cohomology $H^i(A,L)$ is zero for all but one i and for that i has dimension $\frac{1}{2} L^2$
%\begin{enumerate}
%	\item $h^0(A,L) \neq 0$ if $L$ effective
%	\item $h^1(A,L) \neq 0$ if $L$ not effective and $L^2 < 0$
%	\item $h^2(A,L) \neq 0$ if $L$ not effective and $L^2 > 0$
%\end{enumerate}
 
%\end{lemma}
%\begin{proof}
%This result follow from the Vanishing Theorem \cite{MumfordAbelian} and standard results for the cohomology of line bundles on abelian surfaces (see \cite{BirkenhackeLange} Theorem 3.3.3).
%\end{proof}

% \begin{proposition} The moduli of $(1,8)$-polarizations on $E\times E$ is finite and for a choice of fundamental domain consists of
% \end{proposition}
% \begin{proof}
% There is an $SL(2,\Z)$ action on $E \times E$ given by
% \begin{eqnarray*}
% 	E \times E & \longrightarrow & E\times E \\
% 	(x,y) & \longmapsto & (ax+cy,bx+dy)
% \end{eqnarray*}
% where $ad-bc=1$.  This action gives an action on 
% \end{proof}

\begin{proposition}\label{SL2ZTRANS}  The $SL(2,\Z)$ action on $A = E \times E$ given by
\begin{eqnarray*}
	E \times E & \longrightarrow & E\times E \\
	(x,y) & \longmapsto & (ax+cy,bx+dy)
\end{eqnarray*}
acts transitively on the generators of the effective cone of A.
\end{proposition}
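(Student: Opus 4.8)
The plan is to make the effective cone of $A = E \times E$ completely explicit, identify its extremal rays (the ``generators'') with a set on which the $SL(2,\Z)$-action is classical, and then invoke transitivity there.

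First I would recall the structure of the effective cone. Since $K_A = 0$ and $A$ carries no rational curves, adjunction gives $C^2 = 2g(C) - 2 \geq 0$ for every irreducible curve $C$, so $A$ has no curves of negative self-intersection. Consequently the effective cone agrees with the closure of the positive cone $\{ D : D^2 \geq 0,\ D \cdot H > 0 \}$ for a fixed ample $H$ --- a round (quadratic) cone --- whose extremal rays lie on the isotropic locus $D^2 = 0$ (see \cite{BirkenhackeLange}). The generators realized by actual effective curves are therefore the classes of elliptic curves, so it suffices to describe the $SL(2,\Z)$-action on the primitive isotropic effective classes.

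Next I would identify these generators with one-dimensional abelian subvarieties. An effective divisor with $D^2 = 0$ on an abelian surface is a positive multiple of an elliptic curve that translates an elliptic subvariety, so the primitive generators are precisely the classes of the elliptic subvarieties of $A$. Assuming $\mathrm{End}(E) = \Z$, every homomorphism $E \to E \times E$ has the form $t \mapsto (mt, nt)$, so these subvarieties are exactly the images $C_{m,n}$ with $(m,n) \in \Z^2$ primitive, parametrized by $\proj{1}(\Q)$. A short intersection computation (using $F_1^2 = F_2^2 = \Delta^2 = 0$ and $F_1 \cdot F_2 = F_1 \cdot \Delta = F_2 \cdot \Delta = 1$, where $F_1, F_2$ are the two factor classes and $\Delta = C_{1,1}$) confirms $C_{m,n}^2 = 0$ and expresses $[C_{m,n}]$ in the basis $\{F_1, F_2, \Delta\}$ of $NS(A)$, so these are genuine integral generators.

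Finally, evaluating the automorphism on the parametrization shows that $(x,y) \mapsto (ax+cy, bx+dy)$ carries $C_{m,n}$ to $C_{am+cn,\, bm+dn}$; that is, the induced action on generators is the standard linear action of $SL(2,\Z)$ on primitive vectors of $\Z^2$ (equivalently the Möbius action on $\proj{1}(\Q)$). Since any primitive vector can be completed to a $\Z$-basis, it is the first column of some matrix in $SL(2,\Z)$, so the orbit of $(1,0)$ is all primitive vectors and the action is transitive, which is the claim. The step requiring the most care is the middle one: verifying that the extremal effective rays are exactly the $C_{m,n}$ and that no further generators appear --- in particular that $\mathrm{End}(E) = \Z$, since extra endomorphisms in the CM case would produce elliptic subvarieties lying outside a single $SL(2,\Z)$-orbit. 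Everything else is a routine verification.
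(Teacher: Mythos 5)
Your proof is correct, and while it shares the paper's basic strategy of identifying the cone generators with the images of the slope maps $t \mapsto (mt,nt)$, it executes the two halves of the argument quite differently. For the identification step, the paper simply asserts that the generators are the shortest integral vectors on the conic $xy+xz+yz=0$ in the basis $\{E,F,\Delta\}$ of $NS(A)$; you instead derive this from adjunction ($K_A=0$ plus the absence of rational curves forces $C^2\geq 0$), the resulting roundness of the effective cone, and the classification of effective isotropic classes as multiples of elliptic subvarieties --- a cleaner justification, and one that makes explicit the hypothesis $\mathrm{End}(E)=\Z$, which the paper needs too (otherwise $NS(A)$ has rank $4$ and extra elliptic subvarieties appear) but never states. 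For the transitivity step, the paper computes $[E_{a,b}] = a(a-b)E + b(b-a)F + ab\Delta$ by a lattice calculation, writes the induced action as an explicit $3\times 3$ matrix, parametrizes the rational points of the conic by lines through the origin, and matches the orbit of $E$ against that parametrization; you bypass all of this by observing that the automorphism carries $C_{m,n}$ to $C_{am+cn,\,bm+dn}$, so the induced action on generators is the standard $SL(2,\Z)$-action on primitive vectors of $\Z^2$, whose transitivity is classical (complete any primitive vector to a basis). Your route is shorter and more conceptual; the paper's buys, as a by-product, the explicit matrix of the action on $NS(A)$ and the exact coefficients of $[E_{a,b}]$, which could be reused elsewhere.
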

\begin{proof}
For $a,b$ relatively prime we have the slope map $m_{a,b}: E \rightarrow E \times E$ taking $x \mapsto (ax,bx)$.  Write $E_{a,b}$ for the image of this map.  We want to describe $E_{a,b}$ as a linear combination of $E$,$F$, and $\Delta$.  Write $e_i$, $i=1,...,4$ a basis of the lattice $\Z^4 \hookrightarrow \C^2$.  Where $e_1, e_2$ span the lattice of the first factor of $A=E\times E$ and $e_3, e_4$ the second factor.  In this basis we can represent E, F and $\Delta$ as
\begin{eqnarray*}
	E=e_1 \wedge e_2 & F = e_3 \wedge e_4 & \Delta = (e_1 + e_3) \wedge (e_2 + e_4)
\end{eqnarray*}
Now, $E_{a,b} = \{ (ax,bx) \}$ and lifting to $\C^4$ we can describe it as the plane spanned by 
\begin{eqnarray*}
	\lambda_1 = \left( \begin{array}{c}  a \\ 0 \\ b \\ 0  \end{array} \right) &
	\lambda_2 = \left( \begin{array}{c}  0 \\ a \\ 0 \\ b  \end{array} \right)
\end{eqnarray*}
where $\lambda_1$ and $\lambda_2$ are in the lattice.  So $E_{a,b}$ is represented by
\begin{eqnarray*}
	\lambda_1 \wedge \lambda_2 & = & (a e_1 + b e_3) \wedge (a e_2 + b e_3)   \\
		& = & a^2 e_1 \wedge e_2 + ab(e_1 \wedge e_4 + e_3 \wedge e_2) + b^2 e_3 \wedge e_4
\end{eqnarray*}
So if $E_{a,b} = xE + yF + x\Delta$ we get
\begin{center}
\begin{tabular}{lllll}
	$E_{a,b} E $		& $=$ & 	$b^2$	 	& $=$ & $y + z$\\
	$E_{a,b}.F$ 		& $=$ & 	$a^2$ 	& $=$ & $x + z$\\
	$E_{a,b}.\Delta$ 	& $=$ & 	$(a-b)^2$ & $=$ & $x + y$.
\end{tabular}
\end{center}
Solving this linear system we find $E_{a,b} = a(a-b) E + b(b-a) F + ab \Delta$.  But $E_{a,b}$ is just the image of $E$, the first factor in $E\times E$ under our $SL(2,\Z )$ action.  Making a similar calculation for $F$ and $\Delta$ we find that the action can be written as
\begin{displaymath}
	\left( \begin{array}{ccc}
	a(a-b)	& 	c(c-d)	&	(a+c)(a+c-b-d)	\\
	b(b-a)	&	d(d-c)	&	(b+d)(b+d-a-c)	\\
	ab		&	cd		&	(a+c)(b+d)		\\ 
	\end{array} \right)
\end{displaymath}

Now we want to write down the generators of the effective cone and check that the action is transitive.  Recall that the effective cone generators is the shortest vectors vectors $(l,m,n) \in \Z^2$ such that $lm + ln + mn = 0$  These are in 1-1 correspondence with the rational points of the conic $\{xy+xz+yz=0\}\subset \proj{2}$.  Looking at the $z \neq 0$ patch we have $\{(X,Y) | XY +X+Y = 0\} = C$.  Consider the line through $(0,0) \in C$ with rational slope, $Y = tX$.  It intersects C at one other point given by substituting $tX$ for $Y$ in the above equation.  We find $X=0$ or $X=-(1+ \frac{1}{t})$   Using the equation for C we $Y=t+1$.  This gives a parameterization of the rational points in $\proj{2}$ as
\begin{displaymath}
	\{[(-(1+\frac{1}{t}):-(t+1)):1] \; | \; t \in \Q\backslash 0 \} \union \{[1:0:0],[0:1:0]\} 
\end{displaymath}
Where $[1:0:0]$ and $[0:1:0]$ are the images of $E$ and $F$ respectively.

The orbit of $E$ is given by $E_{a,b} = a(a-b) E + b(b-a)F + ab \Delta$.  Assuming $ab \neq 0$ this corresponds to a point of the form
$[a(a-b):b(b-a):ab] = [\frac{a}{b} - 1: \frac{b}{a} -1: 1]$ and taking $\frac{a}{b} = -\frac{1}{t}$ we get precisely the rational points of C.  Furthermore taking $b=1, a=0$ we find $E_{0,1} = F$.  Hence the entire effective cone is in the orbit of E.
\end{proof}

\begin{lemma}\label{MinIntersection}Let A be an abelian surface. Then if $D$ and $H$ are effective divisors such that $D.D = d$ and $H.H = h$ then
\begin{eqnarray}
	D.H \geq \sqrt{dh}
\end{eqnarray}
\begin{proof}
	First recall that for an abelian surface if D is effective then $D^2 \geq 0$.  By the Hodge Index theorem the intersection form is lorentzian, i.e. there is a choice of basis so that the intersection form is of the form $diag(1,-1,...,-1)$. But for vectors vectors of positive norm in a lorentzian metric there is a reverse Schwarz inequality, namely %doc/reverse_schwarz_inequality.htm
	\begin{displaymath}
		(H.D)^2 \geq (H.H)(D.D).
	\end{displaymath}
	The result follows immediatly.

% For the case of picard number two we can assume that $D$ and $H$ are integral generators. In this case observe that the hodge index theorem says that the intersection matrix
% 	\begin{eqnarray}
% 		\left( \begin{array}{cc}
% 			D.D & D.H \\
% 			H.D & H.H \end{array} \right)
% 	\end{eqnarray}
% 	must not be negative definite, i.e. that the determinant is less then zero.  The result follows immediatly.
% 
% 	For the case of picard number three we have that $A \isom E \times E$ (is this true?) with three classes as in the previous lemma.  Writing $H = aE + bF +C\Delta$ and $D = xE +yF +z\Delta$ we would like to find the minima of the intersection on $H$ and $D$, $i(H,D) = ay + az + bx + bz +cx + cy$ subject to the constraints that $2(ab + ac + bc) = h$ and $2(xy+xz+yz) =d$.  Solving this at a lagrange multiplier problem we find the result.
\end{proof}

\end{lemma}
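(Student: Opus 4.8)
The plan is to recognize the desired bound as a reverse Cauchy--Schwarz inequality for the Lorentzian intersection pairing, and then to supply the two geometric inputs about abelian surfaces that make such an inequality applicable. Throughout I would work on the real N\'eron--Severi space $NS(A)_{\R}$ equipped with the intersection form.

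First I would establish that the relevant classes have non-negative self-intersection and pair non-negatively. Since $K_A = 0$, adjunction gives $C^2 = 2p_a(C) - 2$ for any irreducible curve $C$, and because an abelian surface carries no rational curves we have $p_a(C) \geq 1$, hence $C^2 \geq 0$. It follows that an abelian surface has no negative curves, so every effective divisor is nef and any two effective divisors meet non-negatively. In particular $d = D^2 \geq 0$, $h = H^2 \geq 0$, and $D \cdot H \geq 0$; this last fact is exactly what will let me upgrade a bound on $(D \cdot H)^2$ to a bound on $D \cdot H$ itself.

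Second I would invoke the Hodge Index Theorem, which gives the intersection form on $NS(A)_{\R}$ signature $(1,\rho-1)$; in a diagonalizing basis it becomes $\mathrm{diag}(1,-1,\dots,-1)$. The core is then the reverse Schwarz inequality: for vectors $u,v$ with $u^2 \geq 0$ and $v^2 \geq 0$ one has $(u \cdot v)^2 \geq (u^2)(v^2)$. I would prove this by restricting the form to the plane spanned by $D$ and $H$, the linearly dependent case being trivial. This plane cannot be positive definite, since a signature $(1,\rho-1)$ form admits no two-dimensional positive subspace, so the restricted form has signature $(1,1)$ or is degenerate; writing $u$ and $v$ in a hyperbolic basis $e_+,e_-$ of that plane, the quantity $(u\cdot v)^2 - (u^2)(v^2)$ collapses to a perfect square, yielding the inequality. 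Combining this with $D \cdot H \geq 0$ gives $D \cdot H = \sqrt{(D \cdot H)^2} \geq \sqrt{dh}$.

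The main obstacle is conceptual rather than computational: the reverse inequality genuinely requires both self-intersections to be non-negative and the two classes to lie in the closure of a single component of the positive cone, where the ordinary Cauchy--Schwarz inequality reverses. The special geometry of abelian surfaces --- absence of negative curves, so that effective forces nef with $D^2 \geq 0$ --- is precisely what secures both hypotheses and simultaneously fixes the sign of $D \cdot H$. Once these are in place, the Lorentzian computation is routine.
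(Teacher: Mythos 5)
Your proof follows the same route as the paper's: $D^2 \geq 0$ for effective divisors on an abelian surface, the Hodge Index Theorem to put the intersection form in Lorentzian form, and the reverse Cauchy--Schwarz inequality for classes of non-negative self-intersection. You are somewhat more careful than the paper in one respect --- you explicitly justify $D.H \geq 0$ (via the absence of negative curves, so effective divisors are nef), which is genuinely needed to pass from $(D.H)^2 \geq dh$ to $D.H \geq \sqrt{dh}$ and which the paper leaves implicit in ``the result follows immediately.''
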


\begin{lemma} For a flat bundle $S$ and line bundle $Q$ on an abelian surface A.  Then a necessary condition for $Hom(S,Q) \neq 0$ is that $H^0(Q \tensor \OO_{\alpha}) \neq 0$ some $\OO_{\alpha}$. 
\end{lemma}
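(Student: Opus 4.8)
The plan is to characterize the condition $\Hom(S,Q) \neq 0$ in terms of the structure of the flat bundle $S$ and then apply the Fourier-Mukai transform to reduce the question to the existence of a global section of a twist of $Q$. The key observation is that a flat bundle $S$ on an abelian surface $A$ (i.e. one whose restriction to $A$ is flat, arising as $\mc{E}\rest_A$ in the earlier constructions) decomposes as a direct sum of degree-zero line bundles, $S \isom \Dsum_i \OO_{\beta_i}$ with $\OO_{\beta_i} \in \Pic^0(A)$, exactly as in the semistability analysis of Proposition \ref{Rank1Reducible}. Under this decomposition a homomorphism $S \to Q$ is the same as a collection of homomorphisms $\OO_{\beta_i} \to Q$, so $\Hom(S,Q) \neq 0$ if and only if $\Hom(\OO_{\beta_i}, Q) \neq 0$ for at least one summand.

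First I would reduce $\Hom(\OO_{\beta_i}, Q)$ to a cohomology group: tensoring by $\OO_{\beta_i}^{-1}$ gives
\begin{equation*}
	\Hom(\OO_{\beta_i}, Q) \isom \Hom(\OO_A, Q \tensor \OO_{\beta_i}^{-1}) \isom H^0(A, Q \tensor \OO_{\beta_i}^{-1}).
\end{equation*}
Since $\OO_{\beta_i}^{-1}$ is again a degree-zero line bundle, setting $\OO_\alpha = \OO_{\beta_i}^{-1} \in \Pic^0(A)$ we see that a nonzero homomorphism exists precisely when $H^0(A, Q \tensor \OO_\alpha) \neq 0$ for some $\OO_\alpha \in \Pic^0(A)$. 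This immediately yields the necessary condition as stated. The logical direction to prove is that $\Hom(S,Q) \neq 0$ forces such an $\alpha$ to exist, and the contrapositive is cleanest: if $H^0(Q \tensor \OO_\alpha) = 0$ for \emph{all} $\OO_\alpha \in \Pic^0(A)$, then in particular each summand contributes $H^0(A, Q \tensor \OO_{\beta_i}^{-1}) = 0$, so $\Hom(S,Q) = \Dsum_i H^0(A, Q \tensor \OO_{\beta_i}^{-1}) = 0$.

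The main obstacle is justifying the splitting of the flat bundle $S$ into degree-zero line bundles, which is the one structural input beyond formal manipulation. In the setting of this paper $S$ arises as the restriction of a Fourier-Mukai-constructed bundle to a generic smooth fiber, where flat semistability of degree zero on an abelian surface gives such a decomposition (up to Jordan-H\"older filtration) by the same reasoning invoked for $\mc{E}\rest_A \isom \OO_{\alpha_1} \dsum \cdots \dsum \OO_{\alpha_r}$ in Proposition \ref{Rank1Reducible}. If one wants to avoid assuming a literal direct-sum splitting and work only with a filtration whose quotients are degree-zero line bundles, the argument still goes through: a nonzero map out of $S$ induces, via the filtration, a nonzero map from some graded piece $\OO_{\beta_i}$, again producing a nonzero element of $H^0(A, Q \tensor \OO_{\beta_i}^{-1})$. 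I would therefore phrase the proof to rely only on the existence of a degree-zero filtration rather than a splitting, making the hypothesis on $S$ as weak as possible while still delivering the stated necessary condition.
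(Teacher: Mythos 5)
Your final (filtration-based) argument is correct and is essentially the paper's proof: the paper writes $S$ as a direct sum of iterated extensions of $\OO_A$ twisted by flat line bundles and inducts on the extension length, which is the same reduction as your ``take the first graded piece on which the map is nonzero'' step, followed by the identification $\Hom(\OO_\beta,Q)\isom H^0(Q\tensor\OO_\beta^{-1})$. Do note, however, that your fallback to a filtration is not optional but mandatory: the initial claim that a flat bundle splits as $\Dsum_i\OO_{\beta_i}$ is false in general, since $\mathrm{Ext}^1(\OO_A,\OO_A)=H^1(\OO_A)\neq 0$ (equivalently, representations of $\pi_1(A)=\Z^4$ by commuting matrices need not be diagonalizable, only simultaneously triangularizable), which is exactly why the paper phrases the structure theorem in terms of successive extensions rather than a direct sum of line bundles.
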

\begin{proof}
 Write $S \isom \Dsum_j S_j^{k(j)}$ where $S_j^{k(j)}$ is a $k(j)$ level extension of $\OO_A$, i.e. there is a sequence of extensions (up to tensoring with some flat line bundle)
\begin{eqnarray*}
	& \ses{ \OO }{ S_j^{k} }{ S_j^{k-1} }	&	\\
 	& \ses{ \OO }{ S_j^{k-1} }{ S_j^{k-2} }	&	\\
	& \vdots								&	\\
	& \ses{\OO}{S_j^1}{\OO}					&
\end{eqnarray*}
Now the obvious induction argument will show that $Hom(S,Q) \neq 0$ if and only if $Hom(\OO,Q) = \neq 0$ (again up to tensoring with some $\OO_{\alpha}$.
\end{proof}

\end{document}